\definecolor{myblue}{rgb}{0.09,0.32,0.44} 
\newtheorem{thm}{Theorem}
\newtheorem{lemma}[thm]{Lemma}
\newtheorem{prop}[thm]{Proposition}
\newtheorem{claim}[thm]{Claim}
\theoremstyle{definition}
\newtheorem*{remark}{Remark}
\newcommand{\Aut}{\mathrm{Aut}}
\newcommand{\cA}{\mathcal{A}}
\newcommand{\cG}{\mathcal{G}}
\newcommand{\cH}{\mathcal{H}}
\newcommand{\cJ}{\mathcal{J}}
\newcommand{\cL}{\mathcal{L}}
\newcommand{\cP}{\mathcal{P}}
\newcommand{\cQ}{\mathcal{Q}}
\newcommand{\cX}{\mathcal{X}}
\newcommand{\cY}{\mathcal{Y}}
\newcommand{\Reals}{\mathbb{R}}
\newcommand{\DKL}{D_{KL}}
\newcommand{\Err}{\mathcal{E}}
\newcommand{\Disc}{D}
\newcommand{\Ber}{\mathrm{Ber}}
\newcommand{\Bin}{\mathrm{Bin}}
\newcommand{\Ex}{\mathbb{E}}
\newcommand{\Var}{\mathrm{Var}}
\newcommand{\eps}{\varepsilon}
\newcommand{\gain}{G}
\newcommand{\loss}{L}
\newcommand{\Yq}{{Y^{(q)}}}
\newcommand{\one}{\mathbf{1}}
\newcommand{\br}[1]{\llbracket{#1}\rrbracket}
\newcommand{\sumone}{I}
\newcommand{\sumtwo}{II}
\def\dtv{d_\textrm{TV}}
\renewcommand{\le}{\leqslant}
\renewcommand{\ge}{\geqslant}
\renewcommand{\Pr}{\mathbb{P}}
\title[Lower tails via relative entropy]{Lower tails via relative entropy}
\author{Gady Kozma}
\address{Department of Mathematics, The Weizmann Institute of Science, Rehovot 7610001, Israel}
\email{gady.kozma@weizmann.ac.il}
\author{Wojciech Samotij}
\address{School of Mathematical Sciences, Tel Aviv University, Tel Aviv 6997801, Israel}
\email{samotij@tauex.tau.ac.il}
\thanks{This research was supported in part by the Jesselson Foundation and by Paul and Tina Gardner (GK) and by the Israel Science Foundation grant 1145/18 (WS)}
\begin{document}

\begin{abstract}
  We show that the naive mean-field approximation correctly predicts the leading term of the logarithmic lower tail probabilities for the number of copies of a given subgraph in $G(n,p)$ and of arithmetic progressions of a given length in random subsets of the integers in the entire range of densities where the mean-field approximation is viable.

    Our main technical result provides sufficient conditions on the maximum degrees of a uniform hypergraph $\cH$ that guarantee that the logarithmic lower tail probabilities for the number of edges induced by a binomial random subset of the vertices of $\cH$ can be well-approximated by considering only product distributions. This may be interpreted as a weak, probabilistic version of the hypergraph container lemma that is applicable to all sparser-than-average (and not only independent) sets.
\end{abstract}

\maketitle

\section{Introduction}

This paper is concerned with the phenomenon that, in many cases, conditioning on an atypical event leads to a mixture of product measures.  An emblematic is the family of $n$-vertex graphs with no triangles.  It is clear that if one divides $\br{n} \coloneqq \{1,\dotsc,n\}$ into two parts and takes only edges with one endpoint in each part, the resulting graph has no triangles.  The classical result of Erd\H{o}s, Kleitman, and Rothschild~\cite{ErdKleRot76} states that the vast majority of triangle-free graphs have such simple structure.  In other words, if we \emph{condition} the random graph $G(n, \frac{1}{2})$ to have no triangles, the resulting measure can be approximated by the following process:  First, choose a random partition of the vertices into two parts (according to a measure that strongly favours partitions into approximately equal parts).  Then, choose the edges randomly and independently, with edges between the parts having probability $\frac{1}{2}$ and edges inside the parts having probability~$0$.  Since, conditioned on the partition, the measure becomes a product measure, the overall process is called a mixture of product measures.

The aim of this work is to establish sufficient conditions for such a phenomenon to occur in the context of large deviations for subgraph counts in the binomial random graph $G(n,p)$.  Here, the seminal work of Chatterjee and Varadhan~\cite{ChaVar11} has clarified that there are in fact two independent steps involved.  The first is to show that the distribution of the random graph conditioned on a tail event can be described by a (small) mixture of product measures.  The second is to describe the relevant measures, which, as it turns out, are those among all product measures (essentially) supported on the relevant tail event that have the least entropic cost.  The main result of~\cite{ChaVar11} completes the first of these two steps and can be summarised\footnote{The setting of~\cite{ChaVar11} is more general, but let us not state the full generality of that paper here.} as follows: Denote by $X_n$ the number of copies of a given graph in the binomial random graph $G(n,p)$. If the edge probability $p$ is fixed and $n$ tends to infinity, then
\begin{equation}
  \label{eq:ChaVar-LDP}
  - \log \Pr\big(X_n\ge(1+\delta)\Ex[X_n]\big) = (1+o(1)) \cdot \Phi_{n, p}(\delta),
\end{equation}
where  $\Phi_{n, p}(\delta)$ is the least entropic cost of a product measure supported on the upper tail event (we will give a formal definition below); the analogous result holds for the lower tail. As for the second step, the problem of calculating $\Phi_{n,p}(\delta)$ turned out to be very difficult. Even in the seemingly simple case of triangle counts, only partial results are known~\cite{LubZha15, Zha17}. \emph{In this paper, we address only the first step, namely, obtaining an identity akin to~\eqref{eq:ChaVar-LDP}.}

A substantial drawback of the approach taken by~\cite{ChaVar11}, which is based on Szemer\'edi's regularity lemma, is that it does not extend to sparse random graphs.  (One may instead use the so-called weak regularity lemma of Frieze and Kannan~\cite{FriKan99}, but this allows one to extend~\eqref{eq:ChaVar-LDP} only to the regime $p \ge (\log n)^{-c}$, for some small positive constant $c$, see~\cite{LubZha17}.) This was first rectified by the breakthrough work of Chatterjee and Dembo~\cite{ChaDem16}, who developed a general technique for computing large deviation probabilities of nonlinear functions of independent Bernoulli random variables, such as subgraph counts in $G(n,p)$. In the context of subgraph counts in $G(n,p)$, the general result of~\cite{ChaDem16} implies that~\eqref{eq:ChaVar-LDP} continues to hold as long as $p \ge n^{-\alpha}$ for some $\alpha > 0$ that depends only on the graph whose copies are counted.

The paper of Chatterjee and Dembo inspired a series of further developments. Their general technique was further simplified and strengthened by Eldan~\cite{Eld18}.  In the context of upper tails for subgraph counts in $G(n,p)$, the range of validity of the approximation~\eqref{eq:ChaVar-LDP} was further extended by the works of Augeri~\cite{Aug20} (for cycles), of Cook and Dembo~\cite{CooDem20} (for arbitrary graphs), and of Cook, Dembo, and Pham~\cite{CooDemPha21} (for arbitrary graphs and, more generally, arbitrary uniform hypergraphs). The expression $\Phi_{n, p}(\delta)$ in the right-hand side of~\eqref{eq:ChaVar-LDP} was computed in the range $n^{-1/\Delta} \ll p \ll 1$, where $\Delta$ is the maximum degree of the graph for cliques~\cite{LubZha17} and, subsequently, for arbitrary subgraphs~\cite{BhaGanLubZha17}. A very different, combinatorial technique for computing upper tail probabilities of polynomials of independent Bernoulli random variables was recently developed by Harel, Mousset, and Samotij~\cite{HarMouSam19}.  This technique was used to resolve the upper tail problem completely for cliques~\cite{HarMouSam19} and, subsequently, for all regular graphs~\cite{BasBas19}. More precisely, these works showed that the approximation~\eqref{eq:ChaVar-LDP} is valid in the entire range of densities $p$ where it was expected to hold.

Let us stress that all of the works on large deviations of subgraph counts in sparse random graphs mentioned above were primarily concerned with the upper tail. (In fact, the techniques developed in both~\cite{Aug20} and~\cite{HarMouSam19} are inapplicable to the lower tail problem.) Historically, the upper tail problem is considered to be more difficult of the two. Whereas Janson, {\L}uczak, and Ruci\'nski~\cite{JanLucRuc90} determined the logarithm of the lower tail probability up to a multiplicative constant, for every graph and all densities $p$, already in the late 1980s, the order of magnitude of the logarithm of the upper tail probability in the special case of triangle counts was determined only around ten years ago~\cite{Cha12, DeMKah12}.

In this paper, we offer a new, entropy-based approach to the large deviation problem that is particularly effective in estimating lower tails. The idea of using entropy estimates for studying nonlinear large deviations was first used in~\cite{KozMeyPelSam27} (that paper is a few years older than the current one, unlike what one might think by examining arXiv submission dates). Ultimately it stems from Avez's entropy approach to study random walks and amenability, see~\cite{Ave76}. A straightforward corollary of our main technical result is that the analogue of~\eqref{eq:ChaVar-LDP} holds for counts of arbitrary subgraphs in $G(n,p)$ in the entire range where such an approximation was expected to be valid.

\subsection{New results}
\label{sec:new-results}

We start with a special case of our result for triangles and $p=\frac12$. Of course, this case is mostly covered by~\cite{ChaVar11}, but we will get to values of $p$ not covered by the literature in Theorem~\ref{thm:graphs} below.
We first state the minimisation problem that formalises the phrase `least entropic cost' in this setting. Given a function $q \colon \binom{\br{n}}{2} \to [0,1]$, let $G(n, q)$ denote the random graph obtained by retaining each edge $e$ of $K_n$ independently with probability $q_e$. For each $t \ge 0$, define
\[
  \cQ_t \coloneqq \left\{ q \in [0,1]^{\binom{\br{n}}{2}} : \Ex\left[N_{K_3}\big(G(n,q)\big)\right] \le t \right\},
\]
where $N_{K_3}(G)$ is the number of triangles in $G$, and
\begin{equation}\label{eq:Phi_n}
  \Phi_n(t) \coloneqq \min\left\{\sum_{e \in \binom{\br{n}}{2}} \Big(q_e\log q_e + (1-q_e)\log(1-q_e)+\log 2\Big) : q \in \cQ_t\right\}.
\end{equation}
Note that $q \log q + (1-q) \log (1-q) + \log 2$ is the difference in entropies of Bernoulli random variables with success probabilities $\frac12$ and $q$.
\begin{thm}
  \label{thm:triangles}
  Let $X_n$ denote the number of triangles in $G(n, \frac{1}{2})$. For every $n$ and every $t \ge 0$,
  \begin{equation}
    \label{eq:thm-triangles}
    \log \Pr(X_n \le t) \le - \Phi_n(t + n^{23/8}) + 2n^{15/8}.
  \end{equation}
\end{thm}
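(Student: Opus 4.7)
The plan is to interpret the lower-tail probability as a relative entropy and then control it via a product-measure approximation. Let $\mu$ denote the conditional law of $G(n,\frac12)$ given $X_n \le t$; then $-\log\Pr(X_n \le t) = \DKL(\mu\|\Pr)$, so inequality~\eqref{eq:thm-triangles} is equivalent to the lower bound $\DKL(\mu\|\Pr) \ge \Phi_n(t+n^{23/8}) - 2n^{15/8}$. Set $q_e := \Pr_\mu(e \in G)$ to be the edge-marginals of $\mu$, and let $\nu_q$ denote the product measure $G(n,q)$, which shares edge-marginals with $\mu$. Because both $\Pr$ and $\nu_q$ are product measures, the Pythagorean identity
\[
  \DKL(\mu\|\Pr) = \DKL(\mu\|\nu_q) + \DKL(\nu_q\|\Pr)
\]
holds, with $\DKL(\nu_q\|\Pr) = \sum_e\bigl(q_e\log q_e + (1-q_e)\log(1-q_e) + \log 2\bigr)$, precisely the objective in~\eqref{eq:Phi_n}. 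Consequently, if $q \in \cQ_{t+n^{23/8}}$, the minimality of $\Phi_n$ would yield the theorem with room to spare.

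The obstacle is that in general $q \notin \cQ_{t+n^{23/8}}$: although $\Ex_\mu[X_n] \le t$, the product-triangle count $\Ex_{\nu_q}[X_n] = \sum_{\{a,b,c\}}q_{ab}q_{bc}q_{ca}$ differs from $\Ex_\mu[X_n]$ by a sum of pairwise and triple triangle-covariances under $\mu$, and lower-tail conditioning is exactly what produces substantial negative correlations (edges tend to organise into bipartite-like structures once triangles are suppressed). To close this gap I would invoke the main technical theorem of the paper, applied to the $3$-uniform hypergraph $\cH$ on vertex set $\binom{\br{n}}{2}$ whose hyperedges are the triangles of $K_n$. This hypergraph has $1$-degree $n-2$ and codegree $1$; plugging these parameters into the general bound should produce exactly the scales $n^{23/8}$ and $n^{15/8}$ appearing in the statement. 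The theorem then delivers an auxiliary statistic $Y = Y(G)$ of Shannon entropy $H(Y) \le 2n^{15/8}$ such that, writing $\mu_y := \mu(\cdot \mid Y=y)$ and $q^{(y)}$ for the marginals of $\mu_y$, one has $\Ex_{\nu_{q^{(y)}}}[X_n] \le \Ex_{\mu_y}[X_n] + n^{23/8}$ for every $y$, i.e.\ $q^{(y)} \in \cQ_{\Ex_{\mu_y}[X_n] + n^{23/8}}$.

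The remainder of the argument is routine bookkeeping: since $Y$ is a function of $G$, the chain-rule identity $\Ex\bigl[\DKL(\mu_Y\|\Pr)\bigr] = \DKL(\mu\|\Pr) + H(Y)$ holds, and combining the per-component Pythagorean bound $\DKL(\mu_y\|\Pr) \ge \Phi_n(\Ex_{\mu_y}[X_n]+n^{23/8})$ with the monotonicity and (a suitable) convexity-type property of $\Phi_n$, applied via Jensen to the identity $\Ex\bigl[\Ex_{\mu_Y}[X_n]\bigr] = \Ex_\mu[X_n] \le t$, yields the theorem. The main obstacle, and the content of the main technical theorem, is the construction of $Y$: one must identify a structural feature of $G$ (intuitively, a coarse record of edge-marked codegree statistics, since the covariance between two adjacent edges is controlled by the common neighbourhood of their opposite endpoints) whose revelation collapses the triangle covariances down to $\le n^{23/8}$ while costing at most $2n^{15/8}$ bits of entropy. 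Calibrating this entropy-versus-covariance trade-off so as to yield the specific exponent $15/8$ is where the delicate work must go.
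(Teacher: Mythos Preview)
Your high-level strategy---rewrite $-\log\Pr(X_n\le t)$ as a relative entropy, condition on auxiliary data to approximately decorrelate the edges, and then invoke the variational definition of $\Phi_n$---is exactly the paper's. But the execution has two real gaps. First, you defer the entire construction of the statistic $Y$ to Theorem~\ref{thm:main-hypergraphs} and assert that ``plugging these parameters into the general bound should produce exactly the scales $n^{23/8}$ and $n^{15/8}$.'' It does not. That theorem outputs a bound of the shape $(1-\eps)\Phi(\eta+\eps)-C$ with $\eps,\lambda,C$ tied together through the degree hypothesis~\eqref{eq:half}; for the triangle hypergraph, the constraint $\Delta_3(\cH)=1\le K(\lambda p)^2\cdot e(\cH)/v(\cH)$ forces $\lambda\gtrsim n^{-1/2}$, hence (via the remark after the theorem) $\eps\gtrsim n^{-1/18}$, giving a shift of order $n^{3-1/18}>n^{23/8}$. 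Nor does the theorem produce a statistic $Y$ satisfying your pointwise-in-$y$ covariance bound; you would have to rework its proof, not quote it. Second, your Jensen step requires $t\mapsto\Phi_n(t)$ to be convex, which you do not prove and which is not obvious: the constraint set $\cQ_t$ is cut out by a cubic, non-convex inequality in $q$. The paper neither proves nor uses convexity of $\Phi_n$.

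The paper's actual proof in Section~\ref{sec:triangles} is self-contained and sidesteps both issues. The auxiliary data is simply $S=(Y_f)_{f\in F}$, where $F$ is the set of all edges touching $\{n-r+1,\dotsc,n\}$ for some $r\le\sqrt n$ chosen by pigeonhole so that $H(Y_{12}\mid S_r)-H(Y_{12}\mid S_{r+1})\le(\log 2)/\sqrt n$; thus $H(S)\le n^{3/2}\log 2$. Pinsker's inequality turns this entropy increment into $\Ex[d^S_{ijk}]\le 2n^{-1/4}$ for every triangle $\{i,j,k\}$, whence the total covariance error $\Delta$ satisfies $\Ex[\Delta]\le n^{11/4}$. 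Convexity is avoided entirely: since $\Phi_n$ is nonnegative and decreasing, one simply bounds $\Ex[\Phi_n(t+\Delta)]\ge\Pr(\Delta\le n^{23/8})\cdot\Phi_n(t+n^{23/8})\ge(1-n^{-1/8})\Phi_n(t+n^{23/8})$ by Markov. The exponents $23/8$ and $15/8$ are just what falls out of choosing the Markov threshold at $n^{11/4+1/8}$ and absorbing the $(1-n^{-1/8})$ multiplicative loss against $\Phi_n\le\binom{n}{2}\log 2$.
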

\begin{remark}
  Note that the two error terms in the above estimate are better than $o(n^3)$ and $o(n^2)$, respectively. In the remainder of this paper, we follow the literature and prove results with error terms in the corresponding estimates of the lower tail probabilities being inexplicit, but here we made an exception.
\end{remark}

We now formulate a general result concerning the lower tail of subgraph counts in $G(n,p)$.  In order to phrase the minimisation problem in the case $p \neq \frac12$, it is convenient to first define
\[
  i_p(q) \coloneqq q\log\frac{q}{p} + (1-q) \log \frac{1-q}{1-p}.
\]
Further, given graphs $H$ and $G$, let $N_H(G)$ denote the number of copies of $H$ in $G$. For every graph $H$, integer $n$, real $p \in (0,1)$, and every $\eta \in [0,1]$, let
\[
  \Phi_{n,p}^H(\eta) \coloneqq \min\left\{ \sum_{e \in \binom{\br{n}}{2}} i_p(q_e) : \Ex\left[N_H\big(G(n,q)\big)\right] \le \eta \cdot \Ex\left[N_H\big(G(n,p)\big)\right] \right\},
\]
where the minimum is taken over all $q\in[0,1]^{\binom{\br{n}}{2}}$, of course. Recall that the $2$-density of a graph $H$ is the quantity $m_2(H)$ defined as follows: If $H$ has at least two edges, then
\[
  m_2(H) \coloneqq \max\left\{\frac{e_F-1}{v_F-2} : F \subseteq H, e_F \ge 2\right\};
\]
otherwise, $m_2(H) \coloneqq \frac{1}{2}$. The notation $F\subseteq H$ here means that $F$ is a subgraph of $H$. For example, $m_2(
\begin{tikzpicture}[baseline]
  \fill[black] (0,0.2) circle [radius=1pt];
  \fill[black] (-0.12,0) circle [radius=1pt];
  \fill[black] (0.12,0) circle [radius=1pt];
  \draw (0,0.2) -- (-0.12,0) -- (0.12,0) -- (0,0.2);
\end{tikzpicture})=2$ but $m_2(
\begin{tikzpicture}[baseline]
  \fill[black] (0,0) circle [radius=1pt];
  \fill[black] (0,0.2) circle [radius=1pt];
  \fill[black] (0.2,0) circle [radius=1pt];
  \fill[black] (0.2,0.2) circle [radius=1pt];
  \fill[black] (0.4,0.2) circle [radius=1pt];
  \draw (0,0) -- (0,0.2) -- (0.2,0.2) -- (0.4,0.2);
  \draw (0,0) -- (0.2,0) -- (0.2,0.2) -- (0,0);
  \draw (0,0.2) -- (0.2,0);
\end{tikzpicture})=\frac52$ because the maximum is attained at the subgraph 
\begin{tikzpicture}[baseline]
  \fill[black] (0,0) circle [radius=1pt];
  \fill[black] (0,0.2) circle [radius=1pt];
  \fill[black] (0.2,0) circle [radius=1pt];
  \fill[black] (0.2,0.2) circle [radius=1pt];
  \draw (0,0) -- (0,0.2) -- (0.2,0.2);
  \draw (0,0) -- (0.2,0) -- (0.2,0.2) -- (0,0);
  \draw (0,0.2) -- (0.2,0);
\end{tikzpicture}.

\begin{thm}
  \label{thm:graphs}
  For every nonempty graph $H$, all $p_0 < 1$, and every $\eps > 0$, there exists a constant $L$ such that the following holds:  Suppose that $Ln^{-1/m_2(H)} \le p \le p_0$ and let $X \coloneqq N_H\big(G(n,p)\big)$.  Then, for every $\eta \in [0,1]$,
  \[
    (1-\eps) \cdot \Phi_{n,p}^H(\eta+\eps) \le - \log \Pr\big(X \le \eta \Ex[X]\big) \le (1 + \eps) \cdot \Phi_{n,p}^H\big((1-\eps)\eta\big).
  \]
\end{thm}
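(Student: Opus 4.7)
The proof plan splits into the two inequalities, proved by quite different methods. The upper bound on $-\log \Pr$ is the substantive direction and relies on the main ``weak hypergraph container'' technical result promised in the abstract; the lower bound on $-\log\Pr$ (equivalently, exhibiting a product-measure strategy achieving the stated probability) is a routine change-of-measure argument. I will describe both, then highlight the main obstacle.

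For the \emph{lower} bound on $\Pr(X \le \eta\Ex[X])$, let $q^*$ attain, up to $o(1)$, the minimum in $\Phi_{n,p}^H((1-\eps)\eta)$, so that $\Ex[N_H(G(n,q^*))] \le (1-\eps)\eta\Ex[X]$. Since $\DKL(G(n,q^*)\|G(n,p)) = \sum_e i_p(q_e^*)$, the data-processing inequality for relative entropy applied to $A = \{X \le \eta\Ex[X]\}$ gives
\[
  -\log \Pr_p(A) \le \frac{\Phi_{n,p}^H((1-\eps)\eta) + O(1)}{\Pr_{q^*}(A)}.
\]
It then suffices to check $\Pr_{q^*}(A) \ge 1 - O(\eps)$. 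Since $\Ex_{q^*}[X]$ sits below the threshold by $\eps\eta\Ex[X]$, this is a Chebyshev bound: the assumption $p \ge L n^{-1/m_2(H)}$ (hence $q^* \le p$ pointwise in the relevant regime) controls $\Var_{q^*}(X)$ well below $(\eps\eta\Ex[X])^2$ via the Janson--{\L}uczak--Ruci\'nski-style moment computation keyed to the $2$-density.

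For the \emph{upper} bound on $\Pr(X \le \eta\Ex[X])$, I would set up the $e(H)$-uniform hypergraph $\cH$ on vertex set $\binom{\br{n}}{2}$ whose edges are the edge sets of copies of $H$ in $K_n$; then $X$ counts the hyperedges induced by the random vertex set $G(n,p)$. Applying the paper's main probabilistic container result to $\cH$ should produce a family $\cF$ of density vectors $q$ with $\Ex_q[X] \le (\eta + \eps)\Ex[X]$ and $\log|\cF| = o\bigl(\Phi_{n,p}^H(\eta+\eps)\bigr)$, together with an assignment $G \mapsto q(G) \in \cF$ covering $A$, so that a union bound yields
\[
  \Pr_p(A) \le |\cF| \cdot \exp\!\bigl(-(1-\eps)\Phi_{n,p}^H(\eta+\eps)\bigr),
\]
after bounding $\Pr_p\bigl(q(G) = q\bigr)$ by $\exp\bigl(-(1-\eps)\sum_e i_p(q_e)\bigr)$ via Sanov-type large deviations for independent Bernoulli coordinates. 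Taking logarithms delivers $-\log \Pr_p(A) \ge (1-\eps)\Phi_{n,p}^H(\eta+\eps)$. Checking the degree hypotheses of the container result on $\cH$ reduces, via the standard supersaturation bound $\deg_\cH(S) \ll n^{v_H - v(S)}$ for $S \subseteq E(H)$ spanning $v(S)$ vertices, precisely to the assumption $p \ge L n^{-1/m_2(H)}$ --- the same calculation underlying the Janson--{\L}uczak--Ruci\'nski threshold for $H$-appearance.

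The substantive obstacle is of course the container-style technical result itself; granting it, the only remaining wrinkles are mild regularity bookkeeping for $\Phi_{n,p}^H$ in its $\eta$ argument. Specifically, both directions yield bounds at slightly perturbed values of $\eta$, and I would show $\Phi_{n,p}^H(\eta \pm O(\eps))$ differs from $\Phi_{n,p}^H(\eta)$ by at most a factor $1 + O(\eps)$ using convexity of $i_p$ (its second derivative is $\frac{1}{q(1-q)} > 0$) together with a simple scaling on the minimiser. The bound $p \le p_0 < 1$ is used only to keep $i_p$ uniformly convex on the relevant range, preventing degeneracies near the boundary.
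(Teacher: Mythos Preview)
Your setup is correct: encode copies of $H$ as edges of an $e_H$-uniform hypergraph $\cH$ on $\binom{\br{n}}{2}$ and verify the degree hypotheses via the $2$-density calculation. But two steps do not go through as written.

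First, the tilting argument has a gap. Your data-processing bound $-\log\Pr_p(A) \le (\Phi+O(1))/\Pr_{q^*}(A)$ requires $\Pr_{q^*}(A) \ge 1-O(\eps)$, and you propose Chebyshev on $\Var_{q^*}(X)$. But the only information on $q^*$ is $\Ex_{q^*}[X] \le (1-\eps)\eta\,\Ex_p[X]$; the gap to the threshold is $\eps\eta\,\Ex_p[X]$, which vanishes as $\eta\to 0$, while nothing forces $\Var_{q^*}(X)$ to vanish at the same rate. (For $\eta$ near zero you are asking about near-$H$-freeness, where concentration of $X$ under $q^*$ simply is not the mechanism.) The paper uses Markov on $X$ to get only $\Pr_{q^*}(A)\ge\eps$, and compensates with a sharper tilting estimate: it restricts to the set $\cY_2$ where the log-likelihood $J(y)=\sum_e\bigl[y_e\log\frac{q_e^*}{p}+(1-y_e)\log\frac{1-q_e^*}{1-p}\bigr]$ is within $\eps\Phi+O(1)$ of its mean $\Phi$, and controls $\Pr_{q^*}(\cY_2^c)$ by Chebyshev on $J$ --- a sum of \emph{independent} terms with $\Var(J)\le K\,\Ex[J]$ (Lemma~\ref{lem:calc Var}). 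On $\cY_2$ the Radon--Nikodym derivative is at most $\exp\bigl((1+\eps)\Phi+O(1)\bigr)$, and $\Pr_{q^*}(\cY_1\cap\cY_2)\ge\eps/2$ suffices.

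Second, your picture of the main technical result is not what the paper provides. There is no family $\cF$ of density vectors, no covering map $G\mapsto q(G)$, no union bound, and no Sanov step. Theorem~\ref{thm:main-hypergraphs} asserts directly that $-\log\Pr(X\le\eta\,\Ex[X])\ge(1-\eps)\,\Phi_p^{\cH}(\eta+\eps)-C$ under the degree hypotheses; its proof bounds $I_p(Y)=-\log\Pr(A)$ from below by finding a (single, deterministic) conditioning set $W$ such that the conditional marginals $q_v^W=\Ex[Y_v\mid(Y_w)_{w\in W}]$ satisfy $f(q^W)\le(\eta+\eps)p^r e(\cH)$ with probability $\ge 1-\eps$, whence $\sum_v i_p(q_v^W)\ge\Phi(\eta+\eps)$ on that event. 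Finally, the additive constant $C$ is absorbed not via regularity of $\eta\mapsto\Phi(\eta)$ (which need not hold near $\eta=0$) but via the lower bound $\Phi(1-\eps/2)\ge c\,\eps^2 n^2 p$ of Lemma~\ref{lemma:Phi-p-cH-lower}, which dominates $C$ once $L$ is large.
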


A key feature of Theorem~\ref{thm:graphs} is that the lower-bound assumption on $p$ is optimal up to constants.  To see this, note first that, by Harris's inequality, for every $F \subseteq H$,
\[
  \Pr(X = 0) = \Pr\big(H \nsubseteq G(n,p)\big) \ge \Pr\big(F \nsubseteq G(n,p)\big) \ge (1-p^{e_F})^{n^{v_F}} \ge \exp(-2n^{v_F}p^{e_F}).
\]
Moreover, $m_2(H)$ is defined so that $n^{v_F}p^{e_F} = o(n^2p)$ for some $F \subseteq H$ precisely when $p \ll n^{-1/m_2(H)}$. On the other hand, for all $H$, $n$, $p$, and $\eta < 1$, we have $\Phi_{n,p}^H(\eta) \ge c n^2p$ for some positive $c$ that depends only on $H$ and $\eta$ (see Lemma \ref{lemma:Phi-p-cH-lower} below).

The boundary case $\eta = 0$ in Theorem~\ref{thm:graphs}, the probability that a random graph is \emph{$H$-free}, has been extensively studied in the literature.  In particular, {\L}uczak~\cite{Luc00} computed the asymptotics of $\log \Pr\big(K_3 \nsubseteq G(n,p)\big)$ for all $p \gg n^{-1/m_2(K_3)}$ and derived an asymptotic formula for $\log \Pr\big(H \nsubseteq G(n,p)\big)$, for every nonbipartite graph $H$ and all $p \gg n^{-1/m_2(H)}$, from the so-called K{\L}R conjecture~\cite{KohLucRod97}, which was proved some fifteen years later by Balogh, Morris, and Samotij~\cite{BalMorSam15} and by Saxton and Thomason~\cite{SaxTho15}.  In fact, the hypergraph container theorems proved in~\cite{BalMorSam15, SaxTho15} can be used to compute the asymptotics of the logarithms of these probabilities directly, using simple, well-known results in extremal graph theory, see~\cite[\S1.3]{BalMorSam15}.

Our methods allow us to generalise Theorem~\ref{thm:graphs} to $s$-uniform hypergraphs in a straightforward way. Suppose that $H$ is a nonempty $s$-uniform hypergraph. The $s$-density of $H$ is the quantity $m_s(H)$ defined as follows: If $H$ has at least two edges, then
\begin{equation}
  \label{eq:msH}
  m_s(H) \coloneqq \max\left\{\frac{e_F-1}{v_F-s} : F \subseteq H, e_F \ge 2\right\};
\end{equation}
otherwise, $m_s(H) \coloneqq \frac{1}{s}$. For every integer $n$, real $p \in (0,1)$, and every $\eta \in [0,1]$, we define $\Phi_{n,p}^H(\eta)$ analogously to the graph case:
\[
  \Phi_{n,p}^H(\eta) \coloneqq \min\left\{ \sum_{e \in \binom{\br{n}}{s}} i_p(q_e) : \Ex\left[N_H\big(G^{(s)}(n,q)\big)\right] \le \eta \cdot \Ex\left[N_H\big(G^{(s)}(n,p)\big)\right] \right\},
\]
where $G^{(s)}(n, q)$ is the binomial random $s$-uniform hypergraph with vertex set~$\br{n}$.

\begin{thm}
  \label{thm:hypergraphs}
  For every nonempty $s$-uniform hypergraph $H$, all $p_0 < 1$, and every $\eps > 0$, there exists a constant $L$ such that the following holds:  Suppose that $Ln^{-1/m_s(H)} \le p \le p_0$ and let $X \coloneqq N_H\big(G^{(s)}(n,p)\big)$.  Then, for every $\eta \in [0,1]$,
  \[
    (1-\eps) \cdot \Phi_{n,p}^H(\eta+\eps) \le - \log \Pr\big(X \le \eta \Ex[X]\big) \le (1 + \eps) \cdot \Phi_{n,p}^H\big((1-\eps)\eta\big).
  \]
\end{thm}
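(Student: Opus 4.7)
The plan is to follow exactly the scheme used for Theorem~\ref{thm:graphs}, replacing pairs by $s$-subsets throughout. Both directions reduce to the paper's main technical result (the probabilistic container lemma advertised in the abstract), applied to an auxiliary hypergraph built from copies of $H$.

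\emph{Upper bound.} Let $q^*$ achieve the minimum in $\Phi_{n,p}^H((1-\eps)\eta)$ and let $\mu_{q^*}$ be the product Bernoulli measure on $\binom{\br{n}}{s}$ whose $e$-marginal is $q^*_e$. By construction $\Ex_{\mu_{q^*}}[X] \le (1-\eps)\eta\,\Ex[X]$. For $p \ge Ln^{-1/m_s(H)}$ with $L$ large, a second-moment bound (the relevant Janson-type variance estimates pass from $\mu_p$ to $\mu_{q^*}$ because $q^*_e \le 1$) gives $\mu_{q^*}(X \le \eta\,\Ex[X]) \ge \tfrac12$. The standard change-of-measure inequality
\[
  -\log \Pr(X \le \eta\,\Ex[X]) \le \DKL(\mu_{q^*}\,\|\,\mu_p) + \log 2 = \Phi_{n,p}^H((1-\eps)\eta) + \log 2,
\]
together with the sub-exponential growth of $\Phi_{n,p}^H$ in the density regime considered, yields the claimed $(1+\eps)$-bound.

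\emph{Lower bound.} Construct an auxiliary $e_H$-uniform hypergraph $\cH$ on vertex set $V(\cH) = \binom{\br{n}}{s}$ whose edges are the $e_H$-subsets of $V(\cH)$ spanning a copy of $H$ in the complete $s$-uniform hypergraph on $\br{n}$. A binomial $p$-random subset of $V(\cH)$ is exactly $G^{(s)}(n,p)$, and the number of edges of $\cH$ it induces equals $|\Aut(H)|\cdot X$. Apply the main technical result to $\cH$: it converts $-\log\Pr(X \le \eta\,\Ex[X])$ into the entropic cost of the best product distribution on $V(\cH)$ compatible with the same expected induced-edge count --- that is, into $(1-\eps)\Phi_{n,p}^H(\eta+\eps)$ --- provided the maximum $j$-degrees of $\cH$ satisfy the required smallness hypothesis for every $1 \le j < e_H$.

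The only genuinely hypergraph-specific calculation is to verify the max-degree hypothesis from the assumption $p \ge Ln^{-1/m_s(H)}$. For every $F \subseteq H$ with $e_F \ge 1$, the number of copies of $H$ in the complete $s$-uniform hypergraph on $\br{n}$ extending a fixed copy of $F$ is $\Theta(n^{v_H - v_F})$, so the $e_F$-th maximum degree of $\cH$ is of that order. Plugging in, the required inequalities reduce to $p^{e_F-1} n^{v_F - s} \ge L'$ for every $F \subseteq H$ with $e_F \ge 2$, which is precisely the content of~\eqref{eq:msH} once $L$ is chosen large enough. Consequently the only real obstacle is the main technical result itself; the rest of the derivation is formally identical to the graph case $s=2$, which is why the authors describe this generalisation as straightforward.
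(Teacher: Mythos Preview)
Your lower-bound direction (the application of the main technical result, Theorem~\ref{thm:main-hypergraphs}) is essentially the paper's argument: build the $e_H$-uniform auxiliary hypergraph $\cH$ on $\binom{\br{n}}{s}$, and check that $\Delta_u(\cH) \lesssim (n^{-1/m_s(H)})^{u-1}\cdot e(\cH)/v(\cH)$ via the definition of $m_s(H)$. Two small corrections: the number of induced edges of $\cH$ equals $X$, not $|\Aut(H)|\cdot X$ (copies are counted as subgraphs, so each contributes one hyperedge), and you have omitted the step where the additive constant $C$ coming out of Theorems~\ref{thm:main-hypergraphs} and~\ref{thm:lower-bound} is absorbed into the $(1\pm\eps)$ multiplicative error; the paper does this via Lemma~\ref{lemma:Phi-p-cH-lower}, which gives $\Phi_{n,p}^H(1-\eps/2) \ge c_\eps\,|V|p \ge c_\eps L$, so that $C \le (\eps/2)\Phi$ once $L$ is large.

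Your upper-bound direction, however, has a real gap. The displayed ``standard change-of-measure inequality''
\[
  -\log \Pr(X \le \eta\Ex[X]) \le \DKL(\mu_{q^*}\,\|\,\mu_p) + \log 2
\]
is false in general. The data-processing bound gives only
\[
  -\log \mu_p(A) \le \frac{\DKL(\mu_{q^*}\,\|\,\mu_p) + \log 2}{\mu_{q^*}(A)},
\]
so with $\mu_{q^*}(A) \ge \tfrac12$ you are off by a factor of $2$, which cannot be repaired by any ``sub-exponential growth'' consideration. To get the factor $(1+\eps)$ you would need $\mu_{q^*}(A)\to 1$, and your vague second-moment claim does not deliver this (you have no control over $\Var_{q^*}(X)$ relative to $(\eta\Ex_p[X])^2$ for an arbitrary minimiser $q^*$, and ``$q^*_e \le 1$'' is not the relevant bound --- the useful fact is $q^*_e \le p$). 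The paper's Theorem~\ref{thm:lower-bound} avoids this entirely: it uses Markov to get $\mu_{q^*}(A) \ge \eps$, and then, crucially, applies Chebyshev not to $X$ but to the log-likelihood ratio $J(y) = \sum_v \big(y_v\log\frac{q^*_v}{p} + (1-y_v)\log\frac{1-q^*_v}{1-p}\big)$, whose mean under $\mu_{q^*}$ is exactly $\Phi$ and whose variance is $O(\Phi)$ by Lemma~\ref{lem:calc Var}. Restricting to the sub-event $\{J \le (1+\eps)\Phi + C'\}$ and then changing measure gives the correct $(1+\eps)$ factor.
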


As in Theorem~\ref{thm:graphs}, the lower-bound assumption on $p$ in Theorem~\ref{thm:hypergraphs} is optimal and the asymptotics of $\log \Pr(X = 0)$ can be derived from the hypergraph container theorems.

The final application of our new entropy method is a solution to the lower tail problem for the number of arithmetic progression of a give length in a binomial random subset of $\br{n}$, which will demonstrate that symmetry is not crucial for our methods. Given a function $q \colon \br{n} \to [0,1]$, we denote by $\br{n}_q$ the random subset of $\br{n}$ obtained by independently retaining each $i \in \br{n}$ with probability $q_i$. For a positive integer $k$ and a set $I \subseteq \br{n}$, let $A_k(I)$ denote the number of $k$-term arithmetic progressions in $I$.

\begin{thm}
  \label{thm:APs}
  For every positive integer $k$, all $p_0 < 1$, and every $\eps > 0$, there exists a constant $L$ such that the following holds:  Suppose that $Ln^{-1/(k-1)} \le p \le p_0$ and let $X \coloneqq A_k\big(\br{n}_p\big)$.  Then, for every $\eta \in [0,1]$,
  \[
    (1-\eps) \cdot \Phi_{n,p}^k  (\eta+\eps) \le - \log \Pr\big(X \le \eta \Ex[X]\big) \le (1 + \eps) \cdot \Phi_{n,p}^{k} \big((1-\eps)\eta\big).
  \]
\end{thm}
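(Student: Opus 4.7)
The plan is to derive Theorem~\ref{thm:APs} as a direct instantiation of the paper's main technical result (the hypergraph lower-tail estimate announced in the abstract) applied to the $k$-uniform hypergraph whose edges are the $k$-term arithmetic progressions in $\br{n}$. That result estimates the lower-tail probability for the number of edges of a uniform hypergraph $\cH$ induced by a binomial random subset of its vertex set, in terms of the entropic minimum over product measures on $V(\cH)$. The AP problem fits this framework literally: taking $\cH$ with $V(\cH) = \br{n}$ and edge-set consisting of all $k$-APs, the induced edge count is exactly $X$, and the natural entropic minimum over product measures on $\br{n}$ is exactly $\Phi_{n,p}^k$.

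To apply the main technical result one verifies its degree/codegree hypotheses for this $\cH$. The relevant bounds are elementary: $e(\cH) = \Theta_k(n^2)$, so $\Ex[X] = \Theta_k(n^2 p^k)$; each single vertex lies in $O_k(n)$ edges; and for every $\ell \ge 2$, every $\ell$-subset of $\br{n}$ lies in at most $\binom{k}{2}$ edges, because two distinct points pin the common difference down to one of $\binom{k}{2}$ possible values and hence determine the AP up to placement. Under the hypothesis $p \ge L n^{-1/(k-1)}$, these bounds are exactly the balanced-hypergraph conditions expected at the AP-appearance threshold: the maximum single-vertex degree $\Theta(n)$ multiplied by $p^{k-1}$ is $\Omega(L)$, while the higher codegree-times-power-of-$p$ ratios are $O(1)$ or smaller.

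The upper-bound half of the claimed two-sided estimate I would obtain by a standard change-of-measure argument. For a near-minimiser $q$ of $\Phi_{n,p}^k((1-\eps)\eta)$, one has $\Ex_q[X] \le (1-\eps)\eta\,\Ex[X]$, and a Chebyshev bound (using the codegree data above to control the variance of $X$ under $q$) shows that $\Pr_q(X \le \eta \Ex[X])$ is close to $1$; combining this with the standard Gibbs variational inequality relating $\log \Pr_p(A)$, $\log \Pr_q(A)$, and $\DKL(q\,\|\,p)$ yields the desired $(1+\eps)$-approximation. The lower-bound half is precisely the content of the main technical result.

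The principal difficulty is entirely absorbed by the main technical result itself; the AP-specific portion reduces to the codegree bookkeeping above. The one subtlety worth flagging is that $\cH$ is \emph{not} vertex-transitive --- vertices near the endpoints of $\br{n}$ lie in fewer APs than those in the middle --- so the main technical result must apply to arbitrary (not necessarily symmetric) hypergraphs satisfying only degree bounds, and it is precisely this generality that the authors emphasise in the paragraph introducing Theorem~\ref{thm:APs}.
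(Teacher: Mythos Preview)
Your proposal is correct and matches the paper's approach essentially line for line: the same $k$-uniform AP hypergraph on $\br{n}$, the same degree bounds ($\Delta_1(\cH) \le kn$, $\Delta_s(\cH) \le \binom{k}{2}$ for $s \ge 2$, $e(\cH) = \Theta_k(n^2)$), then Theorem~\ref{thm:main-hypergraphs} for the lower bound and the paper's general tilting result Theorem~\ref{thm:lower-bound} for the upper bound, with Lemma~\ref{lemma:Phi-p-cH-lower} invoked to absorb the additive constants into the $(1\pm\eps)$ factors. The only cosmetic difference is in your upper-bound sketch: the paper's Theorem~\ref{thm:lower-bound} uses Markov's inequality on $X$ (just nonnegativity --- no codegree data needed) together with Chebyshev on the log-likelihood ratio $J(Y')$, rather than Chebyshev on $X$ itself, but both variants are standard and work here.
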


As before, the lower-bound assumption on $p$ in Theorem~\ref{thm:APs} is optimal and the asymptotics of $\log \Pr(X = 0)$ can be derived from the hypergraph container theorems, see~\cite[Theorem~1.1]{BalMorSam15}.

\subsection{The main technical result}
\label{sec:main-techn-result}

A natural way to generalise Theorems~\ref{thm:graphs}, \ref{thm:hypergraphs}, and \ref{thm:APs} is to represent the combinatorial objects we are counting as edges of an auxiliary hypergraph (no relation to the hypergraphs of Theorem \ref{thm:hypergraphs}). This way, each of the respective random variables counts the number of edges of such a hypergraph that are induced by random subset of its vertices. This idea is not new -- the transference principles of Conlon and Gowers~\cite{ConGow16} and Schacht~\cite{Sch16} and the hypergraph container theorems~\cite{BalMorSam15, SaxTho15} are prime examples of why taking such an abstract viewpoint may prove beneficial in our context. For example, in order to express the number of triangles in $G(n,p)$ this way, we consider the $3$-uniform hypergraph with vertex set $\binom{\br{n}}{2}$, the edge set of the complete graph on $\br{n}$, whose hyperedges are the $\binom{n}{3}$ triples of edges that form triangles in the complete graph on $\br{n}$.

We are thus led to ask the following general question: Given a hypergraph $\cH$ and a $p \in [0,1]$, what is the probability that a random subset of the vertices of $\cH$ formed by independently retaining each vertex with probability $p$ contains atypically few hyperedges? For extra generality, we allow the edges of the hypergraph to have positive weights.

Suppose that a hypergraph $\cH$ is equipped with a weight function $d \colon \cH \to (0, \infty)$. We shall denote by $e(\cH)$ the sum $\sum_{A \in \cH} d_A$ of all edge weights and, for every set $B \subseteq V(\cH)$, we shall write
\begin{equation}
  \label{eq:deg-cH-B}
  \deg_\cH B \coloneqq \sum_{B \subseteq A \in \cH} d_A.
\end{equation}
Moreover, for every $s \in \br{r}$, we define
\[
  \Delta_s(\cH) \coloneqq \max\left\{\deg_\cH B : B \subseteq V \text{ and } |B| = s\right\}.
\]
Note that when $d_A = 1$ for every $A \in \cH$, then we may simply view $\cH$ as a hypergraph; in this case, the above definitions give the usual notions of edge counts and degrees.

Let $\cH$ be a hypergraph and denote $V = V(\cH)$ for brevity. Let $Y = (Y_v)_{v \in V}$ be a sequence of i.i.d.\ Bernoulli random variables with success probability $p$, one for every vertex of the hypergraph $\cH$, and let $R$ be the corresponding random subset of $V$, i.e., $R \coloneqq \{v\in V:Y_v=1\}$. For a function $q \colon V \to [0,1]$, we let $\Yq = (Y_v')_{v \in V}$ be a sequence of independent Bernoulli random variables such that $Y_v'$ has success probability $q_v$ for each $v \in V$ and let $R^{(q)}$ be the corresponding random subset of $V$. For every nonnegative real $\eta$, define
\begin{equation}\label{eq:3.5}
    \Phi_p^{\cH}(\eta) \coloneqq \min\left\{\DKL\big(\Yq \,\|\, Y\big) : q \in [0,1]^V \text{ and } \Ex[e(\cH[R^{(q)}])]\le \eta \cdot \Ex[e(\cH[R])]\right\},
\end{equation}
where $\DKL$ is the Kullback--Leibler divergence, so that,
\[
  \DKL\big(\Yq \,\|\, Y\big) = \sum_{v \in V} i_p(q_v) = \sum_{v \in V} q_v \log\frac{q_v}{p} + (1-q_v) \log\frac{1-q_v}{1-p}.
\]
Here and below, $\cH[R]$ stands for the restriction of $\cH$ to $R$, namely the hypergraph whose vertices are $R$ and whose hyperedges are $\{A\in\cH:A\subseteq R\}$; thus $e(\cH[R])=\sum_{A\subseteq R}d_A$. Also, for $W \subseteq V(\cH)$, we write $\cH - W$ in place of $\cH[V(\cH) \setminus W]$.

\begin{thm}
  \label{thm:main-hypergraphs}
  For every integer $r$ and all $p_0<1$, $\eps>0$, and $K$, there exists a positive $\lambda$ and a $C$ such that the following holds. Let $V$ be a finite set and let $\cH$ be a nonempty $r$-uniform hypergraph with vertex set $V$ and weight function $d \colon \cH \to (0, \infty)$. Let $p \in (0, p_0]$ and let $R$ be the $p$-random subset of $V$. Suppose that, for every $s \in \br{r}$, the maximal degree $\Delta_s(\cH)$ satisfies
  \begin{equation}\label{eq:half}
    \Delta_s(\cH) \le K \cdot (\lambda p)^{s-1} \cdot \frac{e(\cH)}{v(\cH)}.
  \end{equation}
  Then, letting $X \coloneqq e(\cH[R])$, for every nonnegative real $\eta$,
  \[
    - \log \Pr\big(X \le \eta\Ex[X]\big) \ge (1-\eps) \Phi_p^{\cH}(\eta+\eps) - C.
  \]
\end{thm}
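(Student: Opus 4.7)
The plan is to recast the lower-tail probability as a Kullback--Leibler divergence and then bound that divergence from below by $\Phi_p^{\cH}(\eta+\eps)$. Let $\nu = \Ber(p)^V$ denote the unconditional law of $Y$ and let $\mu$ be its conditioning on $\{X \le \eta \Ex X\}$. By Gibbs's lemma,
$$-\log \Pr(X \le \eta \Ex X) = \DKL(\mu \,\|\, \nu),$$
so it suffices to prove $\DKL(\mu \,\|\, \nu) \ge (1-\eps)\,\Phi_p^{\cH}(\eta+\eps) - C$. Fix an arbitrary ordering $v_1,\dotsc,v_n$ of $V$ and, for each $i$, define the $\mu$-measurable random variable $Q_i \coloneqq \Pr_\mu(Y_{v_i}=1 \mid Y_{v_1},\dotsc,Y_{v_{i-1}})$. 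The chain rule for Kullback--Leibler divergence yields the key identity
$$\DKL(\mu \,\|\, \nu) = \Ex_\mu\!\Bigl[\sum_{i=1}^{n} i_p(Q_i)\Bigr],$$
whose right-hand side has exactly the algebraic form of the entropy cost in the definition of $\Phi_p^{\cH}$, but evaluated at the random vector $Q = (Q_1,\dotsc,Q_n)$ rather than at a deterministic point of $[0,1]^V$.

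The theorem thus reduces to the feasibility claim that, with $\mu$-probability at least $1-\eps$, the vector $Q$ lies in the feasible region of the optimisation defining $\Phi_p^{\cH}(\eta+\eps)$, i.e.\
$$\sum_{A \in \cH} d_A \prod_{v \in A} Q_{i(v)} \le (\eta+\eps)\,\Ex X,$$
where $i(v)$ denotes the position of $v$ in the chosen ordering. On this favourable event one has $\sum_i i_p(Q_i) \ge \Phi_p^{\cH}(\eta+\eps)$ by definition of the minimum; since $i_p \ge 0$, dropping the contribution from the exceptional event produces the factor $(1-\eps)$ in front of $\Phi_p^{\cH}$, while the additive constant $C$ absorbs boundary error terms that arise in making the argument precise.

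The crux, and the place where the degree hypothesis \eqref{eq:half} is used, is the feasibility claim itself. Because $X = \sum_A d_A \prod_{v \in A} Y_v \le \eta \Ex X$ holds $\mu$-a.s., it is enough to control the expected discrepancy $\Ex_\mu\bigl[\sum_A d_A(\prod_{v \in A} Q_{i(v)} - \prod_{v \in A} Y_v)\bigr]$. Expanding via the hybrid identity
$$\prod_{v \in A} Q_{i(v)} - \prod_{v \in A} Y_v = \sum_{v \in A} \Bigl(\prod_{u : i(u) < i(v)} Y_u\Bigr)(Q_{i(v)} - Y_v)\Bigl(\prod_{u : i(u) > i(v)} Q_{i(u)}\Bigr)$$
and exploiting the martingale property of $Q_{i(v)} - Y_v$ relative to the revelation filtration, the expected discrepancy unfolds into a weighted sum of conditional covariances. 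Regrouping by the size $s \in \br{r}$ of the overlap between pairs of hyperedges entering each covariance, the level-$s$ contribution is bounded by $\Delta_s(\cH)$ times a factor of order $p^{s-1}$ coming from the common already-revealed coordinates; the hypothesis $\Delta_s(\cH) \le K(\lambda p)^{s-1} e(\cH)/v(\cH)$ is precisely engineered so that this contribution is $O_{r,K}(\lambda^{s-1}) \cdot \Ex X$. Summing over $s$ and choosing $\lambda$ small enough in terms of $\eps$, $r$, and $K$ makes the aggregate expected discrepancy at most $\eps^2 \Ex X$, and Markov's inequality then converts this into feasibility on a $\mu$-event of measure at least $1-\eps$. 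Quantitatively executing this covariance decomposition, cleanly handling all $r$ overlap levels simultaneously, and choosing the revelation order so that the per-edge bounds really are governed by the maximum degrees $\Delta_s(\cH)$ rather than by some larger quantity, is the principal technical obstacle.
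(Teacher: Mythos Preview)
Your opening moves are fine: the identity $-\log\Pr(X\le\eta\Ex X)=\DKL(\mu\,\|\,\nu)$ and the chain-rule expansion $\DKL(\mu\,\|\,\nu)=\Ex_\mu\bigl[\sum_i i_p(Q_i)\bigr]$ are both correct, and the reduction to the ``feasibility claim'' is exactly the right shape. The gap is in how you propose to prove feasibility.

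The hybrid/martingale expansion does not yield the covariance structure you describe. In the term
\[
\Bigl(\prod_{k<\ell}Y_{v_{j_k}}\Bigr)\,(Q_{j_\ell}-Y_{v_{j_\ell}})\,\Bigl(\prod_{k>\ell}Q_{j_k}\Bigr),
\]
the suffix $\prod_{k>\ell}Q_{j_k}$ is measurable with respect to the \emph{entire} history $Y_{v_1},\dotsc,Y_{v_{j_r-1}}$, not just the vertices of $A$. Consequently the conditional covariance between $Y_{v_{j_\ell}}$ and this suffix has no local interpretation in terms of hyperedges through $v_{j_\ell}$, and there is no mechanism by which ``regrouping by overlap size $s$'' would produce a bound governed by $\Delta_s(\cH)$. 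Your intuition that pairs of hyperedges should appear is borrowed from a different argument; here they do not arise. No choice of revelation order repairs this, because the problem is that each $Q_i$ depends on a growing $\sigma$-algebra rather than on a common one.

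The paper's proof takes a genuinely different route. Instead of full sequential conditioning, it constructs a \emph{small} set $W\subseteq V$ (with $|W|\le\alpha|V|$) and works with $q_v^W\coloneqq\Ex[Y_v\mid(Y_w)_{w\in W}]$, all measurable with respect to the \emph{same} $\sigma$-algebra. This sacrifices the chain-rule equality for the subadditivity inequality $I_p(Y)\ge\sum_{v\notin W}I_p(Y_v\mid(Y_w)_{w\in W})$, but in exchange the discrepancy $\Ex_W[Y_A]-\prod_{v\in A}q_v^W$ decomposes into local pieces $\Disc_W(B,b)=\Ex_W[Y_B]-\Ex_W[Y_{B\setminus\{b\}}]\Ex_W[Y_b]$ over $B\subseteq A$. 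The set $W$ is built iteratively: a Janson-type refinement of Pinsker's inequality (the paper's Lemma~\ref{lemma:main}) shows that either the aggregate discrepancy $\Err(W)$ is already small, or adding a \emph{random} set $W'$ of density $\tau$ increases $\sum_v I_p(Y_v\mid W)$ by $\Omega(p|V|)$; since this quantity is bounded by $|V|p/(1-p_0)$, the iteration terminates in $O(1)$ steps. It is in the second-moment analysis of this random augmentation---specifically, in bounding the pairwise terms $\Pr(E_B\cap E_{B'})$ in Lemma~\ref{lemma:main}---that pairs of hyperedges and the degrees $\Delta_s(\cH)$ genuinely enter. Harris's inequality is also used essentially, to ensure $\Ex_W[Y_A]\le p^{|A|}$ and hence allow $p'=p$ in Lemma~\ref{lemma:main}; without it the bounds lose a factor of $p$ and fail near the threshold. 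None of this structure is visible from the sequential-$Q_i$ viewpoint.
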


\begin{remark}
  Our argument gives the following explicit dependence of $\lambda$ and $C$ on the parameters:
  \[
    \lambda \ge 10^{-5} K^{-2} r^{-4} \eps^9 (1-p_0)
    \qquad
    \text{and}
    \qquad
    C \le 10^6K^2r^5\eps^{-9}(1-p_0)^{-1} \log\frac{1}{1-p_0}.
  \]
\end{remark}

The readers familiar with the hypergraph container method will likely notice striking similarities between the assumptions of Theorem~\ref{thm:main-hypergraphs} and the assumptions of the container lemmas proved in~\cite{BalMorSam15, BalSam20}. This is not a coincidence -- the boundary case $\eta = 0$ in Theorem~\ref{thm:main-hypergraphs} bounds the probability that the random set $R$ is independent in $\cH$ from above by $\Phi_p^{\cH}(\eps)$, a minimum over all distributions $q \in [0,1]^V$ such that $R^{(q)}$ induces at most $\eps e(\cH)$ edges in $\cH$, in expectation (cf.\ the combinatorial notion of containers for independent sets in~\cite{BalMorSam15, SaxTho15}).

While it might be tempting to replace $\Phi_p^{\cH}(\eta + \eps)$ in the assertion of Theorem~\ref{thm:main-hypergraphs} with $\Phi_p^{\cH}(\eta)$, or at least $\Phi_p^{\cH}((1+\eps)\eta)$, this is not always possible for $\eta$ very close to zero. To see this, observe first that $\Phi_p^{\cH}(0) = (\alpha(\cH) - v(\cH)) \cdot \log(1-p)$, where $\alpha(\cH)$ is the largest size of an independent set in~$\cH$. Suppose now that $\cH$ is the union of two hypergraphs with the same vertex set $V$: a dense hypergraph $\cH_1$ with $\alpha(\cH_1) \ge v(\cH)/2$ and a very sparse hypergraph $\cH_2$ with $\alpha(\cH_2) \le v(\cH)/4$. (For example, if $M$ and $v(\cH)$ are sufficiently large as a function of the uniformity $r$ only, then a random hypergraph with $M v(\cH)$ edges will typically have this property). Now, on the one hand,
\[
  \Phi_p^{\cH}(0) \ge \big(\alpha(\cH_2) - v(\cH)\big) \cdot \log(1-p) \ge \frac{3v(\cH)}{4} \cdot \log\frac{1}{1-p}
\]
but, on the other hand, by Harris's inequality, the $p$-random subset of some largest independent set of $\cH_1$ has probability at least $(1-p^r)^{e(\cH_2)}$ to be independent also in $\cH_2$ and thus, when $p$ is sufficiently small,
\[
  \Pr(X = 0) \ge (1-p)^{v(\cH) - \alpha(\cH_1)} \cdot e^{-2p^re(\cH_2)} \ge (1-p)^{2v(\cH)/3},
\]
showing that $-\log\Pr(X=0)$ is not close to $\Phi_p^{\cH}(0)$.

For easier comparison with the literature, let us reformulate Theorem~\ref{thm:main-hypergraphs} in the language of polynomials.  We retain the notations $V$, $Y$, and $Y^{(q)}$ as above.  We replace a weighted hypergraph $\cH$ with a homogeneous polynomial $f$ by turning each edge $A$ of $\cH$ with the monomial $d_A \cdot \prod_{v \in A}y_v$. The definition of $\Phi$ thus becomes
\[
  \Phi_p^f(\eta)=\min\left\{\DKL(Y^{(q)}\,\|\,Y):q\in[0,1]^V, \Ex[f(Y^{(q)})]\le \eta \cdot \Ex[f(Y)]\right\}.
\]
Moreover, the assumption~\eqref{eq:half} can be now expressed in terms of partial derivatives of $f$. Given a $B =\{v_1,\dotsc,v_k\}\subseteq V$ and a polynomial $f$ in $|V|$ variables, we denote
\[
\partial_B f \coloneqq\frac{\partial}{\partial v_1}\dotsb\frac{\partial}{\partial v_k}f.
\]
The following statement is a reformulation of Theorem~\ref{thm:main-hypergraphs}.
\begin{thm}
  \label{thm:main}
  For every integer $r$ and all  $p_0<1$, $\eps>0$, and $K$, there exists a positive $\lambda$ and a $C$ such that the following holds.
  Let $V$ be a finite set and let $f$ be an $r$-homogeneous $V$-variate multilinear polynomial with nonnegative coefficients. Let $p \in (0, p_0]$ and let $Y = (Y_v)_{v \in V}$ be a sequence of i.i.d.\ $\Ber(p)$ random variables. Suppose that, for every nonempty $B \subseteq V$ with $|B| \le r$,
  \[
    \partial_B f(\one) \le K \cdot (\lambda p)^{|B|-1} \cdot \frac{f(\one)}{|V|}.
  \]
  Then, letting $X \coloneqq f(Y)$, for every nonnegative real $\eta$,
  \[
    - \log \Pr\big(X \le \eta\Ex[X]\big) \ge (1-\eps) \Phi_p^f(\eta+\eps) - C.
  \]
\end{thm}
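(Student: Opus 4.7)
The plan is to exploit the variational identity $-\log\Pr(A) = \DKL(\mu \,\|\, \nu)$, where $A = \{X \le \eta\Ex[X]\}$, $\nu$ is the product law of $Y$, and $\mu \coloneqq \nu(\cdot \mid A)$ is the conditional measure. Introduce the marginals $q_v \coloneqq \Ex_\mu[Y_v]$ and the mean-field product law $\bar\mu$ with these Bernoulli parameters. Since $\nu$ is itself a product, a direct calculation yields the Pythagorean identity
\[
  \DKL(\mu \,\|\, \nu) = \DKL(\mu \,\|\, \bar\mu) + \sum_{v \in V} i_p(q_v).
\]
In particular $-\log\Pr(A) \ge \sum_v i_p(q_v)$, and the claimed inequality follows at once (even with $C = 0$ and $\eps = 0$) provided the marginals $q$ are feasible for the program defining $\Phi_p^\cH(\eta+\eps)$, that is, $\sum_{A \in \cH} d_A \prod_{v \in A} q_v \le (\eta+\eps)\Ex_\nu[X]$.

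The technical heart of the proof is thus to control the \emph{correlation correction}
\[
  \Ex_{\bar\mu}[X] - \Ex_\mu[X] = \sum_{A \in \cH} d_A\Bigl(\prod_{v \in A}q_v - \Pr_\mu\bigl(A \subseteq R\bigr)\Bigr).
\]
If this quantity is $\le \eps\Ex_\nu[X]$ we are done, so assume otherwise. My strategy is to play a tradeoff between the two summands in the Pythagorean identity: \emph{either} $\bar\mu$ is (nearly) feasible and we win directly, \emph{or} the $\DKL(\mu \,\|\, \bar\mu)$ term is already large enough on its own to absorb the discrepancy. The link between the correction and $\DKL(\mu \,\|\, \bar\mu)$ is naturally made by the centred-moment expansion
\[
  \prod_{v \in A}q_v - \Pr_\mu(A \subseteq R) = -\sum_{\substack{B \subseteq A\\ |B|\ge 2}}\Bigl(\prod_{v \in A \setminus B}q_v\Bigr)\,\Ex_\mu\Bigl[\prod_{v \in B}(Y_v - q_v)\Bigr];
\]
summing against $d_A$ and regrouping by $B$ converts each level $s = |B|$ into an expression weighted by $\deg_\cH B \le \Delta_s(\cH)$. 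Pinsker-type bounds on the joint moment $|\Ex_\mu[\prod_{v\in B}(Y_v - q_v)]|$ in terms of $\DKL$ restricted to the coordinates $B$, combined with the co-degree hypothesis $\Delta_s(\cH) \le K(\lambda p)^{s-1}e(\cH)/v(\cH)$, should then yield an estimate of the form (correction) $\le F(\DKL(\mu \,\|\, \bar\mu),\lambda,r,K)\cdot \Ex_\nu[X]$; taking $\lambda$ to be a small polynomial in $\eps$, $K^{-1}$, $r^{-1}$ makes $F$ small enough to close the argument.

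Should this one-shot approach turn out to be too lossy at intermediate seed sizes, the backup plan is a peeling scheme in the spirit of the hypergraph container lemma: iteratively identify a small set $B \subseteq V$ on which $\Pr_\mu(B \subseteq R)$ is anomalously large and condition further on $\{Y_v = 0 : v \in B\}$, paying an entropy cost $O(|B|\log\tfrac{1}{1-p_0})$ absorbed into $C$; the residual hypergraph $\cH - B$ inherits the degree hypothesis up to harmless factors and the Pythagorean identity is re-applied on $V \setminus B$ with updated marginals. The principal obstacle in either route will be quantitative: translating the max-degree assumption into a controlled suppression of $|\Ex_\mu[\prod_{v \in B}(Y_v - q_v)]|$ without incurring exponential losses in $r$. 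The $\eps^9$ scaling advertised in the remark suggests that several square-root losses (Pinsker, Cauchy--Schwarz, and a final balancing between the two terms of the Pythagorean identity) are composed, so the most delicate combinatorics will lie in stratifying the argument by seed size $s \in \br{r}$ and carefully matching the slack at each level; once executed, this exactly accounts for the multiplicative factor $(1-\eps)$ in the conclusion.
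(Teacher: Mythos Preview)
Your one-shot approach has a genuine gap: the unconditional marginals $q_v = \Ex_\mu[Y_v]$ need not be even approximately feasible for $\Phi_p^\cH(\eta+\eps)$, and there is no mechanism by which a large $\DKL(\mu\,\|\,\bar\mu)$ compensates for this. Take the emblematic case $\eta=0$, triangles in $G(n,\tfrac12)$. The conditioned law $\mu$ is essentially a \emph{mixture} over balanced bipartitions; by symmetry every edge has the same marginal $q_e\approx\tfrac14$, so $\Ex_{\bar\mu}[X]\approx\binom{n}{3}(\tfrac14)^3$, which is order $n^3$ --- not $\le \eps\,\Ex_\nu[X]$ for small $\eps$. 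Worse, $\sum_e i_{1/2}(q_e)\approx\binom{n}{2}i_{1/2}(\tfrac14)$ is a constant factor \emph{smaller} than $\Phi_n(0)\approx\tfrac14\binom{n}{2}\log 2$. No Pinsker-type bound on centred moments can repair this: the correlation correction is of order $\Ex_\nu[X]$ while $\DKL(\mu\,\|\,\bar\mu)$ is only of order $n^2$, and you have not proposed any way to turn excess $\DKL(\mu\,\|\,\bar\mu)$ into a \emph{different} feasible product law. The marginals of a mixture simply lose the structure of each component.

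The paper's remedy is precisely to undo the mixing: it finds a small set $W\subseteq V$ (with $|W|\le\alpha|V|$) and works with the \emph{conditional} marginals $q_v^W\coloneqq\Ex[Y_v\mid (Y_w)_{w\in W}]$. The key technical device is not Pinsker but a Janson-flavoured strengthening (Lemma~\ref{lemma:main}) that lower-bounds $I_p(Y_v\mid Z)-I_p(Y_v)$ by a sum over a family of $Z$-measurable events minus their pairwise overlaps; crucially, it exploits the Harris/FKG fact that $\Ex_W[Y_A]\le p^{|A|}$ to gain a factor $1/p$ over raw Pinsker. The set $W$ is built iteratively by adding random chunks $W'$: either the current error $\Err(W)$ (a Cauchy--Schwarz square of your correlation correction) is already small, or the key lemma forces $H(W\cup W')\ge H(W)+\gamma p|V|$, and the latter can happen only $O_{\eps,K,p_0}(1)$ times since $H(W)\le |V|p/(1-p_0)$. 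Your backup plan gestures at iteration but gets the mechanics wrong: one conditions on the \emph{values} $(Y_w)_{w\in W}$, not on $\{Y_v=0\}$; the resulting cost is not ``$O(|B|\log\tfrac{1}{1-p_0})$ absorbed into $C$'' (indeed $|W|$ is a small constant times $|V|$, not $O(1)$) but is instead handled by setting $q_v^W=p$ on $W$ and bounding the lost edges via $\Delta_1(\cH)$; and the trigger for further conditioning is the size of $\Err(W)$, not any single set $B$ with anomalously large $\Pr_\mu(B\subseteq R)$.
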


When $f$ is a linear function, the variable $X$ from the statement of the theorem is a sum of independent random variables. In this case, our argument can be simplified tremendously. The special case $f(y) = y_1 + \dotsb + y_n$, which corresponds to the binomial distribution, is treated in \S\ref{sec:interlude}, where a short, entropy-based proof of the optimal tail estimate
\[
  \Pr\big(\Bin(n,p) \le nq\big) \le \exp\big(-n \cdot i_p(q)\big)
\]
is given.

\subsection{Lower bounds on the lower tail}
\label{sec:lower-bound-lower}

We end the results section with a lower bound on the lower tail probabilities from the statements of Theorems~\ref{thm:main-hypergraphs} and~\ref{thm:main} that matches the upper bounds proved by these theorems. Since the proof of this lower bound is a relatively standard tilting argument, we relegate it to \S\ref{sec:lower}. Here is the exact formulation (in the language of Theorem~\ref{thm:main}).
\begin{thm}
  \label{thm:lower-bound}
  For every $p_0 < 1$ and $\eps > 0$, there exists a $C$ such that the following holds. Let $V$ be a finite set, let $Y = (Y_v)_{v \in V}$ be a sequence of i.i.d.\ $\Ber(p)$ random variables, let $f \colon \{0,1\}^V \to [0, \infty)$ be an arbitrary increasing function, and let $X \coloneqq f(Y)$. Then, for every nonnegative real $\eta$,
  \[
    - \log \Pr\big(X \le \eta \Ex[X] \big) \le (1+\eps) \Phi_p^f\big((1-\eps)\eta\big) + C.
  \]
\end{thm}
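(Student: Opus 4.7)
The plan is a standard change-of-measure (tilting) argument. Let $q^* \in [0,1]^V$ be a minimiser in the definition of $\Phi_p^f((1-\eps)\eta)$ and let $Q^*$ denote the product measure $\bigotimes_{v \in V} \Ber(q^*_v)$, so that $\DKL(Q^* \| P) = \Phi_p^f((1-\eps)\eta)$ and $\Ex_{Q^*}[X] \le (1-\eps)\eta \Ex[X]$. Because $f$ is increasing, we may assume without loss of generality that $q^*_v \le p$ for every $v$: indeed, any minimiser must satisfy this, since otherwise lowering $q^*_v$ to $p$ strictly decreases the KL cost while preserving the mean constraint. Set $A \coloneqq \{X \le \eta \Ex[X]\}$. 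Since $f \ge 0$, Markov's inequality under $Q^*$ gives $Q^*(A) \ge 1-(1-\eps) = \eps$.

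Next, Jensen's inequality applied to the concave function $\log$ yields the standard bound
\[
  -\log \Pr(A) \le -\log Q^*(A) + \sum_{v \in V} \Bigl[\tilde q_v \log\frac{q^*_v}{p} + (1-\tilde q_v)\log\frac{1-q^*_v}{1-p}\Bigr],
\]
where $\tilde q_v \coloneqq \Pr_{Q^*}(Y_v = 1 \mid A)$. Since $A$ is a decreasing event and $Q^*$ is a product measure, the FKG inequality gives $\tilde q_v \le q^*_v$. After a brief algebraic rearrangement, the sum equals $\sum_v [i_p(\tilde q_v) - i_{q^*_v}(\tilde q_v)]$, which is close to $\DKL(Q^* \| P)$ precisely when the conditional marginals $\tilde q_v$ are close to the unconditional marginals $q^*_v$.

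The main obstacle will be closing the gap between the crude factor $1/\eps$ coming from $Q^*(A) \ge \eps$ and the claimed factor $1+\eps$. I expect to handle this by splitting into two regimes. When $\Phi_p^f((1-\eps)\eta)$ is bounded by a constant depending only on $\eps$ and $p_0$, the inequality is absorbed into the additive $C$ and there is nothing to prove. In the opposite regime, the extremal tilt $q^*$ must deviate from $p$ on many coordinates, and one should be able to show, via an FKG or second-moment concentration estimate for the increasing function $f$ under the product measure $Q^*$, that $Q^*(X > \eta \Ex[X])$ is in fact much smaller than the Markov bound $1-\eps$. Once $Q^*(A) = 1-o(1)$, the factor $1/Q^*(A)$ becomes $1+o(1)$ and $\tilde q_v$ is forced close to $q^*_v$, so the entire right-hand side collapses to $(1+\eps)\Phi_p^f((1-\eps)\eta) + C$. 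Establishing the requisite concentration uniformly over arbitrary increasing $f$ is the part I expect to require the most care.
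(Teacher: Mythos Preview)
Your setup is correct and matches the paper's: take a minimiser $q^*$ (which indeed has $q^*_v\le p$ because $f$ is increasing), let $Q^*$ be the corresponding product measure, and note that Markov gives $Q^*(A)\ge\eps$. Your Jensen identity
\[
  -\log\Pr(A)\le -\log Q^*(A)+\Ex_{Q^*}\bigl[J\mid A\bigr],
  \qquad
  J(y)\coloneqq\sum_v\Bigl[y_v\log\tfrac{q^*_v}{p}+(1-y_v)\log\tfrac{1-q^*_v}{1-p}\Bigr],
\]
is also correct. The gap is in the mechanism you propose for upgrading the crude $Q^*(A)\ge\eps$ to something sharper. You suggest proving a concentration estimate for $f$ under $Q^*$ so that $Q^*(A)=1-o(1)$ once $\Phi$ is large. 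This cannot work for arbitrary increasing $f$. Take $V=\br{n}$, $p\in(0,p_0]$ fixed, and $f(y)=\max_v y_v$. The event $A$ is $\{\text{all }y_v=0\}$, and a short Lagrange-multiplier computation shows that the minimiser is symmetric, $q^*_v=q$ with $(1-q)^n=1-(1-\eps)\eta$. Then $\Phi=n\,i_p(q)\asymp n$ is as large as one likes, yet $Q^*(A)=(1-q)^n=1-(1-\eps)\eta$, which stays bounded away from $1$ uniformly in $n$ (and equals $\eps$ when $\eta=1$). So no amount of ``$\Phi$ large'' forces $Q^*(A)\to 1$, and there is no second-moment or FKG concentration available for $f$ in general.

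The fix, which is exactly what the paper does, is to concentrate \emph{the log-likelihood $J(Y')$}, not $f$. Unlike $f$, the random variable $J(Y')$ under $Q^*$ is a sum of independent terms $X_v$ with $\Ex[X_v]=i_p(q^*_v)$, and a short calculus lemma (Lemma~\ref{lem:calc Var} in the paper) gives $\Var(X_v)\le K\,i_p(q^*_v)$ for a constant $K=K(p_0)$. Hence $\Ex_{Q^*}[J]=\Phi$ and $\Var_{Q^*}(J)\le K\Phi$, so Chebyshev yields $Q^*\bigl(J>(1+\eps)\Phi+C'\bigr)\le\eps/2$ for $C'=K/(2\eps^2)$. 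Intersecting $A$ with $\cY_2\coloneqq\{J\le(1+\eps)\Phi+C'\}$ gives $Q^*(A\cap\cY_2)\ge\eps/2$, and on $\cY_2$ the likelihood ratio $dQ^*/dP=e^{J}$ is at most $e^{(1+\eps)\Phi+C'}$, whence $\Pr(A)\ge\Pr(A\cap\cY_2)\ge(\eps/2)\,e^{-(1+\eps)\Phi-C'}$. (Equivalently, your Jensen route can be closed by the same variance bound: $\Ex_{Q^*}[J\mid A]-\Phi=-\Ex_{Q^*}[(J-\Phi)\1_{A^c}]/Q^*(A)\le\sqrt{K\Phi}/\eps\le\eps\Phi+K/(4\eps^3)$.) Either way, the key missing ingredient is the observation that the relevant concentration is for $J$, which is always a well-behaved sum of independents, rather than for $f$, which need not concentrate at all.
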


\subsection{Organisation of the paper}
\label{sec:organisation-paper}

The remainder of this paper is organised as follows. In Section~\ref{sec:proof-outline}, we outline of the proof of Theorem~\ref{thm:triangles} and discuss some of the additional ideas required in the proof of Theorem~\ref{thm:graphs} in the case $H = K_3$. In Section~\ref{sec:triangles}, which is merely three pages long, we present a complete proof of Theorem~\ref{thm:triangles}. In Section~\ref{sec:DKL}, we recall some basic properties of the Kullback--Leibler divergence and prove the key technical lemma (Lemma~\ref{lemma:main}) that relates independence and conditional KL-divergence. Subsection~\ref{sec:interlude} contains a short entropy-based proof of optimal tail bounds for binomial distributions, which might be of independent interest. Our main technical result, Theorem~\ref{thm:main-hypergraphs}, is proved in Section~\ref{sec:upper-bounds-lower-tail}. The matching lower bound for lower tail probabilities, Theorem~\ref{thm:lower-bound}, is proved in Section~\ref{sec:lower}. Finally, Section~\ref{sec:applications} contains short derivations of Theorems~\ref{thm:graphs}, \ref{thm:hypergraphs}, and~\ref{thm:APs}.

\section{Proof outline}
\label{sec:proof-outline}

\subsection{Triangle count in \texorpdfstring{$G(n,\frac{1}{2})$}{G(n,1/2)}}
\label{sec:triangles-Gn12}

Let us first explain how to prove Theorem~\ref{thm:triangles}, i.e., the upper bound on the lower tail of the number of triangles. Considering only $G(n, \tfrac{1}{2})$, which is the \emph{uniform} distribution on $n$-vertex graphs, allows us to phrase the argument in the familiar language of \emph{entropy} rather than using the Kullback--Leibler divergence. The argument sketched here is described in full in \S\ref{sec:triangles} and takes no more than three pages. 

Let $Y$ be the random graph $G(n,\frac12)$ \emph{conditioned} on having at most $t$ triangles. Then
\begin{equation}
  \label{eq:logPr-entropy-sketch}
  \log\Pr(X\le t)=H(Y)-\binom{n}{2}\log 2,
\end{equation}
where $H$ is the entropy of $Y$. Examine the distribution of the first edge (i.e., of $Y_{12}$) under the conditioning.\footnote{We assume that the vertex set of $G(n, \frac{1}{2})$ is $\br{n}$ and think of $Y \in \{0,1\}^{\binom{\br{n}}{2}}$ as the characteristic vector of the edge set of the conditioned random graph.} For every integer $r \ge 0$, let
\[
  h_r\coloneqq H(Y_{12} \mid \textrm{edges with at least one endpoint larger than $n-r$}),
\]
where $H(\cdot \mid \cdot)$ is the usual conditional entropy. Since conditional entropy is nonnegative and it decreases as one increases the conditioning, we have $0 \le h_{r+1}\le h_r$ for every $r$.  Since $h_0 = H(Y_{12}) \le \log 2$, there must be some $r \le \sqrt{n}$ such that $h_r-h_{r+1}\le C/\sqrt{n}$, where $C$ is an absolute constant.

Denote by $S_r$ the edges from the definition of $h_r$, so that $h_r=H(Y_{12} \mid S_r)$.  Since both $\{1, n-r\}$ and $\{2, n-r\}$ belong to $S_{r+1} \setminus S_r$, we may use the monotonicity of conditional entropy again to sandwich $H(Y_{12} \mid S_r, Y_{1,n-r}, Y_{2,n-r})$ between $h_r$ and $h_{r+1}$:
\[
  h_{r+1}=H(Y_{12} \mid S_{r+1})\le H(Y_{12} \mid Y_{1,{n-r}},Y_{2,n-r},S_r)\le H(Y_{12} \mid S_r)=h_r.
\]
Hence, we also get the inequality
\[
  H(Y_{12} \mid S_r)-H(Y_{12} \mid Y_{1,{n-r}},Y_{2,n-r},S_r)\le C/\sqrt{n}.
\]
By symmetry, we may replace $(1,2,n-r)$ in the above inequality with any three different elements $(i,j,k)$ of $\br{n-r}$ and get
\[
  H(Y_{ij} \mid S_r)-H(Y_{ij} \mid Y_{ik},Y_{jk},S_r)\le C/\sqrt{n}.
\]
We now apply \emph{Pinsker's inequality}, which states that, for any two variables $T$ and $U$, if $H(T)-H(T \mid U)$ is small, then $T$ and $U$ must be approximately independent.  We apply this to the variables $Y_{ij}$ conditioned on $S_r$ to conclude that, conditioned on $S_r$, the three edges of every triangle are (typically) approximately independent.

Recall now the definition of $\Phi_n(t)$. It is the minimum of $-H\big(G(n,q)\big)+\binom{n}{2}\log 2$ over all functions $q \colon \binom{\br{n}}{2} \to [0,1]$ such that
\[
  T(q) \coloneqq \Ex\big[\textrm{\#triangles in } G(n,q)\big] \le t.
\]
Consider the function $q_{ij}\coloneqq \Ex[Y_{ij} \mid S_r]$. The approximate independence of the $Y_{ij}$ gives that $T(q)$ is (typically) approximately the expected number of triangles in $Y$, which is at most $t$, by the definition of $Y$. Hence, $T(q)\le t+o(t)$, where the $o(t)$ error term comes from the fact that the $Y_{ij}$ are only approximately independent. We conclude that
\[
  H\big((Y_{ij})_{i,j\le n-r} \mid S_r\big) \le \sum_{i,j \le n-r} H(Y_{ij} \mid S_r) \le -\Phi_n(t+o(t))+\binom{n}2\log 2.
\]
Since $S_r$ has only at most $n^{3/2}$ edges, its entropy is negligible and we get
\[
  H(Y) = H(S_r) + H\big((Y_{ij})_{i,j\le n-r} \mid S_r\big) \le (1-o(1)) \cdot \Phi_n(t+o(t))+\binom{n}2\log 2,
\]
 as needed.

Examining the proof above, we see that the crucial step is that of proving \emph{conditional approximate independence}. Why was the conditioning necessary? Because $Y$ is not close to a product measure but rather a mixture of product measures. Heuristically, the conditioning chooses one product measure from the mixture.

\subsection{Triangle count in \texorpdfstring{$G(n,p)$}{G(n,p)} and beyond}
\label{sec:triangles-Gnp}

What is needed to prove Theorem~\ref{thm:triangles} with $G(n, \frac{1}{2})$ replaced by $G(n,p)$?
Since the latter is no longer a uniform distribution, in order to phrase a suitable analogue of~\eqref{eq:logPr-entropy-sketch}, we certainly have to replace entropy with entropy relative to a product of $p$-Bernoulli variables (relative entropy is also called the Kullback--Leibler divergence, though note that the sign of the Kullback--Leibler divergence is minus that of what a straightforward analogue of entropy would have) and we need an analogue of Pinsker's inequality for (conditional) relative entropy. These two ideas would have been enough to solve the lower tail (as well as the upper tail) problem for triangles in $G(n,p)$ for all $p \ge n^{-c}$, where $c$ is an absolute positive constant.

In order to extend the argument to all $p \gg n^{-1/2}$, one needs to prove a version of Pinsker's inequality that provides a stronger upper bound on the difference of probabilities that two measures assign to \emph{rare} events (rather than arbitrary events, as measured by the total variation distance). Furthermore, in order to use this strengthening of Pinsker's inequality, we also need to note that, when we condition our random graph on the lower tail event, the probability of every edge is at most $p$, even when we further condition on $S_r$. This follows from the Harris inequality (aka the FKG inequality). The use of Harris's inequality is the main (but not the only) reason why our methods are not as efficient for the upper tail problem.

The general setting of Theorem~\ref{thm:main-hypergraphs}, which lacks symmetry, requires a serious overhaul of the argument.  (Having said that, even in the setting of $K_4$ counts in $G(n,p)$, which still has a lot of symmetry, the argument sketched above does not work under the optimal assumption $p \gg n^{-2/5}$.)  We no longer increase the conditioning in small steps (recall the definition of $h_r$ above) but rather in large chunks, which are chosen randomly.  The crux of the matter is relating the decrease in entropy caused by conditioning on each such random chunk to approximate independence of the remaining variables.  Here, the key role is played by Lemma~\ref{lemma:main}, an improvement of Pinsker's inequality that is inspired by the statement of Janson's inequality~\cite{Jan90}.

\section{The lower tail of triangle count in \texorpdfstring{$G(n,\frac{1}{2})$}{G(n,1/2)}}
\label{sec:triangles}

As explained above, our proof of Theorem~\ref{thm:triangles} revolves around (information-theoretic) entropy.  For convenience
of the reader, we shall recall here the definitions of entropy and conditional entropy and list all of their
properties required for our argument; for proofs of these properties, we refer the reader to~\cite[Chapter~2]{CovTho06}.

\subsection{Preliminaries}
\label{sec:preliminaries-entropy}

The entropy of a random variable $X$ taking values in a finite set $\cX$ is the quantity $H(X)$
defined by
\[
  H(X) \coloneqq - \sum_{x \in \cX} \Pr(X = x) \log \Pr(X=x).
\]
Further, given two random variables $X$ and $Y$ that take values in finite sets $\cX$ and $\cY$, respectively,
and have a joint distribution, the (conditional) entropy of $X$ conditioned on $Y$ is the quantity $H(X \mid Y)$ defined as follows:
\[
  H(X \mid Y) \coloneqq \sum_{y \in \cY} \Pr(Y = y) H\big(X^{\{Y=y\}}\big),
\]
where $X^{\{Y=y\}}$ denotes $X$ conditioned on the event that $Y=y$, so that, for every $x \in \cX$
and every $y \in \cY$ with $\Pr(Y=y) \neq 0$,
\[
  \Pr\big(X^{\{Y=y\}} = x\big) = \frac{\Pr(X = x,\, Y=y)}{\Pr(Y=y)}.
\]
The above definitions ensure that entropies and conditional entropies are always nonnegative.  Moreover,
it is easy to verify that
\begin{equation}
  \label{eq:chain-rule}
  H(X \mid Y) = H(X, Y) - H(Y).
\end{equation}

In the remainder of this section, a discrete random variable will mean a random variable taking values
in some finite set.  The following elementary inequalities should be familiar to readers who have
encountered the notion of entropy.
\begin{lemma}
  \label{lem:entropy-basic}
  Suppose that $X$, $Y$, and $Z$ are discrete random variables and that $X$ takes values
  in a finite set $\cX$. We have:
  \begin{enumerate}[label=(\roman*)]
  \item
    \label{item:entropy-uniform}
    $H(X) \le \log|\cX|$ and equality holds iff $X$ is uniform on $\cX$;
  \item
    \label{item:entropy-subadditivity}
    $H(X \mid Y) \le H(X)$ and equality holds iff $X$ and $Y$ are independent;
  \item
    \label{item:entropy-more-conditioning}
    $H(X \mid Y,Z) \le H(X \mid Y)$;
  \item
    \label{item:conditional-entropy-subadditivity}
    $H(X,Y \mid Z) \le H(X \mid Z) + H(Y \mid Z)$.
  \end{enumerate}
\end{lemma}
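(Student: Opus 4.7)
The plan is to derive all four parts from two foundational ingredients: Jensen's inequality applied to $\log$ (equivalently, nonnegativity of the Kullback--Leibler divergence, sometimes called Gibbs' inequality) together with the chain rule~\eqref{eq:chain-rule}. Parts~\ref{item:entropy-uniform} and~\ref{item:entropy-subadditivity} will be handled by direct Jensen arguments, and then~\ref{item:entropy-more-conditioning} will follow from~\ref{item:entropy-subadditivity} fibrewise, while~\ref{item:conditional-entropy-subadditivity} will follow from~\ref{item:entropy-more-conditioning} via a conditional chain rule.

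For part~\ref{item:entropy-uniform}, I would rewrite $H(X) = \sum_x \Pr(X=x)\log\bigl(1/\Pr(X=x)\bigr)$ as an expectation and apply Jensen to the concave function $\log$ to obtain $H(X) \le \log|\mathrm{supp}(X)| \le \log|\cX|$. Strict concavity of $\log$ forces $\Pr(X=x)$ to be constant on the support, and equality in the second bound forces full support, which jointly give the uniformity characterisation.

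For part~\ref{item:entropy-subadditivity}, a direct expansion using~\eqref{eq:chain-rule} shows that the gap equals the mutual information
\[
H(X) - H(X \mid Y) = \sum_{x,y} \Pr(X=x, Y=y) \log \frac{\Pr(X=x, Y=y)}{\Pr(X=x)\Pr(Y=y)},
\]
which is the KL divergence of the joint law from the product of its marginals. Applying Jensen to the convex function $t \log t$ shows this quantity is nonnegative, with equality iff the joint equals the product of the marginals, i.e.\ iff $X$ and $Y$ are independent. For part~\ref{item:entropy-more-conditioning}, I would apply~\ref{item:entropy-subadditivity} inside each fibre $\{Y=y\}$ to the conditional variables $X^{\{Y=y\}}$ and $Z^{\{Y=y\}}$ and then average the resulting inequalities over $y$ weighted by $\Pr(Y=y)$; this directly yields $H(X \mid Y, Z) \le H(X \mid Y)$. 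Finally, part~\ref{item:conditional-entropy-subadditivity} is obtained by combining the conditional version of~\eqref{eq:chain-rule}, namely $H(X, Y \mid Z) = H(X \mid Z) + H(Y \mid X, Z)$, with the bound $H(Y \mid X, Z) \le H(Y \mid Z)$ provided by~\ref{item:entropy-more-conditioning}.

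There is no genuine obstacle here, as these are textbook facts about Shannon entropy; the only care needed is to adopt the standard convention $0 \log 0 = 0$ and to interpret conditional entropies on zero-probability events as vacuous, which is consistent with the definition given in the preliminaries.
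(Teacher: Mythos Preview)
Your proposal is correct and follows the standard textbook route. The paper itself does not prove this lemma at all: it simply states the four properties and refers the reader to \cite[Chapter~2]{CovTho06} for proofs, so your argument in fact supplies more detail than the paper does.
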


The main ingredient in our proof is Pinsker's inequality (see \cite[Problem~3.18]{CsiKor11}), which, in our context, can be viewed as
a~`stability' version of~\ref{item:entropy-subadditivity} in Lemma~\ref{lem:entropy-basic}.
The statement requires the following notation: For two random variables $X$ and $Y$, we denote by
$X \times Y$ the random variable obtained by first letting $\tilde{X}$ and $\tilde{Y}$ be two
independent copies of $X$ and $Y$, respectively, and then defining $X \times Y = (\tilde{X}, \tilde{Y})$. In other words,
$\cL(X \times Y) = \cL(X) \times \cL(Y)$, where, as usual, $\cL(X)$ stands for the law of $X$, i.e., the measure induced by $X$ on its space of values.

\begin{lemma}
  \label{lem:Pinsker}
  Suppose that $X$ and $Y$ are discrete random variables. We have
  \[
    \dtv\big((X,Y), X \times Y\big) \le \sqrt{2 \big(H(X) - H(X \mid Y)\big)},
  \]
  where $\dtv$ denotes the total variation distance.
\end{lemma}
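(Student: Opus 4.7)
The plan is to recognize the difference $H(X) - H(X\mid Y)$ as the mutual information $I(X;Y) = \DKL(\cL(X,Y) \,\|\, \cL(X)\times\cL(Y))$ and then apply the classical Pinsker inequality $\dtv(P,Q) \le \sqrt{\DKL(P\|Q)/2}$ with $P = \cL(X,Y)$ and $Q = \cL(X)\times\cL(Y)$. The constant claimed in Lemma~\ref{lem:Pinsker} is a factor of four weaker inside the square root than the sharp Pinsker constant, so this reduction leaves ample slack (the discrepancy is a harmless matter of normalisation of $\dtv$).

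To verify the mutual-information identity, I would apply the chain rule~\eqref{eq:chain-rule} twice to write $H(X) - H(X\mid Y) = H(X) + H(Y) - H(X,Y)$, and then directly expand
\[
    \DKL\big(\cL(X,Y) \,\|\, \cL(X)\times\cL(Y)\big) = \sum_{x,y}\Pr(X=x,Y=y)\log\frac{\Pr(X=x,Y=y)}{\Pr(X=x)\Pr(Y=y)},
\]
which, upon splitting the logarithm into three pieces, telescopes to $-H(X,Y) + H(X) + H(Y)$.

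For the classical Pinsker inequality itself, I would reduce to the two-point case. For any event $A$, set $p = P(A)$ and $q = Q(A)$, and invoke the data-processing inequality for $\DKL$ applied to the coarsening $\{A, A^c\}$ to get
\[
    d(p \| q) \coloneqq p\log\tfrac{p}{q} + (1-p)\log\tfrac{1-p}{1-q} \;\le\; \DKL(P\|Q).
\]
Taking the supremum over $A$, it then suffices to establish the scalar bound $|p - q| \le \sqrt{d(p\|q)/2}$, which is a one-variable calculus exercise: the function $\psi(p) \coloneqq d(p\|q) - 2(p-q)^2$ satisfies $\psi(q) = 0$, $\psi'(q) = 0$, and $\psi''(p) = \frac{1}{p(1-p)} - 4 \ge 0$ on $(0,1)$, so $\psi \ge 0$.

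The only bit of care required is to get the direction of the KL divergence right: the joint law must sit on the left and the product of marginals on the right, which is exactly the direction produced by the mutual-information identity and exactly the direction Pinsker needs in order to control $\dtv$. Beyond this bookkeeping there is no substantive obstacle; the statement is classical and the proof is essentially an accounting of entropies followed by an invocation of textbook Pinsker.
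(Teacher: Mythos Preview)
Your proposal is correct. The paper does not actually prove Lemma~\ref{lem:Pinsker}; it simply cites it as a known result (referring the reader to \cite[Problem~3.18]{CsiKor11}), so there is no in-paper argument to compare against. What you have written is precisely the standard derivation: identify $H(X)-H(X\mid Y)$ with the mutual information $\DKL\big(\cL(X,Y)\,\|\,\cL(X)\times\cL(Y)\big)$ and then invoke the classical Pinsker inequality, whose two-point reduction via data processing and the convexity check $\psi''(p)=\tfrac{1}{p(1-p)}-4\ge 0$ you carry out correctly. Your remark about the factor of four of slack inside the square root is also right and harmlessly accounts for whichever normalisation of $\dtv$ one adopts.
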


\subsection{The argument}
\label{sec:argument-triangles}

Let $Y$ denote the random graph $G(n, \frac{1}{2})$ conditioned on having at most
$t$ triangles.  In other words, $Y$ is a uniformly chosen
random graph with vertex set $\br{n} \coloneqq \{1, \dotsc, n\}$ and at most
$t$ triangles.  In particular, Lemma~\ref{lem:entropy-basic}\ref{item:entropy-uniform} implies that
\begin{equation}
  \label{eq:logPr-entropy}
  \log \Pr(X_n \le t) = H(Y) - \binom{n}{2} \log 2.
\end{equation}

In order to bound the entropy of $Y$ from above, it will be convenient to view $Y$ as the random vector
$(Y_e)_{e \in K_n}$, where $Y_e$ indicates whether $e$ is an edge of $Y$.  For a subvector $S$ of $Y$
and every $e \in K_n$, we will write $Y_e^S$ to denote the random variable whose (random) distribution
is the distribution of $Y_e$ conditioned on $S$, so that $\Pr(Y_e^S = 1) = \Ex[Y_e \mid S]$. The following
lemma captures the notion of \emph{conditional approximate independence} (recall the proof sketch in \S\ref{sec:triangles-Gn12}).

\begin{lemma}
  \label{claim:triangles}
  There exists a subgraph $F \subseteq K_n$ with at most $n^{3/2}$ edges and such that, for every
  $\{i, j, k\} \in \binom{\br{n}}{3}$, letting $S \coloneqq (Y_f)_{f \in F}$ and
  \[
    d_{ijk}^S \coloneqq \dtv\big((Y_{ij}^S, Y_{ik}^S, Y_{jk}^S), Y_{ij}^S \times Y_{ik}^S \times Y_{jk}^S\big),
  \]
  we have $\Ex[d_{ijk}^S] \le 2n^{-1/4}$.
\end{lemma}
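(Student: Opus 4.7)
The plan is threefold: identify a good conditioning set $F$ by pigeonholing over a monotone family of edge sets, upgrade the pigeonhole conclusion to all triples using the symmetry of the law of $Y$ under permutations of $\br{n}$, and finally convert the resulting small conditional entropy gap into a small total-variation distance via Pinsker's inequality.

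For each $r \in \{0, 1, \dotsc, \lfloor n^{1/2}\rfloor\}$, let $A_r \coloneqq \{n-r+1, \dotsc, n\}$, let $F_r \subseteq \binom{\br{n}}{2}$ be the set of edges meeting $A_r$, and set $S_r \coloneqq (Y_f)_{f \in F_r}$, so that $F_0 \subseteq F_1 \subseteq \dotsb$ and $|F_r| \le rn$. By Lemma~\ref{lem:entropy-basic}\ref{item:entropy-more-conditioning} the numbers $h_r \coloneqq H(Y_{12} \mid S_r)$ form a nonincreasing sequence in $[0, \log 2]$, so by pigeonhole some $r^* \le \sqrt{n}$ satisfies $h_{r^*} - h_{r^*+1} = O(n^{-1/2})$. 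I take $F \coloneqq F_{r^*}$ and $S \coloneqq S_{r^*}$, which gives $|F| \le n^{3/2}$. Since $F_{r^*+1} \setminus F_{r^*}$ is precisely the star at $n - r^*$ inside $\br{n-r^*}$, appending $Y_{1, n-r^*}$ and $Y_{2, n-r^*}$ to the conditioning in $h_{r^*}$ produces a value sandwiched between $h_{r^*+1}$ and $h_{r^*}$, whence
\[
  H(Y_{12} \mid S) - H(Y_{12} \mid Y_{1, n-r^*}, Y_{2, n-r^*}, S) \le h_{r^*} - h_{r^*+1} = O(n^{-1/2}).
\]

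To transfer this bound to an arbitrary triple $\{i, j, k\} \subseteq \br{n-r^*}$, observe that the law of $Y$ is invariant under permutations of $\br{n}$, and any permutation $\sigma$ fixing $A_{r^*}$ pointwise permutes the coordinates of $S$ among themselves. Choosing such a $\sigma$ sending $\{1, 2, n-r^*\}$ onto $\{i, j, k\}$ in a suitable way yields
\[
  H(Y_{ij} \mid S) - H(Y_{ij} \mid Y_{ik}, Y_{jk}, S) = O(n^{-1/2}),
\]
and the analogous bound holds with $ik$ or $jk$ in the role of $ij$. For triples $\{i, j, k\}$ meeting $A_{r^*}$, a short case check shows that at least two of the edges $ij, ik, jk$ lie in $F$, hence at least two of $Y_{ij}, Y_{ik}, Y_{jk}$ are determined by $S$, so the joint coincides with the product and $d_{ijk}^S = 0$ deterministically.

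For a triple inside $\br{n-r^*}$, the triangle inequality for $\dtv$ combined with two applications of Pinsker (Lemma~\ref{lem:Pinsker}) gives, pointwise in $S$,
\[
  d_{ijk}^S \le \sqrt{2\bigl(H(Y_{ij}^S) - H(Y_{ij}^S \mid Y_{ik}^S, Y_{jk}^S)\bigr)} + \sqrt{2\bigl(H(Y_{ik}^S) - H(Y_{ik}^S \mid Y_{jk}^S)\bigr)},
\]
and the second gap is bounded above by $H(Y_{ik}^S) - H(Y_{ik}^S \mid Y_{ij}^S, Y_{jk}^S)$ by monotonicity of conditional entropy. Taking $\Ex_S$, pulling the expectation inside each square root by Jensen's inequality, and inserting the entropy bounds from the previous paragraph produces $\Ex[d_{ijk}^S] = O(n^{-1/4})$; a careful tuning of the pigeonhole range and of the Pinsker constants recovers the stated $2 n^{-1/4}$. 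The only genuinely subtle point is the symmetry argument of the second paragraph, which relies on the joint law of $S$ together with the remaining edges being invariant under permutations fixing $A_{r^*}$ pointwise; the rest is a routine combination of entropy monotonicity and Pinsker.
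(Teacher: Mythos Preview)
Your proposal is correct and follows essentially the same approach as the paper: the pigeonhole over the nested sets $F_r$, the symmetry transfer via permutations fixing $A_{r^*}$, the trivial case for triples meeting $A_{r^*}$, and the triangle-inequality-plus-Pinsker-plus-Jensen step are all exactly as in the paper's proof. The only cosmetic difference is in the second Pinsker term---the paper first bounds $\dtv((B,C),B\times C)\le \dtv((A,B,C),(A,B)\times C)$ and then applies Pinsker, whereas you apply Pinsker directly to $(B,C)$ and then use monotonicity of conditional entropy---but both routes yield the same estimate, and your remark about tuning constants correctly anticipates that the sharp Pinsker constant $\sqrt{1/2}$ (which the paper in fact uses in its computation) is needed to land on $2n^{-1/4}$.
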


\begin{proof}
  For a nonnegative integer $r$, let $F_r$ be the subgraph of $K_n$ comprising all edges $\{i,j\}$
  satisfying $\max\{i,j\}  > n-r$, let $S_r \coloneqq (Y_f)_{f \in F_r}$ and let $h_r \coloneqq H(Y_{12} \mid S_r)$. By Lemma~\ref{lem:entropy-basic}\ref{item:entropy-more-conditioning}, 
  the function $r \mapsto h_r$ is decreasing and hence, for some $r\le\sqrt{n}$, we must have
  \[
  h_r-h_{r+1}\le \frac{h_0-h_{\sqrt{n}}}{\sqrt{n}}.
  \]
  Bounding the numerator is easy. 
  On the one hand, we have
  \[
    h_0 = H(Y_{12}) \le \log 2,
  \]
  as $Y_{12}$ takes only two values, see Lemma~\ref{lem:entropy-basic}\ref{item:entropy-uniform};
  on the other hand, $h_r \ge 0$ for every $r$, as conditional entropy is always nonnegative. 
  Thus, there must be an $r$ with $0 \le r \le \sqrt{n}-1$ such that $h_r - h_{r+1} \le (\log 2)/ \sqrt{n}$.
  Fix one such $r$ and let $F = F_r$ and $S = S_r$; note that $e(F) \le rn \le n^{3/2}$.
  Since $F \subseteq F \cup \big\{\{1, n-r\}, \{2, n-r\}\big\} \subseteq F_{r+1}$, Lemma~\ref{lem:entropy-basic}\ref{item:entropy-more-conditioning} implies that
  \[
    h_{r+1} = H(Y_{12} \mid S_{r+1}) \le H(Y_{12} \mid S, Y_{1,n-r}, Y_{2,n-r}) \le H(Y_{12} \mid S) = h_r
  \]
  and, consequently,
  \begin{equation}
    \label{eq:conditional-entropy-change}
    H(Y_{12} \mid S) - H(Y_{12} \mid S, Y_{1,n-r}, Y_{2,n-r}) \le (\log 2) / \sqrt{n}.
  \end{equation}
  By symmetry (every permutation of $\br{n-r}$ fixes $F$), we may replace the triple of indices $(1, 2, n-r)$
  in~\eqref{eq:conditional-entropy-change} with any ordered triple $(i, j, k)$ of distinct
  elements of $\br{n-r}$. Using the definition of conditional entropy, we may rewrite this upgraded
  inequality as
  \[
    \Ex\Big[\underbrace{H(Y_{ij}^S) - H(Y_{ij}^S \mid Y_{ik}^S, Y_{jk}^S)}_{\lambda_{ijk}^S}\Big]
    \le (\log 2) / \sqrt{n},
  \]
  where $\Ex$ averages over the values of $S$.

  Fix an arbitrary triple $\{i, j, k\} \in \binom{\br{n}}{3}$. If $\max\{i, j, k\} > n-r$, then at least two
  out of the three pairs $ij$, $ik$, $jk$ belong to $F$; consequently, at least two out the three
  corresponding variables $Y_{ij}^S$, $Y_{ik}^S$, $Y_{jk}^S$ are trivial (for every evaluation of $S$),
  which implies that $d_{ijk}^S = 0$.  Therefore, we may assume that $\{i, j, k\} \in \binom{\br{n-r}}{3}$.
  For brevity, denote $A = Y_{ij}^S$, $B = Y_{ik}^S$, and $C = Y_{jk}^S$, so that
  \begin{align*}
      d_{ijk}^S = \dtv\big((A,B,C), A \times B \times C\big)
      & \le \dtv\big((A,B,C), A \times (B,C)\big) + \dtv\big(A \times (B,C), A \times B \times C\big) \\
      & = \underbrace{\dtv\big((A,B,C), A \times (B,C)\big)}_{d_1}
      + \underbrace{\dtv\big((B,C), B \times C\big)}_{d_2}.\\[-1.1cm]
  \end{align*}
  Pinsker's inequality (Lemma~\ref{lem:Pinsker}) implies that
  \[
    d_1 \le \sqrt{\tfrac{1}{2}\big(H(A) - H(A \mid B, C)\big)} = \sqrt{\tfrac{1}{2} \lambda_{ijk}^S}.
  \]
  Further,
  \[
    d_2 \le \dtv\big((A,B,C), (A,B) \times C\big) \le \sqrt{\tfrac{1}{2}\big(H(C) - H(C \mid A, B)\big)} = \sqrt{\tfrac{1}{2}\lambda_{jki}^S}
  \]
  (the first inequality is easy to check). We conclude that
  \[
    \begin{split}
      \Ex\big[d_{ijk}^S\big]
      & \le \Ex\left[\sqrt{\tfrac{1}{2} \lambda_{ijk}^S}\right] +  \Ex\left[\sqrt{\tfrac{1}{2} \lambda_{jki}^S}\right]
      \le \sqrt{\tfrac{1}{2} \Ex[\lambda_{ijk}^S]} + \sqrt{\tfrac{1}{2} \Ex[\lambda_{ijk}^S]} \\
      & \le 2 \sqrt{(\log 2)/(2\sqrt{n})} \le 2n^{-1/4},
  \end{split}
  \]
  where the second inequality follows from the Cauchy--Schwarz inequality.
\end{proof}

Let $F$ be the graph from the statement of Lemma~\ref{claim:triangles} and let $S = (Y_f)_{f \in F}$,
as in the claim.  The chain rule for conditional entropies, identity~\eqref{eq:chain-rule} above,
and Lemma~\ref{lem:entropy-basic}\ref{item:conditional-entropy-subadditivity} imply that
\[
  H(Y) = H(S) + H\big((Y_e)_{e \in K_n \setminus F} \mid S \big) \le H(S) + \sum_{e \in K_n \setminus F} H(Y_e \mid S).
\]
Since $S$ takes at most $2^{e(F)}$ different values and $e(F) \le n^{3/2}$, we further have,
by Lemma~\ref{lem:entropy-basic}\ref{item:entropy-uniform},
\begin{equation}
  \label{eq:HY-subadditivity}
  H(Y) \le n^{3/2} \log 2 + \sum_{e \in K_n} H(Y_e \mid S),
\end{equation}
where we also used the fact that conditional entropies are nonnegative to extend the range
of the sum from $K_n \setminus F$ to $K_n$ (in fact, $H(Y_e \mid S) = 0$ for every $e \in F$).

Recall that our eventual goal is to compare the entropy of $Y$ to $\Phi_n(t)$, which is defined as the minimum over certain functions $q$. Define therefore the $S$-measurable random function $q \colon \binom{\br{n}}{2} \to [0,1]$ by letting,
for each $e \in K_n$,
\[
  q_e \coloneqq \Ex[Y_e \mid S] = \Pr(Y_e^S = 1).
\]
Letting $h \colon [0,1] \to [0, \log 2]$ be the function defined by $h(x) = -x\log x - (1-x)\log(1-x)$, 
we may now write
\[
  H(Y_e \mid S) = \Ex\left[H(Y_e^S)\right] = \Ex[h(q_e)].
\]

Let $X^S$ denote the number of triangles in $Y$ conditioned on $S$, that is,
\[
  X^S \coloneqq \sum_{\{i,j,k\} \in \binom{\br{n}}{3}} Y_{ij}^S Y_{ik}^S Y_{jk}^S
\]
and let
\[
  \bar{X}^S \coloneqq \sum_{\{i,j,k\} \in \binom{\br{n}}{3}} q_{ij} q_{ik} q_{jk} = \Ex\left[N_{K_3}\big(G(n,q)\big)\right].
\]
Recall the definition of $d_{ijk}^S$ from the statement of Lemma~\ref{claim:triangles} and observe that
\begin{equation}
  \label{eq:dtv-triangle-ijk}
  \left| \Ex\big[Y_{ij}^S Y_{ik}^S Y_{jk}^S \big] - q_{ij} q_{ik} q_{jk}\right| \le d_{ijk}^S;
\end{equation}
indeed, the two terms in the left-hand side are the probabilities of the event that $Y_{ij}^S = Y_{ik}^S = Y_{jk}^S=1$ under the two distributions whose total variation distance is $d_{ijk}^S$.

Let
\[
  \Delta = \sum_{\{i,j,k\} \in \binom{\br{n}}{3}} d_{ijk}^S
\]
and note that, by Lemma \ref{claim:triangles},
\begin{equation}
  \label{eq:Ex-Delta-upper}
  \Ex[\Delta] \le \binom{n}{3} \cdot 2n^{-1/4} \le n^{11/4}.
\end{equation}
Summing~\eqref{eq:dtv-triangle-ijk} over all triples $\{i,j,k\}$, we obtain
\[
  \bar{X}^S \le \Ex\big[X^S\big] + \Delta \le t + \Delta,
\]
since $X^S \le t$ with probability one. In particular, the definition of $\Phi_n$, see~\eqref{eq:Phi_n}, implies that
\[
  \sum_{e \in K_n} \big(\log 2 - h(e_q)\big) \ge \Phi_n(t + \Delta).
\]
We may conclude that
\[
  \sum_{e \in K_n} H(Y_e \mid S) = \Ex\left[\sum_{e \in K_n} h(q_e) \right] \le \binom{n}{2} \log 2 - \Ex[\Phi_n(t + \Delta)].
\]
Since $\Phi_n$ is decreasing and nonnegative,
\[
  \Ex[\Phi_n(t + \Delta)] \ge \Pr(\Delta \le n^{23/8}) \cdot \Phi_n(t +n^{23/8}) \stackrel{\eqref{eq:Ex-Delta-upper}}{\ge} \big(1-n^{-1/8}\big) \cdot \Phi_n(t + n^{23/8}).
\]
Recalling~\eqref{eq:logPr-entropy} and~\eqref{eq:HY-subadditivity}, this implies that
\[
  \log\Pr(X_n \le t) \le -(1-n^{-1/8}) \cdot \Phi_n(t+n^{23/8}) + n^{3/2} \le - \Phi_n(t+n^{23/8}) + 2n^{15/8},
\]
as $\Phi_n(t) \le \binom{n}{2} \log 2$ for every $t$. This finishes the proof of Theorem~\ref{thm:triangles}.\qed

\section{The Kullback--Leibler divergence}
\label{sec:DKL}

For the proof of Theorem~\ref{thm:main-hypergraphs}, we need the notion of Kullback--Leibler divergence, or relative entropy. Let $P$ and $Q$ be random variables taking values in a finite set $\cX$ and suppose that $\cL(P) \ll \cL(Q)$, that is, that the distribution of $P$ is absolutely continuous with respect to the distribution of $Q$. Denoting by $p$ and $q$ the densities of $P$ and $Q$, respectively, the \emph{Kullback--Leibler divergence} of $P$ from $Q$ (also known as the \emph{relative entropy}), denoted by $\DKL(P \, \| \, Q)$, is defined as follows:
\[
  \DKL(P \, \| \, Q) \coloneqq \sum_{x \in \cX} p(x) \log \frac{p(x)}{q(x)},
\]
where we adopt the convention that $0 \log \frac{0}{q} = 0$ for all $q$. The assumption that $\cL(P) \ll \cL(Q)$, which is a concise way of saying that $p(x) = 0$ whenever $q(x) = 0$, guarantees that $\DKL(P \, \| \, Q)$ is well-defined. A fundamental property of the KL-divergence is that it is always nonnegative; indeed, since $\log x \le x - 1$ for all positive $x$, we have, letting $\cX' = \{x \in \cX : p(x) > 0\}$,
\[
  \DKL(P \, \| \, Q) = - \sum_{x \in \cX'} p(x) \log\frac{q(x)}{p(x)} \ge \sum_{x \in \cX'} \big( p(x) - q(x) \big) = 1 - \sum_{x \in \cX'} q(x) \ge 0.
\]

One easily checks that, when $Q$ is a uniformly chosen random element of $\cX$, then
\[
  \DKL(P \, \| \, Q) = \log |\cX| - H(P),
\]
where $H(P)$ is the entropy of $P$ (defined in the previous section). In particular, if $P$ is the uniformly chosen random element of a nonempty subset $\cA \subseteq \cX$, then, by Lemma~\ref{lem:entropy-basic}\ref{item:entropy-uniform},
\[
  \DKL(P \, \| \, Q) = \log |\cX| - \log |\cA| = - \log \Pr(Q \in \cA).
\]
The following property of the KL-divergence, which generalises this identity, is the beginning of our approach.

\begin{prop}\label{prop:P=I}
  Suppose that $Q$ is a random variable taking values in a finite set $\cX$. Suppose that $A \subseteq \cX$ satisfies $\Pr(Q \in A) \neq 0$ and let $Q^{A}$ be the random variable $Q$ conditioned on the event $\{Q \in A\}$. Then
  \[
    \DKL(Q^{A} \, \| \, Q) = - \log \Pr(Q \in A).
  \]
\end{prop}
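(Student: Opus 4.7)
The plan is a short direct computation from the definition of Kullback--Leibler divergence, noting that conditioning on $\{Q \in A\}$ simply rescales the density of $Q$ by the constant factor $1/\Pr(Q \in A)$ on the set $A$ and zeroes it out elsewhere. This means the log-ratio of densities is constant on the support of $Q^A$, so the sum defining $\DKL$ collapses.

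Concretely, I would write $q(x) \coloneqq \Pr(Q = x)$ for $x \in \cX$ and let $p$ denote the density of $Q^A$. By the definition of conditional distribution,
\[
p(x) = \begin{cases} q(x)/\Pr(Q \in A) & \text{if } x \in A, \\ 0 & \text{if } x \notin A. \end{cases}
\]
In particular, $p(x) = 0$ whenever $q(x) = 0$, so $\cL(Q^A) \ll \cL(Q)$ and $\DKL(Q^A \,\|\, Q)$ is well-defined. Then I would plug into the definition:
\[
\DKL(Q^A \,\|\, Q) = \sum_{x \in \cX} p(x) \log \frac{p(x)}{q(x)} = \sum_{x \in A} \frac{q(x)}{\Pr(Q \in A)} \log \frac{1}{\Pr(Q \in A)},
\]
where terms with $x \notin A$ vanish by the convention $0 \log 0 = 0$. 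Pulling out the constant $\log(1/\Pr(Q \in A))$ and using $\sum_{x \in A} q(x) = \Pr(Q \in A)$ gives exactly $-\log \Pr(Q \in A)$, as desired.

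There is no real obstacle here: the identity is essentially definitional, and the only care required is to handle the convention $0 \log \frac{0}{q} = 0$ and to verify the absolute continuity hypothesis so that the divergence is well-defined in the first place. The proposition should be viewed as a structural remark that the log-probability of a (possibly atypical) event can be repackaged as a relative entropy between the conditional distribution and the original one, which is the perspective that drives the rest of the paper's entropy-based approach.
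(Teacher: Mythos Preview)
Your proof is correct and follows essentially the same direct computation as the paper's own proof: write down the density of $Q^A$ as the rescaled-and-truncated density of $Q$, plug into the definition of $\DKL$, and use that $\sum_{x \in A} q(x) = \Pr(Q \in A)$. Your additional remarks on absolute continuity and the $0 \log 0$ convention are correct and harmless elaborations.
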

\begin{proof}
  Let $q \colon \cX \to [0,1]$ be the probability density function of $Q$ and note that the probability density function of $Q^{A}$ is the function $q^{A} \colon \cX \to [0,1]$ defined by
  \[
    q^{A}(x) \coloneqq
    \begin{cases}
      \frac{q(x)}{\Pr(Q \in A)} & \text{if $x \in A$}, \\
      0 & \text{otherwise}.
    \end{cases}
  \]
  It follows that
  \[
    \DKL( Q^{A} \, \| \, Q) = \sum_{x \in \cX} q^{A}(x) \log \frac{q^{A}(x)}{q(x)} = \sum_{x \in A} \frac{q(x)}{\Pr(Q \in A)} \log \frac{1}{\Pr(Q \in A)} = \log \frac{1}{\Pr(Q \in A)},
  \]
  as claimed.
\end{proof}

The next property of the KL-divergence is a generalisation of the chain rule for entropies, identity~\eqref{eq:chain-rule}, and Lemma~\ref{lem:entropy-basic}\ref{item:entropy-subadditivity}.  In fact, the equality in~\eqref{eq:DKL-conditional} below is a special case of an even more general identity, the chain rule for relative entropies, see~\cite[Theorem~2.5.3]{CovTho06}.

\begin{prop}
  \label{prop:DKL-properties}
  Let $Q_1$ and $Q_2$ be random variables taking values in finite sets $\cX_1$ and $\cX_2$, respectively. Suppose that $(P_1, P_2)$ is an $\cX_1 \times \cX_2$-valued random variable such that $\cL(P_i) \ll \cL(Q_i)$ for each $i$. Let $Q_1 \times Q_2$ denote a random variable whose independent coordinates have marginals $Q_1$ and $Q_2$, respectively; that is, $\cL(Q_1 \times Q_2) = \cL(Q_1) \times \cL(Q_2)$. Then   \begin{equation}
    \label{eq:DKL-conditional}
    \DKL\big((P_1, P_2) \, \| \, Q_1 \times Q_2\big) - \DKL( P_2 \, \| \, Q_2 )
    \ge \DKL(P_1 \, \| \, Q_1),
  \end{equation}
  where equality holds if and only if $P_1$ and $P_2$ are independent.
\end{prop}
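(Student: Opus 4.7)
The plan is to expand the left-hand KL-divergence by inserting the product of marginal densities of $(P_1, P_2)$ inside the logarithm, which will decompose it into three nonnegative pieces: the two marginal divergences $\DKL(P_i \,\|\, Q_i)$ and the mutual information $I(P_1; P_2)$. The inequality will then be immediate, and its slack will be exactly this mutual information, from which the equality characterisation falls out.

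More concretely, I would let $p_{12}$ denote the joint density of $(P_1, P_2)$, $p_i$ the marginal density of $P_i$, and $q_i$ the density of $Q_i$. The hypothesis $\cL(P_i) \ll \cL(Q_i)$ ensures $\cL(P_1, P_2) \ll \cL(Q_1) \times \cL(Q_2)$, so the left-hand divergence is well-defined. Starting from the pointwise identity
\[
  \log \frac{p_{12}(x_1, x_2)}{q_1(x_1) q_2(x_2)} = \log \frac{p_{12}(x_1, x_2)}{p_1(x_1) p_2(x_2)} + \log \frac{p_1(x_1)}{q_1(x_1)} + \log \frac{p_2(x_2)}{q_2(x_2)},
\]
I would multiply by $p_{12}(x_1, x_2)$ and sum over $\cX_1 \times \cX_2$. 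The first summand then yields the mutual information $I(P_1; P_2) = \DKL\big(\cL(P_1, P_2) \,\|\, \cL(P_1) \times \cL(P_2)\big)$, while the other two summands reduce, after marginalising the unused coordinate, to $\DKL(P_1 \,\|\, Q_1)$ and $\DKL(P_2 \,\|\, Q_2)$ respectively. This gives the identity
\[
  \DKL\big((P_1, P_2) \,\|\, Q_1 \times Q_2\big) = \DKL(P_1 \,\|\, Q_1) + \DKL(P_2 \,\|\, Q_2) + I(P_1; P_2),
\]
and rearranging yields the claimed inequality with slack exactly $I(P_1; P_2)$.

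For the equality case, I would invoke the nonnegativity of KL-divergence (already shown in the text via $\log x \le x - 1$) applied to $I(P_1; P_2)$, noting that the inequality $\log x \le x - 1$ is strict unless $x = 1$, so $I(P_1; P_2) = 0$ iff $p_{12}(x_1, x_2) = p_1(x_1) p_2(x_2)$ for every $(x_1, x_2)$ with $p_{12}(x_1, x_2) > 0$, i.e., iff $P_1$ and $P_2$ are independent. I do not foresee a genuine obstacle here: the only subtlety is bookkeeping at points where some marginal vanishes, but the convention $0 \log 0 = 0$ together with the absolute continuity hypothesis makes every term in the decomposition well-defined and allows the pointwise identity above to be summed without issue.
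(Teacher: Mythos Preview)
Your proposal is correct and is essentially the same argument as the paper's, just organised differently: the paper subtracts $\DKL(P_2\,\|\,Q_2)$ to obtain $L=\sum_{x_1,x_2}p(x_1,x_2)\log\frac{p(x_1,x_2)}{q_1(x_1)p_2(x_2)}$ and then applies the log-sum inequality (their Lemma~\ref{lemma:log-sum}) to the inner sum over $x_2$, whereas you insert $p_1(x_1)p_2(x_2)$ to obtain the exact identity $\DKL((P_1,P_2)\,\|\,Q_1\times Q_2)=\DKL(P_1\,\|\,Q_1)+\DKL(P_2\,\|\,Q_2)+I(P_1;P_2)$ and then invoke nonnegativity of KL. Both routes rest on the same convexity step, and your version has the minor advantage of naming the slack explicitly as the mutual information, which makes the equality case immediate.
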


The proof of the proposition employs the following elementary inequality, whose proof we include for the sake of completeness.

\begin{lemma}
  \label{lemma:log-sum}
  Suppose that $I$ is a finite set and, for each $i \in I$, let $a_i$ and $b_i$ be nonnegative reals such that $a_i = 0$ whenever $b_i = 0$. Then, letting $a = \sum_{i \in I} a_i$ and $b = \sum_{i \in I} b_i$, we have
  \[
    \sum_{i \in I} a_i \log \frac{a_i}{b_i} \ge a \log \frac{a}{b}.  
  \]
  Moreover, equality holds above if and only if $a_ib = ab_i$ for every $i \in I$.
\end{lemma}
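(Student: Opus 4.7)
The proof plan is to reduce the inequality to a single application of Jensen's inequality for the convex function $f(x) = x \log x$ on $[0, \infty)$ (extended continuously by $f(0) = 0$).

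First I would dispose of the degenerate terms. By hypothesis, whenever $b_i = 0$ we also have $a_i = 0$, so with the convention $0 \log(0/0) = 0$, such indices contribute $0$ to the left-hand side and also to $a$ and $b$; removing them changes neither side of the inequality. The case $a = 0$ is trivial since then $a_i = 0$ for all $i$ and both sides equal $0$ (with the convention $0 \log 0 = 0$ on the right). So I may henceforth assume $b_i > 0$ for every $i \in I$ and $a > 0$.

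The main step is then Jensen's inequality applied to $f(x) = x \log x$, which is strictly convex on $[0, \infty)$, with weights $\lambda_i \coloneqq b_i/b$ (nonnegative and summing to $1$) and points $x_i \coloneqq a_i/b_i$. Since
\[
\sum_{i \in I} \lambda_i x_i = \frac{1}{b}\sum_{i \in I} a_i = \frac{a}{b},
\]
convexity gives
\[
\sum_{i \in I} \frac{b_i}{b} \cdot \frac{a_i}{b_i} \log\frac{a_i}{b_i} \ge \frac{a}{b} \log \frac{a}{b},
\]
and multiplying through by $b$ yields the desired bound.

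For the equality case, strict convexity of $f$ on $(0, \infty)$ ensures that equality in Jensen's inequality holds precisely when the points $x_i = a_i/b_i$ are constant across all $i$ with $\lambda_i > 0$, i.e., across all $i \in I$ (after our reduction). Calling this common value $c$, summing $a_i = c b_i$ over $i$ gives $a = cb$, hence $c = a/b$, which is exactly the stated condition $a_i b = a b_i$ for every $i$. Conversely, this condition immediately forces equality in the Jensen step.

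There is no serious obstacle here: the only care needed is in bookkeeping the edge cases $b_i = 0$, $a_i = 0$, and $a = 0$ so that the conventions $0 \log 0 = 0$ and $0 \log (0/0) = 0$ make the identity of left- and right-hand sides legitimate before invoking Jensen.
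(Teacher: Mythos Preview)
Your proof is correct and follows essentially the same approach as the paper: a direct application of Jensen's inequality to a strictly convex function, with equality characterised by strict convexity. The only cosmetic difference is the choice of convex function and weights---you use $x \mapsto x\log x$ with weights $b_i/b$ at the points $a_i/b_i$, whereas the paper uses $x \mapsto -\log x$ with weights $a_i/a$ at the points $ab_i/(a_ib)$---but the argument and its equality analysis are otherwise identical.
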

\begin{proof}
  Without loss of generality, we may assume that $a_i > 0$ (and thus $b_i > 0$) for each $i \in I$.
  Since the function $x \mapsto -\log x$ is strictly convex, Jensen's inequality implies that
  \[
  \sum_{i \in I} a_i \log \frac{a_i}{b_i} - a \log \frac{a}{b}
  = a \cdot \sum_{i \in I} \frac{a_i}{a} \cdot \Big(-\log \frac{a b_i}{a_ib}\Big)
  \ge -a \cdot \log \Big(\sum_{i \in I} \frac{a_i}{a} \cdot \frac{ab_i}{a_ib}\Big)
  = -a \cdot \log 1 = 0
  \]
  and the inequality is strict unless $\frac{ab_i}{a_ib} = \sum_{j \in I} \frac{a_j}{a} \cdot \frac{ab_j}{a_jb}=  1$  for every $i \in I$, as claimed.
\end{proof}

\begin{proof}[{Proof of Proposition~\ref{prop:DKL-properties}}]
  Let $p \colon \cX_1 \times \cX_2 \to [0,1]$ be the probability density function of $(P_1, P_2)$ and, for each $i \in \{1, 2\}$, let $q_i \colon \cX_i \to [0,1]$ be the probability density function of $Q_i$. Without loss of generality, we may assume that $q_i(x_i) > 0$ for every $i \in \{1,2\}$ and each $x_i \in \cX_i$. The functions $p_1 \colon \cX_1 \to [0,1]$ and $p_2 \colon \cX_2 \to [0,1]$ defined by
  \[
    p_1(x_1) \coloneqq \sum_{x_2 \in \cX_2} p(x_1, x_2)
    \qquad \text{and} \qquad
    p_2(x_2) \coloneqq \sum_{x_1 \in \cX_1} p(x_1, x_2)
  \]
  are the probability density functions of $P_1$ and $P_2$, respectively. Now, denoting by $L$ the left-hand side of~\eqref{eq:DKL-conditional}, we have
  \begin{align*}
    L &= \sum_{(x_1,x_2)\in\cX_1\times\cX_2}p(x_1,x_2)\log\frac{p(x_1,x_2)}{q_1(x_1)q_2(x_2)}
    - \sum_{x_2\in\cX_2}p_2(x_2)\log\frac{p_2(x_2)}{q_2(x_2)} \\
      &\stackrel{\mathclap{(*)}}{=}
        \sum_{x_1\in\cX_1}\sum_{x_2\in\cX_2}p(x_1,x_2)
        \log\frac{p(x_1,x_2)}{q_1(x_1)p_2(x_2)} \\
      &\stackrel{\mathclap{(\dagger)}}{\ge}
        \sum_{x_1\in\cX_1}\bigg(\sum_{x_2\in\cX_2}p(x_1,x_2)\bigg)
        \log\frac{\sum_{x_2 \in \cX_2} p(x_1, x_2)}{\sum_{x_2 \in \cX_2} q_1(x_1)p_2(x_2)} \\
      & = \sum_{x_1 \in \cX_1} p_1(x_1) \log \frac{p_1(x_1)}{q_1(x_1)} = \DKL(P_1 \, \| \, Q_1),
  \end{align*}
  where $(*)$ follows by applying $p_2(x_2)=\sum_{x_1 \in \cX_1}p(x_1,x_2)$ to the second sum and $(\dagger)$ follows by applying Lemma~\ref{lemma:log-sum} to the inner sum.
\end{proof}

\subsection{Divergence from a vector of i.i.d.\ Bernoulli variables}

Throughout this paper, we shall be estimating divergences of random variables from vectors of independent $\Ber(p)$ random variables. In view of this, it will be convenient for us to define, for a real $p \in (0,1)$, an integer $k \ge 1$, and a random variable $X$ taking values in $\{0, 1\}^k$, the \emph{$p$-divergence} $I_p(X)$ of $X$ by
\begin{equation}
  \label{eq:Ip-def-nonneg}
  I_p(X) \coloneqq \DKL\big( X \, \| \, \Ber(p)^k\big) \ge 0.
\end{equation}
When $X$ is Bernoulli itself, say with parameter $q$, then $I_p(X)$ is a function of $q$ which we will denote by $i_p$. Namely,
\begin{equation}\label{eq:defi}
  i_p(q) \coloneqq I_p\big(\Ber(q)\big) = \DKL\big(\Ber(q) \, \| \, \Ber(p)\big) = q \log\frac{q}{p} + (1-q) \log \frac{1-q}{1-p}.
\end{equation}
Let us record here, for future reference, that, for every $q \in (0,1)$,
\begin{equation}
  \label{eq:ip-derivatives}
  i_p'(q) = \log \frac{q}{p} - \log \frac{1-q}{1-p} \qquad \text{and} \qquad i_p''(q) = \frac{1}{q} + \frac{1}{1-q}.
\end{equation}
We also define a notion of conditional divergence. Given random variables $X$ and $Y$ that have a joint distribution and such that $X$ takes values in $\{0,1\}^k$ for some integer $k \ge 1$, we define the \emph{conditional $p$-divergence} of $X$ conditioned on $Y$
\[
  I_p(X \mid Y) \coloneqq \Ex\left[I_p\big(X^Y\big)\right] = \Ex\left[ \DKL\big(X^Y \, \| \, \Ber(p)^k\big) \right],
\]
where $X^Y$ denotes the random variable $X$ conditioned on $Y$, cf.\ the definition of conditional entropy.

It is straightforward to verify that, when $X$ takes values in $\{0,1\}^k$,
\begin{equation}
  \label{eq:divergence-entropy}
  I_{1/2}(X) = k \log 2 - H(X)
  \qquad
  \text{and}
  \qquad
  I_{1/2}(X \mid Y) = k\log 2 - H(X \mid Y)
\end{equation}
and therefore it should not come at a surprise that the divergence and the conditional divergence defined above
satisfy similar inequalities as entropy and conditional entropy, such as the ones presented in Lemma~\ref{lem:entropy-basic}, only in reverse.
In particular, Proposition~\ref{prop:DKL-properties} implies that\footnote{In order to see this, observe first that $I_p(X \mid Y) = \DKL\big((X,Y) \, \| \, \Ber(p)^k \times Y\big)$.}
\begin{equation}
  \label{eq:p-divergence-conditioning}
  I_p(X \mid Y) \ge I_p(X)
\end{equation}
and equality holds if and only if $X$ and $Y$ are independent, cf.~Lemma~\ref{lem:entropy-basic}\ref{item:entropy-subadditivity}; moreover, if $Y$ also takes values in $\{0,1\}^\ell$ for some integer $\ell$, then
\begin{equation}
  \label{eq:p-divergence-subadditivity}
  I_p(X,Y) = I_p(X \mid Y) + I_p(Y) \ge I_p(X) + I_p(Y),
\end{equation}
where, again, equality holds if and only if $X$ and $Y$ are independent, cf.~the chain rule for entropies (identity~\eqref{eq:chain-rule}). Generalising this further, if $Z$ is another random variable (defined on the same probability space as $X$ and~$Y$), then invoking the above inequality with $X$ and $Y$ replaced by $X^Z$ and $Y^Z$ and taking the expectation of both sides yields
\begin{equation}
  \label{eq:conditional-p-divergence-subadditivity}
  I_p(X,Y \mid Z) \ge I_p(X \mid Z) + I_p(Y \mid Z),
\end{equation}
cf.~Lemma~\ref{lem:entropy-basic}\ref{item:conditional-entropy-subadditivity}. One final property that we shall require is the following fact.
\begin{prop}
  \label{prop:p-divergence-double-conditioning}
  Suppose that random variables $X$, $Y$, and $Z$ have a joint  distribution and that $X$ takes values in $\{0,1\}^k$ for some integer $k \ge 1$. Then, for every $p \in (0,1)$,
  \[
    I_p(X \mid Y, Z) = \Ex\left[I_p(X^Y \mid Z^Y)\right]
  \]
\end{prop}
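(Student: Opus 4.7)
The plan is to simply unfold the definition of (conditional) $p$-divergence on both sides and then to invoke the tower property of conditional expectation. The key conceptual point, which drives the whole argument, is the following distributional identity: for any particular values $y$ and $z$ of $Y$ and $Z$ (with positive probability),
\[
  \bigl(X^Y\bigr)^{Z^Y = z}\text{ evaluated at }Y=y \;=\; X^{(Y,Z) = (y,z)},
\]
that is, conditioning $X$ first on $\{Y=y\}$ and then further on $\{Z=z\}$ yields the same law as conditioning $X$ in one step on $\{Y=y,Z=z\}$. This is immediate from the definition of conditional probability.

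With this identity in hand, I would compute as follows. By definition of conditional $p$-divergence,
\[
  I_p(X\mid Y,Z) \;=\; \Ex\Bigl[I_p\bigl(X^{(Y,Z)}\bigr)\Bigr],
\]
where the expectation is taken over the joint law of $(Y,Z)$. On the other side, applying the definition twice,
\[
  \Ex\bigl[I_p(X^Y \mid Z^Y)\bigr]
  \;=\; \Ex_Y\Bigl[\Ex_{Z^Y}\bigl[I_p\bigl((X^Y)^{Z^Y}\bigr)\bigr]\Bigr].
\]
The inner expectation runs over the law of $Z$ conditioned on the fixed value of $Y$, and the random variable whose $p$-divergence is being taken is, by the distributional identity above, precisely $X^{(Y,Z)}$ evaluated at the corresponding $(y,z)$. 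Thus the right-hand side becomes
\[
  \Ex_Y\Bigl[\Ex_{Z\mid Y}\bigl[I_p\bigl(X^{(Y,Z)}\bigr)\bigr]\Bigr],
\]
and applying the tower property (law of total expectation) collapses the iterated expectation into a single expectation over the joint law of $(Y,Z)$, yielding $I_p(X\mid Y,Z)$.

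There is essentially no obstacle here: the statement is a conditioning analogue of the tower property, and once one is comfortable keeping track of which variable is being conditioned on at each stage, the proof reduces to rewriting the definitions. The only point requiring a small amount of care is bookkeeping of the measure with respect to which the outer and inner expectations are taken, so that the identification of $(X^Y)^{Z^Y}$ with $X^{(Y,Z)}$ followed by the tower property is unambiguous.
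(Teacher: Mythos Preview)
Your proposal is correct and follows essentially the same approach as the paper: both arguments rest on the distributional identity $\cL\bigl((X^Y)^{Z^Y}\bigr) = \cL\bigl(X^{(Y,Z)}\bigr)$ and then apply the tower property to collapse the iterated expectation. The paper's proof simply states this identity and concludes in one line, whereas you have spelled out the bookkeeping more explicitly.
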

\begin{proof}
  The assertion follows from the definition of conditional $p$-divergence and the fact that
  \[
    \cL\big(X^{(Y,Z)}\big) = \cL\big((X^Y)^{Z^Y}\big)
  \]
  almost surely. 
\end{proof}

\subsection{Interlude}
\label{sec:interlude}

As an illustration of the subadditivity property of the divergence $I_p$, we will give a short proof of optimal
tail estimates for the binomial distribution (see~\cite{Csi84} for generalisations).

\begin{thm}
  For every positive integer $n$, every $p \in (0,1)$, and all $q \in [0,p]$,
  \[
    \Pr\big(\Bin(n,p) \le nq\big) \le \exp\big(-n \cdot i_p(q)\big) = \exp\left(- n \cdot \DKL\big(\Ber(q) \, \| \, \Ber(p)\big)\right).
  \]
\end{thm}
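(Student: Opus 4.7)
The plan is to deploy the divergence-based framework the paper has just developed: condition on the tail event, identify the negative log-probability with a $p$-divergence via Proposition~\ref{prop:P=I}, then peel off the dependence using the subadditivity inequality~\eqref{eq:p-divergence-subadditivity}, and finally clean up with Jensen's inequality and monotonicity of $i_p$.

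Concretely, let $Y = (Y_1, \ldots, Y_n)$ be i.i.d.\ $\Ber(p)$, write $S = Y_1 + \cdots + Y_n$, and let $A = \{S \le nq\}$. Assume $\Pr(A) > 0$ (otherwise the bound is trivial). First I would apply Proposition~\ref{prop:P=I} with $Q = Y$ and the event $A$ to obtain
\[
  -\log \Pr\bigl(\Bin(n,p) \le nq\bigr) = \DKL(Y^A \,\|\, Y) = I_p(Y^A),
\]
where the second equality uses the definition~\eqref{eq:Ip-def-nonneg} and the fact that $Y$ has law $\Ber(p)^n$.

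Next I would iterate the subadditivity bound~\eqref{eq:p-divergence-subadditivity} to split off each coordinate:
\[
  I_p(Y^A) \ge \sum_{i=1}^n I_p(Y_i^A) = \sum_{i=1}^n i_p(q_i),
\]
where $q_i \coloneqq \Ex[Y_i \mid A] = \Pr(Y_i^A = 1)$, so that $Y_i^A \sim \Ber(q_i)$ and hence $I_p(Y_i^A) = i_p(q_i)$ by~\eqref{eq:defi}. The formula $i_p''(q) = \tfrac{1}{q} + \tfrac{1}{1-q} > 0$ from~\eqref{eq:ip-derivatives} tells us that $i_p$ is strictly convex, so Jensen's inequality gives
\[
  \sum_{i=1}^n i_p(q_i) \ge n \cdot i_p(\bar q), \qquad \bar q \coloneqq \frac{1}{n}\sum_{i=1}^n q_i = \frac{\Ex[S \mid A]}{n}.
\]
Finally, since $S \le nq$ on the event $A$, we have $\bar q \le q \le p$; and, from~\eqref{eq:ip-derivatives}, $i_p'(q) = \log\frac{q}{p} - \log\frac{1-q}{1-p}$ is negative for $q < p$, so $i_p$ is decreasing on $[0,p]$ and thus $i_p(\bar q) \ge i_p(q)$. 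Chaining everything yields $-\log\Pr(\Bin(n,p) \le nq) \ge n \cdot i_p(q)$, as desired.

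There is no real obstacle here --- the argument is essentially an exercise in the machinery just introduced. The only step that requires any care is noting that the conditional marginal mean $\bar q$ lies on the correct side of $p$ so that the monotonicity of $i_p$ works in our favor; this is guaranteed by the hypothesis $q \le p$ together with the inequality $\Ex[S \mid A] \le nq$ which holds pointwise on $A$. The beauty of the approach is that subadditivity of $I_p$ replaces the usual Chernoff/exponential-moment computation, making the sharp constant $n \cdot i_p(q)$ appear automatically.
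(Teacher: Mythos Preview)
Your proof is correct and follows essentially the same route as the paper: identify $-\log\Pr(A)$ with $I_p(Y^A)$ via Proposition~\ref{prop:P=I}, split by subadditivity~\eqref{eq:p-divergence-subadditivity}, and use that $i_p$ is decreasing on $[0,p]$. The only cosmetic difference is that the paper exploits symmetry to observe that all the $q_i$ are equal (to $\bar q$), so each $q_i \le q$ directly and Jensen's inequality is not needed; your use of Jensen is a harmless substitute that would also work in non-symmetric situations.
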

\begin{proof}
  Let $Y = (Y_1, \dotsc, Y_n)$ be a sequence of i.i.d.\ $\Ber(p)$ random variables, let $\cA$ denote the event that
  $Y_1 + \dotsb + Y_n \le nq$, and let $Y' = (Y_1', \dotsc, Y_n')$ be $Y$ conditioned on $\cA$.
  By Proposition~\ref{prop:DKL-properties},
  \[
    - \log \Pr\big(\Bin(n,p) \le nq \big) = - \log \Pr(\cA) = \DKL(Y' \,\|\, Y) = I_p(Y') \stackrel{\eqref{eq:p-divergence-subadditivity}}{\ge} \sum_{k=1}^n I_p(Y_k').
  \]
  By symmetry, for every $k \in \br{n}$,
  \[
    \Ex[Y_k'] = \frac{1}{n} \sum_{j=1}^n \Ex[Y_j'] \le q.
  \]
  In particular, since $i_p$ is decreasing on $[0,p]$ and $q\le p$, we have
  \[
    I_p(Y_k') = i_p\big(\Ex[Y_k']\big) \ge i_p(q),
  \]
  which concludes the proof of the theorem.
\end{proof}

\subsection{The key lemma}
\label{sec:key-lemma}

The following is our key lemma. Its role in the proof of Theorem~\ref{thm:main-hypergraphs} will be analogous to the role that Pinsker's inequality (Lemma~\ref{lem:Pinsker}) played in the proof of Theorem~\ref{thm:triangles}.

\begin{lemma}
  \label{lemma:main}
  Let $Y$ be a $\{0,1\}$-valued random variable and let $E_1, \dotsc, E_m$ be a sequence of $Z$-measurable events, for some random variable $Z$. Suppose that $\Ex[Y \mid Z] \le p'$ for some $p' > 0$. Then, letting $\mu = \Ex[Y] = \Pr(Y=1)$,
  \begin{equation}
    \label{eq:lemma-main}
    I_p(Y \mid Z) - I_p(Y) \ge \frac{1}{2p'} \sum_{i=1}^m \big(\Pr(Y=1 \mid E_i) - \mu\big)^2\Pr(E_i) - \frac{p'}{2} \sum_{1 \le i < j \le m}\Pr(E_i \cap E_j).
  \end{equation}
\end{lemma}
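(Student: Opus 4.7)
The plan is to bound $I_p(Y \mid Z) - I_p(Y)$ from below in two steps: first by a second moment of the conditional probability $q(Z) \coloneqq \Pr(Y = 1 \mid Z)$, and then to convert that second moment into the event-indexed expression appearing in \eqref{eq:lemma-main} via a Bonferroni-style argument. For the first step, observe that since both $Y$ and $Y^Z$ are Bernoulli, $I_p(Y \mid Z) - I_p(Y) = \Ex[i_p(q(Z))] - i_p(\mu)$. The formula \eqref{eq:ip-derivatives} gives $i_p''(x) = \tfrac{1}{x} + \tfrac{1}{1-x} \ge \tfrac{1}{p'}$ for every $x \in (0, p']$ (one may assume $p' \le 1$, as the conclusion is otherwise vacuous after replacing $p'$ by $1$), and the hypothesis $\Ex[Y \mid Z] \le p'$ forces both $q(Z)$ and $\mu$ to lie in $[0, p']$ almost surely. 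Taylor's theorem with quadratic remainder on this interval then yields $i_p(q) \ge i_p(\mu) + i_p'(\mu)(q - \mu) + \tfrac{1}{2p'}(q - \mu)^2$; taking expectations and using $\Ex[q(Z)] = \mu$ to kill the linear term,
\[
I_p(Y \mid Z) - I_p(Y) \ge \frac{1}{2p'}\Ex\big[(q(Z) - \mu)^2\big].
\]

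For the second step, set $X \coloneqq q(Z) - \mu$, so that $\Ex[X] = 0$ and $|X| \le p'$ pointwise (since both $q(Z)$ and $\mu$ lie in $[0, p']$). Because each $E_i$ is $Z$-measurable, the tower property gives
\[
\Ex[X \mathbf{1}_{E_i}] = \Ex\!\big[\mathbf{1}_{E_i}\Ex[Y \mid Z]\big] - \mu \Pr(E_i) = \Pr(E_i)\big(\Pr(Y = 1 \mid E_i) - \mu\big),
\]
and Cauchy--Schwarz applied to the conditional law given $E_i$ upgrades this to $\Ex[X^2 \mathbf{1}_{E_i}] \ge \Pr(E_i)\big(\Pr(Y = 1 \mid E_i) - \mu\big)^2$. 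Summing over $i$, the first sum on the right-hand side of \eqref{eq:lemma-main} is bounded above by $\Ex[X^2 W]$, where $W \coloneqq \sum_i \mathbf{1}_{E_i}$.

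The final step is a cleanup that exchanges $\Ex[X^2 W]$ for $\Ex[X^2]$. The pointwise Bonferroni-style inequality $W \le \mathbf{1}_{\bigcup_i E_i} + \sum_{i < j} \mathbf{1}_{E_i \cap E_j}$ (because $k \le 1 + \binom{k}{2}$ whenever exactly $k$ events occur), combined with $\mathbf{1}_{\bigcup_i E_i} \le 1$ and the crude $X^2 \le (p')^2$, yields
\[
\Ex[X^2 W] \le \Ex[X^2] + (p')^2 \sum_{i < j} \Pr(E_i \cap E_j).
\]
Chaining this with the two earlier displays proves \eqref{eq:lemma-main}. The main piece of bookkeeping to get right will be matching the factor $\tfrac{1}{2p'}$ from the convexity step with the crude bound $|X| \le p'$ used in the Bonferroni cleanup, which conveniently multiply to produce precisely the advertised error term $\tfrac{p'}{2} \sum_{i < j} \Pr(E_i \cap E_j)$; the role of $Z$-measurability of the $E_i$ is exactly to make $\Pr(Y = 1 \mid E_i)$ accessible via the tower property in Stage 2.
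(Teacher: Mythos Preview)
Your proof is correct and follows essentially the same route as the paper's: a Taylor/convexity bound $I_p(Y\mid Z)-I_p(Y)\ge \tfrac{1}{2p'}\Ex[(q(Z)-\mu)^2]$ using $i_p''\ge 1/p'$ on $(0,p']$, then Cauchy--Schwarz on each $E_i$ combined with a Bonferroni inequality and the crude bound $(q(Z)-\mu)^2\le (p')^2$. The only cosmetic difference is that the paper first restricts the second-moment integral to $\bigcup_i E_i$ and applies the inclusion--exclusion lower bound $\mathbf{1}_{\cup E_i}\ge \sum_i\mathbf{1}_{E_i}-\sum_{i<j}\mathbf{1}_{E_i\cap E_j}$, whereas you work with $W=\sum_i\mathbf{1}_{E_i}$ and the equivalent upper bound $W\le \mathbf{1}_{\cup E_i}+\sum_{i<j}\mathbf{1}_{E_i\cap E_j}$; the paper also handles the edge case $\mu=0$ explicitly, which you should mention (the first sum on the right vanishes there, so the inequality is trivial).
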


Let us first show that Lemma~\ref{lemma:main} generalises Pinsker's inequality (Lemma~\ref{lem:Pinsker}) for $\{0,1\}$-valued random variables. More precisely, let $Y \in \{0,1\}$ and $Z$ be two random variables and let $Y \times Z$ be the random variable whose independent coordinates have marginals $Y$ and $Z$.
Let $E_1$ be the $Z$-measurable event $\big\{\Pr(Y = 1 \mid Z) \le \Pr(Y = 1)\big\}$ and let $E_2$ be the complementary event. As $\Pr(Y = 1 \mid Z) - \Pr(Y = 1)$ is nonpositive on $E_1$ (respectively, nonnegative on $E_2$), we have
\[
  d_{TV}\big((Y,Z), Y \times Z\big) = \sum_{i=1}^2 (-1)^i \cdot \big(\Pr(Y = 1 \mid E_i) - \Pr(Y=1)\big) \cdot \Pr(E_i).
\]
In particular, the Cauchy--Schwarz Inequality gives
\[
  d_{TV}\big((Y,Z), Y \times Z\big)^2 \le \left(\sum_{i=1}^2 \big(\Pr(Y = 1 \mid E_i) - \Pr(Y=1)\big)^2 \cdot \Pr(E_i)\right) \cdot \big(\Pr(E_1) + \Pr(E_2)\big).
\]
It thus follows from Lemma~\ref{lemma:main}, invoked with $p = 1/2$ and $p' = 1$, that
\begin{equation}
  \label{eq:lemma-Pinsker}
  d_{TV}\big((Y,Z), Y \times Z\big)^2 \le 2 \cdot \big( I_{1/2}(Y \mid Z) - I_{1/2}(Y) \big) \stackrel{\eqref{eq:divergence-entropy}}{=} 2 \cdot \big( H(Y) - H(Y \mid Z) \big);
\end{equation}
this is precisely Pinsker's inequality (Lemma~\ref{lem:Pinsker}). In the proof of Theorem~\ref{thm:main-hypergraphs}, we will use Lemma~\ref{lemma:main} with $p' = p$, which will result in a much stronger bound.

\begin{proof}[Proof of Lemma~\ref{lemma:main}]
  Observe first that the case $\mu = 0$ is trivial. Indeed, by~\eqref{eq:p-divergence-conditioning}, the left-hand side of~\eqref{eq:lemma-main} is always nonnegative and, when $\mu = 0$, each term in the first sum in the right-hand side of~\eqref{eq:lemma-main} vanishes, as $Y=0$ almost surely. We will thus assume that $\mu > 0$. For the sake of brevity, let $g \coloneqq \Ex[Y \mid Z]$, so that
  \[
    I_p(Y \mid Z) = \Ex\big[I_p(Y^Z)\big] = \Ex[i_p(g)],
  \]
  where $i_p$ is the function defined in~\eqref{eq:defi}. Expanding $i_p$ into a Taylor series
  of order two around $\mu$ with Lagrange remainder gives
  \begin{equation}
    \label{eq:ip-Taylor}
    i_p(g) = i_p(\mu) + i_p'(\mu) \cdot (g-\mu) + i_p''(\xi_g) \cdot \frac{(g-\mu)^2}{2}
  \end{equation}
  for some $\xi_g$ with $0 < \xi_g \le \max\{\mu, g\}$. Recall from~\eqref{eq:defi} and~\eqref{eq:ip-derivatives} that the first term $i_p(\mu)$ is $I_p\big(\Ber(\mu)\big) = I_p(Y)$  and that $i_p''(\xi) = \frac{1}{\xi} + \frac{1}{1-\xi}$.
  When we take expectations (over $Z$) of both sides of~\eqref{eq:ip-Taylor}, the term $i_p'(\mu) \cdot (g - \mu)$ disappears, as $\Ex[g] = \Ex[Y] = \mu$, and thus we end up with
  \[
    I_p(Y \mid Z) - I_p(Y) = \Ex\left[\left(\frac{1}{\xi_g} + \frac{1}{1-\xi_g}\right) \cdot \frac{(g-\mu)^2}{2}\right].
  \]
  Since $\mu, g \le p'$, we have
  \[
    \frac{1}{\xi_g} + \frac{1}{1-\xi_g} \ge \frac{1}{\xi_g} \ge \min\left\{\frac{1}{\mu}, \frac{1}{g}\right\}\ge \frac{1}{p'}
  \]
  and we conclude that
  \begin{equation}
    \label{eq:Ip-diff-lower}
    I_p(Y \mid Z) - I_p(Y) \ge \frac{1}{2p'} \cdot \Ex\left[(g-\mu)^2\right] \ge \frac{1}{2p'} \int_{E_1 \cup \dotsb \cup E_m} (g-\mu)^2 \, d\Pr.
  \end{equation}
  It follows from Bonferroni's inequality (inclusion-exclusion) that
  \[
    \int_{E_1 \cup \dotsb \cup E_m} (g-\mu)^2 \,d\Pr \ge \sum_{i=1}^m \int_{E_i} (g-\mu)^2 \, d\Pr - \sum_{1\le i < j \le m} \int_{E_i \cap E_j} (g-\mu)^2 \,d\Pr.
  \]
  Since $0 \le g, \mu \le p'$, then $(g-\mu)^2 \le (p')^2$. Applying the Cauchy--Schwarz Inequality to each of the terms of the first sum above, we obtain
  \[
    \begin{split}
      \int_{E_1 \cup \dotsb \cup E_m} (g-\mu)^2 \,d\Pr &
      \ge \sum_{i=1}^m \left(\frac{1}{\Pr(E_i)}\int_{E_i} g \, d\Pr-\mu\right)^2\Pr(E_i)
      - \sum_{1\le i < j \le m} \int_{E_i \cap E_j} (p')^2 \,d\Pr \\
      & = \sum_{i=1}^m \big(\Pr(Y=1 \mid E_i) - \mu\big)^2\Pr(E_i)
      - (p')^2 \sum_{1 \le i < j \le m}\Pr(E_i \cap E_j),
    \end{split}
  \]
  which, substituted into~\eqref{eq:Ip-diff-lower}, yields the desired inequality~\eqref{eq:lemma-main}.
\end{proof}

\section{Upper bounds for the lower tail}
\label{sec:upper-bounds-lower-tail}

In this section, we prove Theorem~\ref{thm:main-hypergraphs}. Recall that we are given a hypergraph $\cH$ on a set $V$ and that $R$ denotes a random subset of $V$ where every element is included independently with probability $p$. 

\subsection{First reductions}
\label{sec:outline}

Let $Y=(Y_v)_{v \in V}$ be the indicator of $R$ conditioned on the lower tail event
$e(\cH[R]) \le \eta p^r e(\cH)$. Proposition~\ref{prop:P=I} and the definition of $I_p$ give
\[
  -\log \Pr\big(e(\cH[R])\le\eta p^re(\cH)\big)=I_p(Y),
\]
so from now on $I_p(Y)$ will be our main focus. It will be convenient to define, for every $W \subseteq V$,
\[
  H(W) \coloneqq \sum_{v \in V \setminus W} I_p\big(Y_v \mid (Y_w)_{w \in W}\big).
\]
The point of making this definition is that
\begin{align}\label{eq:19.5}
  \begin{split}
    I_p\big(Y\big) & \stackrel{\eqref{eq:p-divergence-subadditivity}}{=} I_p\big((Y_v)_{v \in V \setminus W} \mid (Y_w)_{w \in W}\big) + I_p\big((Y_w)_{w \in W}\big) \\
  & \stackrel{\eqref{eq:Ip-def-nonneg}}{\ge} I_p\big((Y_v)_{v \in V \setminus W} \mid (Y_w)_{w \in W}\big) \stackrel{\eqref{eq:conditional-p-divergence-subadditivity}}{\ge} \sum_{v \in V \setminus W} I_p\big(Y_v \mid (Y_w)_{w \in W}\big) = H(W),
\end{split}
\end{align}
and thus our goal becomes to find a set $W$ such that $H(W) \ge (1-\eps)\Phi_X(\eta+\eps) - C$.

We will relate $H(W)$ to the quantity $\Phi(\eta+\eps)$ in the following way. First, define the function $f \colon [0,1]^V \to \Reals$ by letting, for each $q \in [0,1]^V$,
\[
  f(q) \coloneqq \sum_{A \in \cH} d_A \prod_{v \in A} q_v.
\]
In other words, $f(q)$ is the expected number of edges of $\cH$ induced by a random subset of $V$ obtained by retaining each $v \in V$ independently with probability $q_v$. Note that $f(Y) = e(\cH[R])$ and that
\[
  \Phi(\eta+\eps) = \min\left\{\sum_{v \in V} i_p(q_v) : q \in [0,1]^V, f(q) \le (\eta+\eps)p^re(\cH)\right\}.
\]
Second, given a $W \subseteq V$, we define a \emph{random} function $q^W \colon V \to [0,1]$ by letting, for each $v \in V$,
\[
  q_v^W \coloneqq
  \begin{cases}
    \Ex\left[Y_v \mid (Y_w)_{w \in W}\right] & \text{if $v \notin W$},\\
    p & \text{otherwise}.
  \end{cases}
\]
Finally, we write
\[
H(W) \stackrel{(*)}{=}
\sum_{v \in V \setminus W} \Ex[i_p(q_v^W)]
= \Ex\left[\sum_{v \in V} i_p(q_v^W) \right]
\stackrel{(\dagger)}{\ge} \Pr\left(f(q^W)
\le (\eta+\eps) p^re(\cH)\right) \cdot \Phi(\eta+\eps),
\]
where $(*)$ follows from the definitions of $H$, $i_p$, and $q^W$; and where $(\dagger)$ uses $i_p \ge 0$ and bounds the expectation from below by the probability of the event $f(q^W)\le (\eta+\eps)p^re(\cH)$ times the minimum of the sum $\sum_{v \in V} i_p(q_v^W)$ on that event. 
In particular, it suffices to produce a set $W$ such that
\begin{equation}
  \label{eq:W-goal}
  \Pr\left( f(q^W) \le (\eta+\eps) p^r e(\cH)\right) \ge 1-\eps.
\end{equation}

Conditioning on $\big(Y_w\big)_{w\in W}$ for various $W \subseteq V$ will repeat so much that it is better to have a shorthand for it. Define therefore
\begin{equation}\label{eq:def EW}
  \Ex_W[\cdot]\coloneqq \Ex\left[\cdot \mid (Y_w)_{w\in W}\right]
\end{equation}
(so that our $q^W$ can now be written as $q_v^W=\Ex_W[Y_v]$ for $v \notin W$).
For similar reasons, given an $A \subseteq V$, define
\[
  Y_A \coloneqq \prod_{a\in A}Y_a.
\]

Since $f(Y) = e(\cH[R]) \le \eta p^r e(\cH)$ almost surely (and, consequently, $\Ex_W[f(Y)] \le \eta p^r e(\cH)$ for every $W \subseteq V$), we may obtain lower bounds on the probability in the left-hand side of~\eqref{eq:W-goal} by bounding from above the right-hand side of the following inequality:
\[
  \left|f(q^W) - \Ex_W[f(Y)]\right|
\le\sum_{A \in \cH} d_A \cdot \bigg| \prod_{a \in A} \Ex_W(Y_a) - \Ex_W\left[Y_A\right] \bigg|.
\]
In order to do so, we will quantify the difference between $\prod_{v \in A} \Ex_W[Y_v]$ and $\Ex_W[\prod_{v \in A} Y_v]$ for a typical $A \in \cH$. This is related to conditioned almost independence of the variables $\{Y_v\}_{v \in A}$. However, we are not studying full independence, but only with respect to the event that all $Y_v$ are $1$.
To continue our analysis, we need a few preliminaries, which will be the topic of the next section.

\subsection{Preliminaries}
\label{sec:preliminaries}

At various places we will need the following corollary of Harris's inequality:
\begin{claim}
  \label{claim:FKG}
  $\Ex_W [Y_A] \le p^{|A|}$ for all $W \subseteq V$ and all $A \subseteq V \setminus W$.
\end{claim}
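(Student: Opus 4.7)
The plan is to unfold the definition of $\Ex_W[Y_A]$ back to a statement about the unconditioned product measure, and then apply Harris's inequality directly. Recall that $Y$ is the indicator vector of $R$ conditioned on the lower tail event $\cA \coloneqq \{e(\cH[R]) \le \eta p^r e(\cH)\}$. So if $Z = (Z_v)_{v \in V}$ denotes a vector of i.i.d.\ $\Ber(p)$ random variables, then for every assignment $y_W \in \{0,1\}^W$,
\[
\Ex_W\bigl[Y_A\bigr]\big|_{Y_W = y_W} \;=\; \Pr\!\big(Z_A = 1 \,\big|\, \cA,\; Z_W = y_W\big) \;=\; \frac{\Pr\!\big(Z_A = 1,\; \cA \,\big|\, Z_W = y_W\big)}{\Pr\!\big(\cA \,\big|\, Z_W = y_W\big)}.
\]
It therefore suffices to show that the numerator is at most $p^{|A|} \cdot \Pr(\cA \mid Z_W = y_W)$.

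For that, I would condition on $Z_W = y_W$ and view everything on the product measure $\Ber(p)^{V \setminus W}$, under which the remaining coordinates $(Z_v)_{v \in V \setminus W}$ are still i.i.d.\ $\Ber(p)$. Under this product measure, the event $\{Z_A = 1\}$ is increasing in the coordinates $(Z_v)_{v \in V \setminus W}$ (here we use the hypothesis $A \subseteq V \setminus W$), whereas the lower tail event $\cA$ is decreasing, since $e(\cH[\cdot])$ is a monotone function of the vertex set and the edge weights $d_A$ are nonnegative, so shrinking $R$ only decreases $e(\cH[R])$. Harris's inequality applied to a product measure then gives the negative correlation
\[
\Pr\!\big(Z_A = 1,\; \cA \,\big|\, Z_W = y_W\big) \;\le\; \Pr\!\big(Z_A = 1 \,\big|\, Z_W = y_W\big) \cdot \Pr\!\big(\cA \,\big|\, Z_W = y_W\big).
\]

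Combining the two displays and using that $\Pr(Z_A = 1 \mid Z_W = y_W) = p^{|A|}$ (since $A$ and $W$ are disjoint and $Z$ is a product measure) yields
\[
\Ex_W[Y_A]\big|_{Y_W = y_W} \;\le\; p^{|A|},
\]
which is the desired bound. There is no real obstacle here; the only point to be careful about is verifying that $\cA$ is genuinely a decreasing event in the i.i.d.\ coordinates (which is immediate from monotonicity of $e(\cH[\cdot])$ and $d \ge 0$) and that the Harris inequality is being applied to the correct product measure, namely the conditional one on $V \setminus W$, not to the original conditioned measure defining $Y$.
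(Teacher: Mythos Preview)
Your proof is correct and follows essentially the same route as the paper's own argument: unfold the conditioning back to the unconditioned product measure, condition on the values $(Z_w)_{w\in W}=y_W$ to obtain a genuine product measure on $V\setminus W$, observe that $\{Z_A=1\}$ is increasing while the lower tail event is decreasing, and apply Harris's inequality there. The paper makes the same observations in the same order, with only cosmetic notational differences.
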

\begin{proof}
  Fix some possible value $y \in \{0,1\}^W$ for $(Y_w)_{w\in W}$. Writing $E$ for the event $A \subseteq R$ and recalling that $Y$ is the indicator of $R$ conditioned on the lower tail event $e(\cH[R]) \le \eta p^r e(\cH)$,
  \begin{align*}
    \Ex\big[Y_A \mid (Y_w)_{w\in W} = y\big]
    & = \frac{\Pr\big(Y_A=1, (Y_w)_{w\in W} = y\big)}{\Pr\big((Y_w)_{w\in W}=y\big)}\\
    & = \frac{\Pr\Big(E, (R_w)_{w\in W} = y, e(\cH[R])\le\eta p^re(\cH)\Big)}
      {\Pr\Big((R_w)_{w\in W}=y, e(\cH[R]) \le\eta p^re(\cH)\Big)}\\
    & = \frac{\Pr\Big(E, e(\cH[R])\le\eta p^re(\cH) \,\Big|\, (R_w)_{w\in W}=y\Big)}
      {\Pr\Big(e(\cH[R])\le\eta p^re(\cH) \mid (R_w)_{w\in W}=y\Big)}
  \end{align*}
  Since the elements of $V$ are included in $R$ independently, conditioning on $(R_w)_{w\in W}$ gives
  a product measure on $(R_v)_{v\in V\setminus W}$. Moreover, under the conditioned measure, the event $E$
  is increasing and the lower tail event $e(\cH[R])\le\eta p^r e(\cH)$ is decreasing. The claim follows from Harris's inequality.
\end{proof}

\begin{claim}
  \label{clm:A prod a}
  For every nonempty, finite set $A$ and every function $F \colon \cP(A) \to\Reals$,
  \[
    F(A)-\prod_{a\in A}F(\{a\})
    =\sum_{\substack{B\subseteq A \\ |B| \ge 2}}\sum_{b\in B} \frac{1}{\binom{|A|}{|B|} |B|} \cdot \big(F(B)-F(B\setminus\{b\})F(\{b\})\big)
    \prod_{a\in A\setminus B}F(\{a\}).
  \]
\end{claim}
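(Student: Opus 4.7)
\emph{Proposal.} The identity is purely algebraic (it makes no use of anything $F$ might represent), and I would prove it by a telescoping-plus-averaging argument. Abbreviate $n = |A|$. For a permutation $\sigma$ of $A$, list the elements as $\sigma(1), \dots, \sigma(n)$ and let $B_k^\sigma \coloneqq \{\sigma(1), \dots, \sigma(k)\}$, so $B_0^\sigma = \emptyset$ and $B_n^\sigma = A$. The plan is first to write a telescoping identity for each fixed $\sigma$, and then average over $\sigma$ to symmetrise and obtain the stated weights.

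For the telescoping step, consider the sequence of values
\[
T_k^\sigma \coloneqq F(B_k^\sigma) \cdot \prod_{i=k+1}^n F(\{\sigma(i)\}), \qquad k = 1, \dots, n.
\]
Then $T_n^\sigma = F(A)$ and $T_1^\sigma = \prod_{a \in A} F(\{a\})$, so
\[
F(A) - \prod_{a \in A} F(\{a\}) = \sum_{k=2}^n (T_k^\sigma - T_{k-1}^\sigma) = \sum_{k=2}^n \bigl[F(B_k^\sigma) - F(B_k^\sigma \setminus \{\sigma(k)\}) F(\{\sigma(k)\})\bigr] \prod_{a \in A \setminus B_k^\sigma} F(\{a\}),
\]
using that $B_{k-1}^\sigma = B_k^\sigma \setminus \{\sigma(k)\}$. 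Note that the left-hand side does not depend on $\sigma$.

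Now average the right-hand side over all $\sigma \in S_A$. For a fixed $B \subseteq A$ with $|B| = k \ge 2$ and a fixed $b \in B$, the number of permutations $\sigma$ for which $B_k^\sigma = B$ and $\sigma(k) = b$ equals the number of ways to order $B \setminus \{b\}$ in positions $1, \dots, k-1$ and to order $A \setminus B$ in positions $k+1, \dots, n$, which is $(k-1)!(n-k)!$. Dividing by $n!$ gives a coefficient
\[
\frac{(k-1)!(n-k)!}{n!} = \frac{1}{\binom{n}{k} \, k},
\]
which is exactly the weight appearing in the statement. Collecting the terms according to the pair $(B, b)$ yields the identity. The only real "work" is this counting step, and once it is in place the identity drops out; there is no genuine analytical obstacle.
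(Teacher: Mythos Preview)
Your proof is correct and takes a genuinely different route from the paper's. The paper verifies the identity by expanding the right-hand side as a linear combination of the terms $K_B \coloneqq F(B)\prod_{a\in A\setminus B}F(\{a\})$ (for $|B|\ge 2$) and $K_\emptyset \coloneqq \prod_{a\in A}F(\{a\})$, and then checks that each $K_B$ with $2\le|B|<|A|$ cancels (positive contributions from the $F(B)$ part balance negative ones from the $F(B'\setminus\{b'\})$ part with $B'\supsetneq B$), while $K_A$ survives with coefficient $1$ and $K_\emptyset$ with coefficient $-1$. Your argument instead \emph{derives} the identity: for each ordering $\sigma$ of $A$ you write a telescoping sum along the chain $\{\sigma(1)\}\subset\{\sigma(1),\sigma(2)\}\subset\dotsb\subset A$, and then average over all $\sigma$; the weight $1/\bigl(\binom{|A|}{|B|}|B|\bigr)$ falls out of the count $(|B|-1)!\,(|A|-|B|)!/|A|!$ of orderings producing a given pair $(B,b)$. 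The paper's approach is a direct verification requiring no setup, but your approach has the advantage of explaining where the specific weights come from and would let one discover the identity rather than merely confirm it.
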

\noindent (As usual, $\cP(A)$ denotes the power set of $A$.)
\begin{proof}
  The identity holds trivially when $|A| = 1$ and we may thus assume that $|A| \ge 2$.
  Observe first that the right-hand side is a linear combination of terms of the form
  \[
    K_{\emptyset} \coloneqq \prod_{a \in A} F(\{a\})
    \qquad
    \text{and}
    \qquad
    K_B \coloneqq F(B)\cdot\prod_{a\in A\setminus B}F(\{a\}),
  \]
  where $B \subseteq A$ satisfies $|B| \ge 2$. The term $K_\emptyset$ appears only when $|B| = 2$ in the outer sum
  and it is easy to verify that its coefficient is
  \[
    - \binom{|A|}{2} \cdot 2 \cdot \frac{1}{\binom{|A|}{2} \cdot 2} = -1.
  \]
  Fix an arbitrary $B \subseteq A$ with $|B| \ge 2$. On the one hand, the term $K_B$ appears with a positive sign exactly $|B|$ times (once for each $b\in B$) and the respective coefficient is
  \[
    \frac{1}{\binom{|A|}{|B|}|B|};
  \]
  on the other hand, it appears with a negative sign ($B$ is then in fact $B \setminus \{b\}$) exactly $|A| - |B|$ times (once for each $b \in A\setminus B$) and the respective coefficient is (note that $|B| \le |A| -1$ in this case)
  \[
    \frac{-1}{\binom{|A|}{|B|+1}(|B|+1)}
  \]
  In particular, when $B \neq A$, then the positive and the negative contributions cancel, as
  \[
    |B| \cdot  \frac{1}{\binom{|A|}{|B|}|B|}= \frac{1}{\binom{|A|}{|B|}} = (|A| - |B|) \cdot \frac{1}{\binom{|A|}{|B|+1}(|B|+1)},
  \]
  and it is easy to check that the sum of the coefficients of $K_A$ is $1$.
\end{proof}

\subsection{The argument}
\label{sec:argument}

Fix an arbitrary nonempty $A \subseteq V \setminus W$. Applying Claim~\ref{clm:A prod a} with $F(B)=\Ex_W[Y_B]$ yields
\[
  \Ex_W[Y_A] - \prod_{a \in A} \Ex_W[Y_a] = \sum_{\substack{B \subseteq A \\ |B| \ge 2}} \sum_{b \in B} \frac{1}{\binom{|A|}{|B|} |B|} \cdot (\underbrace{\Ex_W[Y_B]-\Ex_W[Y_{B\setminus\{b\}}]\Ex_W[Y_b]}_{\Disc_W(B,b)}) \cdot \prod_{a \in A \setminus B} \Ex_W[Y_a]
\]
(this is the definition of $\Disc_W$). Consequently, by the triangle inequality,
\begin{align*}
  \Big| \Ex_W[Y_A] - \prod_{a \in A} \Ex_W[Y_a] \Big|
  &\le \sum_{\substack{B \subseteq A \\ |B| \ge 2}} \sum_{b \in B} \frac{1}{\binom{|A|}{|B|} |B|} \cdot \left|\Disc_W(B,b)\right| \cdot \prod_{a \in A \setminus B} \Ex_W[Y_a]\\
  & \stackrel{\mathclap{(*)}}{\le} \sum_{\substack{B \subseteq A \\ |B| \ge 2}} \sum_{b \in B} \frac{1}{\binom{|A|}{|B|} |B|} \cdot \left|\Disc_W(B,b)\right| \cdot p^{|A|-|B|},
\end{align*}
where (*) follows from Claim~\ref{claim:FKG}. We sum this inequality over all $A\in\cH-W=\cH[V\setminus W]$, take expectation over $\big(Y_v\big)_{v\in W}$, and get (recall that our hypergraph is $r$-uniform, so $|A|=r$ for every $A \in \cH$)
\begin{multline}
  \label{eq:telescope-HW}
  \Ex\left[\sum_{A \in \cH-W} d_A \cdot \Big| \Ex_W[Y_A]
      - \prod_{a \in A} \Ex_W[Y_a] \Big|\right] \\
    \le \Ex\Bigg[\sum_{A\in\cH-W}\sum_{\substack{B\subseteq A\\ |B|\ge 2}}\sum_{b\in B}
  \frac{d_A}{\binom{r}{|B|}|B|} \cdot |\Disc_W(B,b)| \cdot p^{r-|B|}\Bigg].
\end{multline}
We now wish to apply the Cauchy--Schwarz Inequality to the right-hand side of~\eqref{eq:telescope-HW}. However, since the resulting expression would be too long, we first define
\begin{equation}
  \label{eq:Err-W-def}
  \Err(W) \coloneqq \Ex\Bigg[ \sum_{A \in \cH-W} \sum_{\substack{B \subseteq A \\ |B| \ge 2}} \sum_{b \in B} \frac{d_A \cdot \Disc_W(B,b)^2}{\binom{r}{|B|}|B| \cdot p^{2|B|}} \Bigg],
\end{equation}
and then Cauchy--Schwarz yields
\begin{multline}
  \label{eq:key-inequality}
  \Ex\left[\sum_{A \in \cH-W} d_A \cdot \Big| \Ex_W[Y_A] - \prod_{a \in A} \Ex_W[Y_a] \Big|\right]  \\
  \le \bigg(\sum_{A \in \cH - W} \sum_{\substack{B \subseteq A \\ |B| \ge 2}}\sum_{b \in B} \frac{d_Ap^{2r}}{\binom{r}{|B|}|B|}\bigg)^{1/2} \cdot \Err(W)^{1/2} 
  = p^r\big((r-1)e(\cH - W)\big)^{1/2} \cdot \Err(W)^{1/2},
\end{multline}
where we used the identity $\sum_{B,b}\frac{1}{\binom{r}{|B|}|B|}=r-1$, which holds because enumerating over all $B \subseteq A$ of a given size and all $b \in B$ cancels the denominator perfectly. Let us remark that most readers might be better off ignoring all these combinatorial factors.  We chose to estimate them carefully in order to optimise the dependency of $\lambda$ and $C$ (from the statement of the theorem) on $r$. However, in most applications $r$ will be an absolute constant.

The essence of our argument is establishing the following dichotomy: Either
\begin{enumerate}[label=(\roman*)]
\item
  \label{item:dichotomy-1}
  $\Ex[\Err_W]$ is quite small, or
\item
  \label{item:dichotomy-2}
  $H(W \cup W') \ge H(W) + \Omega\big(p |V| \big)$ for some small $W' \subseteq V \setminus W$.
\end{enumerate}
If~\ref{item:dichotomy-1} holds, then, by~\eqref{eq:key-inequality}, we will have that $\Ex_W[Y_A]-\prod_{a \in A} \Ex_W[Y_a]$ is small (on average), and a few simple manipulations (done at the end of the proof of Theorem \ref{thm:main-hypergraphs}, page \pageref{pg:endofproof}) will show that our candidate set $W$ satisfies~\eqref{eq:W-goal}. Otherwise, \ref{item:dichotomy-2} holds and we replace $W$ with $W \cup W'$; this can happen only $O(1)$ times since
\begin{equation}
  \label{eq:HW-upper}
  \begin{split}
    H(W) &\stackrel{\textrm{\clap{\eqref{eq:19.5}}}}{\le} I_p(Y) = -\log \Pr\big(e(\cH[R])\le\eta p^re(\cH)\big) \\
    & \le -\log \Pr(R = \emptyset) = |V| \cdot \log\frac{1}{1-p} \le |V| \cdot \frac{p}{1-p} \le |V| \cdot \frac{p}{1-p_0}.
\end{split}
\end{equation}

\begin{lemma}
  \label{claim:main}
  For all positive $\alpha$, $\beta$, and $K$, there exist $\lambda$ and $V_0$ such that the following holds: If $|V| \ge V_0$ and $\cH$ satisfies~\eqref{eq:half} for every $s \in \br{r}$, then there exists a set $W \subseteq V$ with at most $\alpha |V|$ elements that satisfies
  \[
    \Err(W) \le \beta \cdot e(\cH).
  \]
\end{lemma}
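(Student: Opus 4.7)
The plan is to prove Lemma~\ref{claim:main} by an iterative energy-increment argument on $H(W)$. Initialise $W_0 = \emptyset$ and, as long as $\Err(W_i) > \beta e(\cH)$, augment $W_i$ by a small set $W_i' \subseteq V \setminus W_i$ to form $W_{i+1} = W_i \cup W_i'$. The bound $H(W) \le |V|p/(1-p_0)$ from \eqref{eq:HW-upper} limits the total possible increase of $H$. Hence, if every augmentation raises $H$ by at least $\delta \cdot p|V|$ for some $\delta > 0$ depending only on $\beta, r, K, p_0$, then the process stops after $O(1)$ steps; provided each $|W_i'| \le \lambda|V|$, the final $W$ satisfies $|W| \le \alpha|V|$ for $\lambda$ sufficiently small compared with $\alpha, \beta, r, K, p_0$.

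To produce such a $W'$, I would use the probabilistic method: let $W'$ contain each vertex of $V \setminus W$ independently with probability $q = \lambda$. The expected loss, $\Ex\big[\sum_{v \in W'} I_p(Y_v \mid (Y_w)_{w \in W})\big] = \lambda H(W) \le \lambda |V| p/(1-p_0)$, will be dominated by the expected gain. For the gain: by pigeonhole there is some level $s \in \{2, \dotsc, r\}$ whose contribution to $\Err(W)$ is at least $\beta e(\cH)/(r-1)$, and combining this with $\sum_{A \supseteq B,\, A \in \cH - W} d_A \le \Delta_s(\cH) \le K(\lambda p)^{s-1} e(\cH)/|V|$ yields
\[
  \sum_{|B|=s,\, b \in B} \Ex\big[\Disc_W(B,b)^2\big] \ge \frac{\beta \binom{r}{s}\, s\, p^{s+1} |V|}{(r-1) K \lambda^{s-1}}.
\]
For each $v \in V \setminus (W \cup W')$ and each realisation of $(Y_w)_{w \in W}$, I would apply Lemma~\ref{lemma:main} to $Y = Y_v$ (conditionally on $(Y_w)_{w \in W}$, using Proposition~\ref{prop:p-divergence-double-conditioning}) with $p' = p$ (justified by Claim~\ref{claim:FKG}) and events $E_B = \{Y_B = 1\}$ indexed by $(s-1)$-sets $B \subseteq W' \setminus \{v\}$ for which $B \cup \{v\}$ is contained in some edge of $\cH - W$. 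A direct computation gives $(\Pr_W(Y_v=1 \mid E_B) - \Ex_W[Y_v])^2 \Pr_W(E_B) = \Disc_W(B \cup \{v\}, v)^2/\Ex_W[Y_B]$; so using $\Ex_W[Y_B] \le p^{s-1}$ and taking expectations over both $(Y_w)_{w \in W}$ and $W'$, the first sum in Lemma~\ref{lemma:main} (itself summed over $v$) evaluates to at least
\[
  \frac{(1-\lambda)\, \lambda^{s-1}}{2p^s} \sum_{|B| = s,\, v \in B} \Ex\big[\Disc_W(B,v)^2\big] \;\ge\; \frac{(1-\lambda) \beta \binom{r}{s}\, s\, p\, |V|}{2(r-1)K},
\]
the powers of $\lambda$ cancelling exactly and producing a clean $\Omega(p|V|)$ bound.

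The main obstacle is to show that the correction term $\frac{p}{2}\sum_{i<j}\Pr_W(E_i \cap E_j)$ from Lemma~\ref{lemma:main} is negligible. For distinct valid pairs $B \neq B'$ of size $s-1$ with $v \notin B \cup B'$, the event $E_B \cap E_{B'} = \{Y_{B \cup B'} = 1\}$ has $\Pr_W$-probability at most $p^{|B \cup B'|}$ in expectation, while $\{B, B' \subseteq W'\}$ has probability $\lambda^{|B \cup B'|}$; since $|B \cup B'| \ge s$, each contribution carries an extra factor of at least $\lambda p$ compared to the main term. Summing over valid triples $(v, B, B')$ and invoking the degree hypothesis \eqref{eq:half} at sizes $s, s+1, \dotsc, 2s-2$ to bound the number of such triples, the expected correction is at most $O_{r,K}(\lambda)$ times the main term. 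Choosing $\lambda$ small enough in terms of $\alpha, \beta, r, K, p_0$ therefore makes the net expected gain $\Omega(p|V|)$; a standard averaging/Markov argument then produces a deterministic $W'$ with $|W'| \le 2\lambda|V|$ achieving this gain, closing the iteration once $V_0$ is chosen large enough to absorb rounding issues.
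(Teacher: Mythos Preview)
Your iterative energy-increment strategy, the choice of a random $W'$, and the gain computation via Lemma~\ref{lemma:main} are all essentially what the paper does. The gap is in the ``main obstacle'' paragraph: the claim that the correction term $\frac{p}{2}\sum_{i<j}\Pr_W(E_i\cap E_j)$ is $O_{r,K}(\lambda)$ times the main term is false in general.

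To see this, take $\cH$ to be the complete $r$-uniform hypergraph on $V$ (or, concretely, the $3$-uniform hypergraph of triangles in $K_n$) with $p$ a constant. Condition~\eqref{eq:half} holds with $K=r$, say. At level $s=2$ your events for a fixed $v$ are $E_{\{w\}}=\{Y_w=1\}$ for every $w\in W'\setminus\{v\}$, since every pair $\{v,w\}$ lies in some edge. The expected number of such events is $\approx\lambda|V|$, so after summing over $v$ and averaging over $W'$ the correction term is of order
\[
  p\cdot |V|\cdot \binom{\lambda|V|}{2}\cdot p^{2}\;\asymp\;\lambda^{2}p^{3}|V|^{3},
\]
whereas your gain is of order $p|V|$. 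The ratio is $\asymp \lambda^{2}p^{2}|V|^{2}$, which tends to infinity for any fixed $\lambda>0$. More generally, your bound on the correction reduces to requiring $(\lambda p)^{s-1}e(\cH)\ll|V|$, and nothing in the hypotheses forces this; the degree conditions~\eqref{eq:half} give the \emph{opposite} inequality $(\lambda p)^{s-1}e(\cH)\gtrsim|V|/K$ as soon as some $s$-set has positive degree. Your heuristic ``each contribution carries an extra factor $\lambda p$'' compares a single term of a \emph{double} sum to a single term of a \emph{single} sum; the double sum has far more terms.

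The missing idea, which the paper supplies, is an additional \emph{thinning}: once $B\setminus\{v\}\subseteq W'$, include $B$ among the events only with probability
\[
  \sigma_B \;\propto\; \frac{\deg_{\cH-W}(B)}{(\tau p)^{|B|}}\;\asymp\;\frac{p|V|}{e(\cH)}\cdot\frac{\deg_{\cH-W}(B)}{(\tau p)^{|B|}}.
\]
This simultaneously (i) reweights the gain so that it becomes exactly proportional to $\Err(W)$ (matching the $\deg_{\cH-W}(B)$ weights in the definition~\eqref{eq:Err-W-def}), and (ii) introduces a factor $\sigma_B\sigma_{B'}$ in every correction term, so that the loss picks up a $\big(p|V|/e(\cH)\big)^2$ and the dangerous $e(\cH)^2/|V|$ is cancelled down to $p|V|$. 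With this device in place (and working at all levels $|B|\ge 2$ rather than a single $s$, though that is a secondary point), your iteration goes through exactly as you outlined.
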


\begin{proof} Without loss of generality, we may assume that $\alpha < 1/2$, $\beta<1$, and $K>1$. We first define a few constants:
\begin{equation}
  \label{eq:constants}
  \gamma \coloneqq \frac{\beta^2}{300Kr},
  \qquad
  \tau \coloneqq \alpha\gamma (1-p_0),
  \qquad
  \lambda \coloneqq \frac{\tau}{2r},
  \qquad
  \text{and}
  \qquad
  V_0 \coloneqq 8r^2/\tau.
\end{equation}
A short calculation shows that the definition of $V_0$ guarantees that
\begin{equation}
  \label{eq:V_0-def}
  \frac{\tau \cdot V_0/2 - r}{V_0/2 - r} \ge \frac{\tau}{2^{1/r}}
  \qquad
  \text{and}
  \qquad
  \frac{V_0/2}{V_0/2-1} \le 2^{1/(2r)}.
\end{equation}

As explained above, we shall build our set $W$ in several rounds, starting with $W$ being the empty set.  In each round, we will use the following claim, which implements the dichotomy mentioned above.
\phantom\qedhere\end{proof}

\begin{claim}
  \label{clm:WWprime}
  Suppose that $W \subseteq V$ satisfies $\Err(W) > \beta \cdot e(\cH)$ and $|V\setminus W| \ge V_0/2$. Then there exists a set $W' \subseteq V \setminus W$ with at most $\tau|V|$ elements such that
  \begin{equation}
    \label{eq:IWW'-gain}
    H(W \cup W') \ge H(W) + \gamma p|V|.
  \end{equation}
\end{claim}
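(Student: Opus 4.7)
I would sample $W'$ probabilistically, including each $u\in V\setminus W$ independently with probability $\lambda$, and show that the expected increment $\Ex_{W'}[H(W\cup W')-H(W)]$ exceeds $\gamma p|V|$. Since $\Ex|W'|=\lambda|V\setminus W|\le \lambda|V|=\tau|V|/(2r)\le\tau|V|/2$, a standard concentration/Markov argument on $|W'|$ together with the expected-gain estimate yields, via the probabilistic method, a deterministic $W'$ satisfying both $|W'|\le\tau|V|$ and the required gain.

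To estimate the expected gain, first write
\[
H(W\cup W')-H(W)=\sum_{v\in V\setminus(W\cup W')}\!\!\bigl[I_p(Y_v\mid Y_W,Y_{W'})-I_p(Y_v\mid Y_W)\bigr]-\sum_{v\in W'}I_p(Y_v\mid Y_W).
\]
The subtracted \emph{loss} has expectation $\lambda H(W)\le\lambda|V|p/(1-p_0)=\alpha\gamma p|V|/(2r)$, which is at most $\gamma p|V|/2$ by the definition of $\lambda$ and the bound~\eqref{eq:HW-upper}.

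For each $v\in V\setminus(W\cup W')$, I would apply Lemma~\ref{lemma:main} to $Y_v^{Y_W}$ with $Z=Y_{W'}^{Y_W}$ and $p'=p$ (justified by Claim~\ref{claim:FKG}), using the events $E_B^v\coloneqq\{Y_u=1\text{ for all }u\in B\setminus\{v\}\}$ indexed by $B\ni v$ with $|B|\ge2$, $B\setminus\{v\}\subseteq W'$, and $B\subseteq A$ for some $A\in\cH$ containing $v$. Proposition~\ref{prop:p-divergence-double-conditioning} lifts the inequality through the $Y_W$-conditioning, and simple algebra identifies $(\Pr(Y_v=1\mid E_B^v,Y_W)-\Ex_W[Y_v])^2\Pr(E_B^v\mid Y_W)=\Disc_W(B,v)^2/\Ex_W[Y_{B\setminus\{v\}}]$. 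Using Claim~\ref{claim:FKG} again to bound $\Ex_W[Y_{B\setminus\{v\}}]\le p^{|B|-1}$ and $\Ex[Y_C]\le p^{|C|}$, and taking expectation over the random $W'$, gives an expected per-$v$ increment of at least
\[
\tfrac{1}{2p}\sum_{B}\lambda^{|B|-1}\Ex[\Disc_W(B,v)^2]/p^{|B|-1}\;-\;\tfrac{p}{2}\sum_{B\ne B'}(\lambda p)^{|B\cup B'|-1}.
\]

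The \emph{main term}, summed over $v$, is related to $\Err(W)$ term-by-term: the ratio (main-term coefficient of $\Ex[\Disc_W(B,v)^2]$) to ($\Err$-coefficient of the same) equals $\lambda^{|B|-1}\binom{r}{|B|}|B|p^{|B|}/(2\deg_\cH B)$, which by the hypothesis $\deg_\cH B\le K(\lambda p)^{|B|-1}e(\cH)/v(\cH)$ is at least $\binom{r}{|B|}|B|pv(\cH)/(2Ke(\cH))$. Minimizing over $|B|\in\{2,\ldots,r\}$ (attained at $|B|=r$ with value $rpv(\cH)/(2Ke(\cH))$) and using $\Err(W)>\beta e(\cH)$ yields a total main-term lower bound of $r\beta p|V|/(2K)$, which by our choice $\gamma=\beta^2/(300Kr)$ is at least $150\gamma p|V|$; this comfortably absorbs both the loss and any reasonable error.

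The \textbf{main obstacle} is bounding the Bonferroni-type error $\tfrac{p}{2}\sum_v\sum_{B\ne B'}(\lambda p)^{|B\cup B'|-1}$. Since $B\ne B'$ both contain $v$ we have $|B\cup B'|\ge 3$, but a blanket bound using only $(\lambda p)^2$ is too weak: one must organize the sum by $k\coloneqq|B\cup B'|$ and use the \emph{full} degree hierarchy $\Delta_s\le K(\lambda p)^{s-1}e(\cH)/v(\cH)$ for $s\in\{2,\ldots,r\}$ to bound, for each size-$(k-1)$ set $C=(B\cup B')\setminus\{v\}$, the number of pairs $(A,A')\in\cH^2$ with $B\subseteq A$, $B'\subseteq A'$ inducing the decomposition. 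The hypotheses produce geometric savings: the contribution at each level $k$ gains an extra factor of $(\lambda p)^{k-1}$ from the degree bound, compensating the combinatorial explosion. After this bookkeeping, one obtains an error bound of the form $C'_r K^c \lambda^2 p|V|$ (with $C'_r$ depending only on $r$, $c$ a small integer), which for $\lambda\le\tau/(2r)$ with $\tau$ as in~\eqref{eq:constants} is dominated by the main term. Assembling main $-$ loss $-$ error $\ge \gamma p|V|$ in expectation completes the proof.
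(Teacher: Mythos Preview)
Your overall architecture matches the paper's: sample $W'$ at random, bound the expected ``loss'' $\sum_{v\in W'}I_p(Y_v\mid Y_W)$ via~\eqref{eq:HW-upper}, and for each $v\notin W\cup W'$ apply Lemma~\ref{lemma:main} (conditionally on $Y_W$) with events indexed by sets $B\in\cH(v)$ satisfying $B\setminus\{v\}\subseteq W'$. The gap is in the Bonferroni error. You use \emph{all} such $B$ as events; the paper further thins them, keeping each eligible $B$ independently with probability $\sigma_B=\mu\cdot\deg_{\cH-W}B/\bigl(\binom{r}{|B|}|B|(\tau p)^{|B|}\bigr)$ (see~\eqref{eq:def sigma}). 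This thinning is essential, not a convenience.

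To see that your claimed bound $C_r'K^c\lambda^2p|V|$ is false without thinning, take $\cH$ to be the triangle hypergraph on $K_n$ (so $|V|=\binom{n}{2}$, $e(\cH)=\binom{n}{3}$) and let $p$ be a constant. For each vertex $v$ there are $2(n-2)$ size-$2$ sets $B\in\cH(v)$, hence $\Theta(n^2)$ ordered pairs $B\ne B'$ with $|B|=|B'|=2$ and $B\cap B'=\{v\}$; each contributes $(\lambda p)^{|B\cup B'|-1}=(\lambda p)^2$. Summing over $v$ and multiplying by $p/2$, your error term is $\Theta(\lambda^2p^3n^4)$, whereas you need $O(p|V|)=O(pn^2)$: you are off by a factor $\Theta((\lambda pn)^2)\to\infty$. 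The degree hierarchy cannot rescue this count: the hypothesis $\Delta_s\le K(\lambda p)^{s-1}e(\cH)/|V|$ bounds how many edges contain a fixed $s$-set, but the bad pairs above have $|B\cap B'|=1$, so only $\Delta_1$ is relevant, and $\Delta_1$ is exactly what produced the $\Theta(n)$ count per vertex in the first place. Your proposed reorganisation by $k=|B\cup B'|$ and pairs $(A,A')$ does not help here because the sum is over pairs $(B,B')$, not $(A,A')$, and for fixed $B\cup B'$ the constraint $B,B'\in\cH(v)$ is already accounted for.

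The paper's remedy is that the thinning makes the probability of any pair $(B,B')$ surviving pick up a factor $\sigma_B\sigma_{B'}=O(\mu^2)$, with $\mu\asymp p|V|/e(\cH)$ chosen in~\eqref{eq:mu-def}. The loss then becomes quadratic in $\mu$ while the gain is linear in $\mu$ (compare~\eqref{eq:sum-Gsv-final} and~\eqref{eq:39.5}), and $\mu$ is tuned so that both are of order $\gamma p|V|$. Without the $\sigma_B$, you are applying Lemma~\ref{lemma:main} with far too many events, and its lower bound becomes negative (hence vacuous).
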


\begin{proof}[Proof of Claim \ref{clm:WWprime}]
  Let $W'$ be a uniformly chosen subset of $V \setminus W$ with density $\tau$, that is, with exactly $\lfloor \tau \cdot |V \setminus W|\rfloor$ elements. 
  We will show that, under the assumption that $\Err(W) > \beta e(\cH)$ and $|V \setminus W| \ge V_0/2$, we have
  \begin{equation}
    \label{eq:H-expected-gain}
    \Ex\big[H(W \cup W')\big] \ge (1-\tau) \cdot H(W) + 2 \gamma p|V|.
  \end{equation}
  Consequently, since
  \begin{align}
    \label{eq:dontaskdonttell}
    \tau \cdot H(W) \stackrel{\eqref{eq:HW-upper}}{\le} \frac{\tau}{1-p_0} \cdot p|V| \stackrel{\eqref{eq:constants}}{=} \alpha\gamma p|V| < \gamma p |V|,
  \end{align}
  the desired inequality~\eqref{eq:IWW'-gain} must hold for some $W'$.

  We now write
  \begin{equation}
    \label{eq:29.5}
    \begin{split}
      H(W & \cup W')- H(W) =\sum_{v\in V\setminus(W\cup W')}I_p(Y_v\,|\,(Y_w)_{w\in W\cup W'}) -
      \sum_{v\in V\setminus W}I_p(Y_v\,|\,(Y_w)_{w\in W}) \\
      &=\underbrace{\sum_{v\in V\setminus(W\cup W')}I_p(Y_v\,|\,(Y_w)_{w\in W\cup W'}) - I_p(Y_v\,|\,(Y_w)_{w\in W})}_{\sumone}
      - \underbrace{\sum_{v\in W'}I_p(Y_v\,|\,(Y_w)_{w\in W})}_{\sumtwo}.
    \end{split}
  \end{equation}
  By linearity of expectation,
  \[
  \Ex\left[\sumtwo\right] = \frac{\lfloor \tau \cdot |V \setminus W|\rfloor}{|V \setminus W|} \cdot H(W) \le \tau \cdot H(W),
  \]
  and thus~\eqref{eq:H-expected-gain} will follow if we show that
  \begin{equation}
    \label{eq:sum-Jv-goal}
    \cJ \coloneqq \Ex\left[\sumone\right] \ge 2\gamma p|V|.
  \end{equation}
  
  In order to bound $\cJ$ from below, we will apply our main lemma (Lemma~\ref{lemma:main}), conditionally on $\big(Y_w:w \in W\big)$, with $Z = \big(Y_w:w \in W'\big)$ and a careful choice of the sequence of $Z$-measurable events that we shall now define. To this end, for each $v \in V \setminus W$, let
  \[
    \cH(v) \coloneqq \big\{B \subseteq V \setminus W : \text{$|B| \ge 2$, $v \in B$, and $B \subseteq A$ for some $A \in \cH - W$}\big\}
  \]
  and let $\cG(v)$ be the random subset of $\cH(v)$ formed by including each $B \in \cH(v)$ satisfying $B \setminus \{v\} \subseteq W'$ with probability $\sigma_B$, which we will specify later, independently for each such $B$.

  Let $S \coloneqq \big(Y_w\big)_{w \in W}$ and, for every $v\in V\setminus(W\cup W')$, let $Y_v^S$ denote $Y_v$ conditioned on $S$, that is, the random variable whose (random) distribution is the distribution of $Y_v$ conditioned on $S$. Define
  \[
    J^S(v) \coloneqq I_p\big(Y_v^S \mid \big(Y_w^S\big)_{w \in W'}\big) - I_p\big(Y_v^S\big),
  \]
  The next step is to apply Lemma \ref{lemma:main}. Recall that we need to supply the lemma with a sequence of events. The number of events in our application will also be random, but it will depend only on $W'$ and $\cG(v)$, so let us fix their choice for the time being. For each $B \in \cG(v)$, let $E_B^S$ be the event that $Y_{B \setminus \{v\}}^S = 1$; note that $E_B^S$ is $(Y_w^S)_{w \in W'}$-measurable, as $B \setminus \{v\} \subseteq W'$. Since $\Ex\big[Y_v^S \mid (Y_w^S)_{w \in W'}\big] = E_{W \cup W'}[Y_v] \le p$, by Claim~\ref{claim:FKG}, we may apply Lemma \ref{lemma:main} with $Y = Y_v^S$, $Z = (Y_w^S:w\in W')$, the events $E_B^S$, and $p' = p$ to get (recall the definition of $\Disc_W$ given at the start of \S~\ref{sec:argument})
  \[
    \begin{split}
      J^S(v) &\ge \frac{1}{2p}\sum_{B\in\cG(v)}\big(\Pr(Y_v^S = 1 \mid E_B^S)-\Ex[Y_v^S]\big)^2 \Pr(E_B^S)
      -\frac{p}{2}\sum_{\substack{B,B'\in\cG(v)\\B\ne B'}}\Pr(E_B^S\cap E_{B'}^S) \\
      & = \frac{1}{2p} \sum_{B \in \cG(v)} \frac{\Disc_W(B,v)^2}{\Ex_W[Y_{B\setminus\{v\}}]}
      - \frac{p}{2} \sum_{\substack{B, B' \in \cG(v) \\ B \neq B'}} \Ex_W[Y_{B \setminus \{v\}} \cdot Y_{B' \setminus \{v\}}].
    \end{split}
  \]
  Since every edge of $\cG(v)$ contains $v$ and is disjoint from $W$, Claim~\ref{claim:FKG} implies that $\Ex_W[Y_{B \setminus \{v\}}] \le p^{|B|-1}$ and $\Ex_W[Y_{B \setminus \{v\}} \cdot Y_{B' \setminus \{v\}}] \le p^{|B \cup B'|-1}$ for all $B, B' \in \cG(v)$. This observation allows us to simplify our lower bound for $J^S(v)$ to
  \begin{equation}
    \label{eq:JSv-final}
    2 \cdot J^S(v) \ge \sum_{B \in \cG(v)} \frac{\Disc_W(B,v)^2}{p^{|B|}} - \sum_{\substack{B, B' \in \cG(v) \\ B \neq B'}} p^{|B \cup B'|}\eqqcolon \gain(v)-\loss(v),
  \end{equation}
  i.e., $\gain(v)$ is the first sum and $\loss(v)$ is the second.

  We now return to the $\cJ$ from \eqref{eq:sum-Jv-goal}. It is the expectation (over $W'$) of the sum $\sumone$ defined in~\eqref{eq:29.5}, each of whose summands is the expectation (over $S$) of $J^S(v)$, see Proposition~\ref{prop:p-divergence-double-conditioning}. We wish to exchange the sum and expectation, but since the sum is over $v\not\in W'$ (recall \eqref{eq:29.5}) and this is an event, we need to condition on it. Hence we arrive at
  \begin{equation}
    \label{eq:J-gain}
    \begin{split}
      \cJ & \stackrel{\eqref{eq:sum-Jv-goal}}{=} \sum_{v \in V \setminus W} \Pr(v \notin W') \cdot \Ex\left[\Ex\big[J^S(v) \mid v \notin W'\big]\right]\\
      & \stackrel{\eqref{eq:JSv-final}}{\ge} \frac{1-\tau}{2} \cdot \Ex\left[\sum_{v \in V \setminus W} \Ex\big[\gain(v) - \loss(v) \mid v \notin W'\big]\right],
    \end{split}
  \end{equation}
  where $\Ex$ and $\Pr$ denote the expectation and the probability over the random choice of the set~$W'$ and the hypergraphs $\cG(v)$ and over $S$. In the remainder of the proof,  we shall estimate the right-hand side of~\eqref{eq:J-gain}.

  We start with the estimate of the $\gain$ terms. We define
  \[
    \gain'(v) \coloneqq \Ex\big[\gain(v) \mid v \notin W'\big] = \sum_{B \in \cH(v)} \Pr\big(B \in \cG(v) \mid v \notin W'\big) \cdot \frac{D_W(B, v)^2}{p^{|B|}}.
  \]
  For every $B \in \cH(v)$, we have (recall the assumption that $|V \setminus W| \ge V_0/2$)
  \begin{align*}
    \Pr\big(B \in \cG(v) \mid v \notin W'\big)
    & = \Pr\big(B \setminus \{v\} \subseteq W' \mid v \notin W'\big) \cdot \sigma_B \\
    & = \prod_{i=0}^{|B|-2} \frac{\lfloor\tau |V \setminus W|\rfloor - i}{|V \setminus W| - i - 1} \cdot \sigma_B \\
    & \ge \left(\frac{\tau |V \setminus W| - r}{|V \setminus W| - r}\right)^{|B|-1} \cdot \sigma_B
      \stackrel{\eqref{eq:V_0-def}}{\ge} \frac{\tau^{|B|-1}}{2} \cdot \sigma_B.
  \end{align*}
  We conclude that
  \begin{equation}
    \label{eq:Ex'-gain}
    \gain'(v) \ge \frac{1}{2\tau}\sum_{B \in \cH(v)} \frac{\tau^{|B|} \cdot \sigma_B  \cdot D_W(B, v)^2}{p^{|B|}}.
  \end{equation}
  Summing~\eqref{eq:Ex'-gain} over all $v \in V \setminus W$ yields (recall the definition of $\deg_{\cH-W}$ given in~\eqref{eq:deg-cH-B})
  \begin{equation}
    \label{eq:sum-Gv}
    \sum_{v \in V \setminus W} \gain'(v) \ge \frac{1}{2\tau} \sum_{A \in \cH-W} \sum_{\substack{B \subseteq A \\ |B| \ge 2}} \sum_{v \in B} \frac{d_A}{\deg_{\cH-W}B} \cdot \frac{\tau^{|B|} \cdot \sigma_B \cdot \Disc_W(B,v)^2}{p^{|B|}},
  \end{equation}
  cf.~the definition of $\Err(W)$ given in~\eqref{eq:Err-W-def}.
  This is a good moment to finally define the probabilities~$\sigma_B$. We let
  \begin{equation}
    \label{eq:def sigma}
    \sigma_B \coloneqq \mu \cdot \frac{\deg_{\cH-W}B}{\binom{r}{|B|}|B|(\tau p)^{|B|}},
  \end{equation}
  where
    \begin{equation}
      \label{eq:mu-def}
      \mu \coloneqq \frac{\beta \tau p |V|}{16K(r-1)e(\cH)}.
  \end{equation}
  Note that $\sigma_B \le 1$ as
  \[
  \deg_{\cH-W}B
  \le \Delta_{|B|}(\cH)
  \stackrel{\textrm{\eqref{eq:half}}}{\le}
  K \cdot (\lambda p)^{|B|-1} \cdot \frac{e(\cH)}{|V|}
  \stackrel{\eqref{eq:constants}}{\le} K \cdot (\tau p)^{|B|-1} \cdot \frac{e(\cH)}{|V|}
  \stackrel{\eqref{eq:mu-def}}{\le} \frac{(\tau p)^{|B|}}{\mu}.
  \]
  Substituting~\eqref{eq:def sigma} into~\eqref{eq:sum-Gv} yields precisely
  \begin{equation}
    \label{eq:sum-Gsv-final}
    \Ex\bigg[\sum_{v \in V \setminus W} G'(v)\bigg] \ge \frac{\mu}{2\tau}\sum_{A\in\cH-W}\sum_{\substack{B\subseteq A\\ |B|\ge 2}}\sum_{v\in B}\frac{d_A \cdot \Ex[D_W(B,v)^2]}{\binom{r}{|B|}|B|p^{2|B|}} \stackrel{\eqref{eq:Err-W-def}}{=} \frac{\mu}{2 \tau} \cdot \Err(W).
  \end{equation}
  This concludes our estimate of the $\gain$ terms.
  
  The estimate of the $\loss$ terms in~\eqref{eq:J-gain} is similar, but somewhat more involved.  We define
  \[
    \loss'(v) \coloneqq \Ex\big[\loss(v) \mid v \notin W'\big] = \sum_{\substack{B, B' \in \cH(v) \\ B \neq B'}} \Pr\big(B, B' \in \cG(v) \mid v \notin W'\big) \cdot p^{|B \cup B'|}.
  \]
  Thus, we need a second moment estimate for the sum of indicators of $B \in \cG(v)$ over all $B \in \cH(v)$. Note first that, for each $B\ne B'$,
  \[
    \begin{split}
      \Pr\big(B, B' \in \cG(v) \mid v \notin W'\big)
      & = \Pr\big((B \cup B') \setminus \{v\} \subseteq W' \mid v\notin W'\big) \cdot \sigma_B \sigma_{B'} \\
      & = \prod_{i=0}^{|B \cup B'|-2} \frac{\lfloor\tau |V \setminus W|\rfloor - i}{|V \setminus W| - i - 1} \cdot \sigma_B \sigma_{B'} \\
      & \le \left(\frac{\tau |V \setminus W|}{|V \setminus W| - 1}\right)^{|B \cup B'| - 1} \cdot \sigma_B \sigma_{B'}
      \stackrel{\eqref{eq:V_0-def}}{\le} 2 \tau^{|B \cup B'|-1} \cdot \sigma_B \sigma_{B'}.
    \end{split}
  \]
  Hence
  \begin{equation}
    \label{eq:Ex'-loss}
    \loss'(v) \le \frac{2}{\tau} \sum_{\substack{B, B' \in \cH(v) \\ B \neq B'}} (\tau p)^{|B\cup B'|} \cdot \sigma_B \sigma_{B'}.
  \end{equation}
  Summing~\eqref{eq:Ex'-loss} over all $v \in V \setminus W$ gives
  \[
    \begin{split}
      \sum_{v \in V \setminus W} \loss'(v) & \le \frac{2}{\tau} \sum_{A, A' \in \cH-W} \sum_{\substack{B \subseteq A, B' \subseteq A' \\ |B|,  |B'| \ge 2 \\ B \neq B'}} \sum_{v \in B \cap B'} \frac{d_A}{\deg_{\cH-W}B} \cdot \frac{d_{A'}}{ \deg_{\cH-W}B'} \cdot (\tau p)^{|B \cup B'|}\cdot \sigma_B \sigma_B' \\
      & \stackrel{\eqref{eq:def sigma}}{=} \frac{2\mu^2}{\tau^2p} \cdot \underbrace{\sum_{A, A' \in \cH-W} \sum_{\substack{B \subseteq A, B' \subseteq A' \\ |B|, |B'| \ge 2 \\ B \neq B'}}\frac{|B \cap B'| \cdot d_A d_{A'}}{\binom{r}{|B|}|B|\binom{r}{|B'|}|B'| (\tau p)^{|B \cap B'|-1}}}_{(*)},
    \end{split}
  \]
  where we used the identity $|B \cup B'| + |B \cap B'| = |B| + |B'|$.
  Rearranging gives
\[
  (*)=\sum_{A\in\cH-W}d_A\sum_{\substack{B\subseteq A\\ |B| \ge 2}} \frac{1}{\binom{r}{|B|}|B|}\sum_{s=1}^{r-1}
  \frac{s}{(\tau p)^{s-1}}
  \underbrace{\sum_{\substack{C\subseteq B\\|C|=s}} \sum_{\substack{A'\in\cH-W\\C\subseteq A'}} d_{A'} \sum_{\substack{B' \subseteq A' \\ |B|'\ge2 \\ B\cap B'=C}}\frac{1}{\binom{r}{|B'|}|B'|}}_{S_{B,s}}.
\]
Now, for every $A' \in \cH-W$, every $s \ge 1$, and every $C \subseteq A'$ with $|C| = s$,
\[
  \sum_{C \subseteq B' \subseteq A'} \frac{1}{\binom{r}{|B'|}|B'|} = \sum_{b' = s}^r \frac{\binom{r-s}{b'-s}}{\binom{r}{b'}b'} = \sum_{b'=s}^r \frac{\binom{b'}{s}}{\binom{r}{s}b'} =  \sum_{b'=s}^r \frac{\binom{b'-1}{s-1}}{\binom{r}{s}s} = \frac{1}{s} \le 1.
\]
Hence, for every $B$ with at most $r$ elements and every $s \ge 1$,
\[
  \begin{split}
    S_{B,s}
    & \le \sum_{\substack{C \subseteq B \\ |C| = s}} \sum_{\substack{A' \in \cH-W \\ C \subseteq A'}} d_{A'}
    \le \binom{|B|}{s} \cdot \Delta_s(\cH)
    = \frac{|B|}{s} \binom{|B|-1}{s-1} \cdot \Delta_s(\cH)\\
    & \stackrel{\eqref{eq:half}}{\le}
    \frac{|B|}{s} \binom{|B|-1}{s-1}\cdot (\lambda p)^{s-1} \cdot K \cdot \frac{e(\cH)}{|V|}
    \le \frac{|B|}{s} \cdot (r\lambda p)^{s-1} \cdot K \cdot \frac{e(\cH)}{|V|}.
  \end{split}
\]
Consequently,
\[
  \begin{split}
    (*) & \le \sum_{A \in \cH-W} d_A \sum_{\substack{B \subseteq A \\ |B| \ge 2}} \frac{1}{\binom{r}{|B|}} \sum_{s=1}^{r-1} \left(\frac{r\lambda}{\tau}\right)^{s-1} \cdot K \cdot \frac{e(\cH)}{|V|}\\
    & = e(\cH-W) \cdot (r-1) \cdot \sum_{s=1}^{r-1} \left(\frac{r\lambda}{\tau}\right)^{s-1} \cdot  K \cdot \frac{e(\cH)}{|V|}.
\end{split}
\]
Since $r \lambda = \tau / 2$, we conclude that
\begin{align}\label{eq:39.5}
  \sum_{v \in V \setminus W} L'(v) \le \frac{2\mu^2}{\tau^2p} \cdot (r-1) \cdot 2K \cdot \frac{e(\cH)^2}{|V|}
  \stackrel{\eqref{eq:mu-def}}{=} \frac{\mu}{\tau} \cdot \frac{\beta e(\cH)}{4}.
\end{align}
Combining this with the estimate~\eqref{eq:sum-Gsv-final} gives
\[
  \begin{split}
    \cJ & \stackrel{\eqref{eq:J-gain}}{\ge}
    \frac{1-\tau}{2} \cdot \Ex\left[ \sum_{v \in V \setminus W} \gain'(v) - \loss'(v) \right]
    \stackrel{\textrm{(\ref{eq:sum-Gsv-final},\ref{eq:39.5})}}{\ge}
    \frac{(1-\tau)}{2} \cdot \frac{\mu}{\tau} \cdot \left(\frac{\Err(W)}{2} -  \frac{\beta e(\cH)}{4}\right) \\
    & \stackrel{(*)}{>}
    \frac{(1-\tau)}{2} \cdot \frac{\mu}{\tau} \cdot \frac{\beta e(\cH)}{4}
    \stackrel{~\eqref{eq:mu-def}}{=}
    \frac{(1-\tau)\beta^2}{128K(r-1)} \cdot p|V|
    \stackrel{\eqref{eq:constants}}{\ge} 2\gamma p|V|,
  \end{split}
\]
where $(*)$ follows from our assumption that $\Err(W) > \beta e(\cH)$. The claim is thus proved. 
\end{proof}

\begin{proof}[Proof of Lemma \ref{claim:main}, continued]
  Suppose that the assertion of the lemma is not true, that is, $\Err(W) > \beta \cdot e(\cH)$ for every $W \subseteq V$ with at most $\alpha|V|$ elements. We will construct a sequence $W_0, \dotsc, W_j$ of subsets of~$V$, where $j = \lfloor \alpha / \tau \rfloor + 1$, such that, for each $i \in \{0, \dotsc, j\}$,
  \begin{enumerate}[label=(\roman*)]
  \item
    \label{item:Wi-size}
    $|W_i| \le i \cdot \tau |V|$ and
  \item
    \label{item:HWi-lower}
    $H(W_i) \ge i \cdot \gamma p|V|$.
  \end{enumerate}
  If such a sequence existed, we would have
  \[
  H(W_j) \ge j \cdot \gamma p|V|
  > (\alpha / \tau) \cdot \gamma p|V|
  \stackrel{\eqref{eq:constants}}{=} |V| \cdot \frac{p}{1-p_0},
  \]
  which contradicts~\eqref{eq:HW-upper}.
  
  We start by letting $W_0 = \emptyset$. Suppose that $0 \le i \le j-1$ and that $W_i$ has already been defined so that~\ref{item:Wi-size} and~\ref{item:HWi-lower} hold. Since
  \[
  |W_i| \le i \cdot \tau |V|
  \le \lfloor \alpha/\tau \rfloor \cdot \tau |V|
  \le \alpha |V|,
  \]
  we have $\Err(W_i) > \beta \cdot e(\cH)$ by the contradictory assumption. We note also that $|V \setminus W_i| \ge (1-\alpha)|V| \ge |V|/2 \ge V_0/2$. In particular, Claim~\ref{clm:WWprime}, invoked with $W = W_i$, supplies a $W' \subseteq V \setminus W_i$ with at most $\tau |V|$ elements that satisfies~\eqref{eq:IWW'-gain}. We let $W_{i+1}=W_i\cup W'$ and note that
  \[
    |W_{i+1}| = |W_i| + |W'| \stackrel{\textrm{\ref{item:Wi-size}}}{\le} i \cdot \tau |V| + \tau |V| = (i+1) \cdot \tau |V|
  \]
  and
  \[
  H(W_{i+1}) = H(W_i \cup W')
  \stackrel{\eqref{eq:IWW'-gain}}{\ge}
  H(W_i) + \gamma p |V|
  \stackrel{\textrm{\ref{item:HWi-lower}}}{\ge} (i+1) \cdot \gamma p |V|,
  \]
  so~\ref{item:Wi-size} and~\ref{item:HWi-lower} continue to hold with $i$ replaced by $i+1$. This completes the proof of the existence of the sequence of $W_0, \dotsc, W_j$, which yields the desired contradiction.
\end{proof}

\begin{proof}[Proof of Theorem \ref{thm:main-hypergraphs}]
  Let $\lambda$ and $V_0$ be constants supplied by Lemma~\ref{claim:main} invoked with
  \[
    \alpha \coloneqq \frac{\eps}{2K}
    \qquad
    \text{and}
    \qquad
    \beta \coloneqq \frac{\eps^4}{4r}.
  \]
  (We note here that $\lambda \ge 2^{-15}K^{-2}r^{-4}\eps^9(1-p_0)$ and $V_0 = 4r/\lambda$.)
  We first handle the uninteresting case $|V| < V_0$. Considering, in the definition of $\Phi(\eta)$, the function $q \colon V \to [0,1]$ that assigns zero to all elements of $V$ shows that
\[
  \Phi(\eta+\eps) \le \Phi(0) \le |V| \cdot i_p(0) = - |V| \cdot \log(1-p) \le -V_0 \cdot \log(1-p_0).
\]
In particular, setting $C \coloneqq -V_0 \cdot \log(1-p_0)$ makes the assertion of the theorem hold vacuously. 

\phantomsection\label{pg:endofproof}

We may thus assume that $|V| \ge V_0$, so that Lemma~\ref{claim:main} supplies a set $W \subseteq V$ with at most $\eps/(2K) \cdot |V|$ elements such that $\Err(W)\le \eps^4/(4r) \cdot e(\cH)$. Let $q^W \colon V \to [0,1]$ be the random function defined in the proof outline, that is, $q_v^W \coloneqq \Ex_W[Y_v]$ for $v \in V \setminus W$ and $q_v^W \coloneqq p$ for $v \in W$. We have
\[
  \Ex\left[\sum_{A \in \cH-W} d_A \cdot \Big| \Ex_W[Y_A] - \prod_{a \in A} q_a^W \Big|\right]  \stackrel{\eqref{eq:key-inequality}}{\le} p^r\big(re(\cH)\big)^{1/2} \cdot \Err(W)^{1/2} \le \frac{\eps^2}{2} \cdot p^r e(\cH).
\]
In particular, it follows from Markov's inequality that, with probability at least $1-\eps$,
\[
  \sum_{A \in \cH-W} d_A \prod_{a \in A} q_a^W \le \sum_{A \in \cH-W} d_A \cdot \Ex_W[Y_A] + \frac{\eps}{2} \cdot p^r e(\cH).
\]
However, the definition of $Y$ implies that, deterministically, 
\[
  \sum_{A \in \cH-W} d_A Y_A \le \sum_{A \in \cH} d_A Y_A = e(\cH[R]) \le \eta p^re(\cH)
\]
and thus, with probability at least $1-\eps$,
\[
  \sum_{A \in \cH-W} d_A \prod_{a \in A} q_a^W \le (\eta+\eps/2) \cdot p^re(\cH).
\]
The definition of $q^W$ and Claim~\ref{claim:FKG} guarantee that $q_v^W \le p$ for every $v \in V$ and, therefore,
\[
  \begin{split}
    \sum_{A \in \cH \setminus (\cH-W)} d_A \prod_{a \in A}q_a^W &
    \le p^r \cdot \big(e(\cH) - e(\cH-W)\big)
    \le p^r \cdot |W| \cdot \Delta_1(\cH) \\
    & \stackrel{\eqref{eq:half}}{\le}
    p^r \cdot \frac{\eps |V|}{2K} \cdot K \cdot \frac{e(\cH)}{v(\cH)}
    = \frac{\eps}{2} \cdot p^r e(\cH).
\end{split}
\]
Summarising, with probability at least $1-\eps$, we have
\[
  f(q^W) = \sum_{A \in \cH} d_A \prod_{a \in A} q_a^W \le (\eta+\eps) \cdot p^r e(\cH).
\]
Hence, we may conclude that
\[
  H(W) \stackrel{\textrm{\eqref{eq:W-goal}}}{\ge} \Pr\left(f(q^W) \le (\eta+\eps) p^re(\cH)\right) \cdot \Phi(\eta+\eps) \ge (1-\eps) \Phi(\eta+\eps),
\]
as needed.
\end{proof}

\section{Lower bounds for the lower tail}
\label{sec:lower}

In this section, we prove Theorem~\ref{thm:lower-bound}. We will need the following technical lemma.

\begin{lemma}
  \label{lem:calc Var}
  For every $p_0<1$, there exists a constant $K$ such that the following holds. Suppose that $0 < p \le p_0$ and $0 \le q \le p$, let $Y \sim \Ber(q)$, and let
  \[
    X \coloneqq Y \log\frac{q}{p} + (1-Y) \log \frac{1-q}{1-p}.
  \]
  Then,
  \[
    \Var(X) \le K \Ex[X] = K i_p(q).
  \]
\end{lemma}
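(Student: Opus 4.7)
The plan is to reduce the lemma to an elementary one-variable calculus estimate. Setting $a := \log(q/p)$ and $b := \log((1-q)/(1-p))$ (with the convention $a \cdot 0 = 0$ in the degenerate case $q=0$), the random variable $X$ takes value $a$ with probability $q$ and value $b$ with probability $1-q$. Thus $\Ex[X] = qa + (1-q)b = i_p(q)$, and the standard formula for the variance of a two-point random variable gives $\Var(X) = q(1-q)(a-b)^2$. Note that $a \le 0 \le b$ since $q \le p$. Our task is therefore to prove
\[
  q(1-q)(a-b)^2 \le K \cdot i_p(q)
\]
for a suitable $K = K(p_0)$, and we may clearly assume $q \in (0,p]$.

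The key preliminary estimate is
\[
  i_p(q) \ge \frac{(p-q)^2}{2p}.
\]
I would prove this via the decomposition $i_p(q) = p \phi(q/p) + (1-p)\phi((1-q)/(1-p))$, where $\phi(t) := t\log t - t + 1 \ge 0$; this decomposition uses $p \cdot (q/p) + (1-p) \cdot ((1-q)/(1-p)) = 1$. For $u \in (0,1]$, the inequality $\phi(u) \ge (u-1)^2/2$ follows by checking that $g(u) := \phi(u) - (u-1)^2/2$ satisfies $g(1) = 0$ and $g'(u) = \log u - (u-1) \le 0$, so $g$ is decreasing on $(0,1]$. Applied to $u = q/p \in (0,1]$, this yields $i_p(q) \ge p(u-1)^2/2 = (p-q)^2/(2p)$.

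With these preparations I would split into two cases. If $q \ge p/2$, set $\epsilon := 1 - q/p \in [0,1/2]$; the elementary estimates $|a| = -\log(1-\epsilon) \le 2\epsilon$ and $b = \log(1 + p\epsilon/(1-p)) \le p\epsilon/(1-p) \le p_0 \epsilon/(1-p_0)$ give $|a - b| = |a| + b \le M\epsilon$ with $M := 2 + p_0/(1-p_0)$, so, using $q(1-q) \le p$,
\[
  \Var(X) \le p M^2 \epsilon^2 = M^2 (p-q)^2/p \le 2 M^2 \cdot i_p(q).
\]
If instead $q < p/2$, then $(p-q)^2 \ge p^2/4$, so the key estimate gives $i_p(q) \ge p/8$; on the other hand, the crude bound $\Var(X) \le 2q(a^2+b^2)$ combines with $qa^2 = p \cdot (q/p)\log^2(q/p) \le 4p/e^2$ (using that $u \log^2 u \le 4/e^2$ on $(0,1]$, with maximum at $u = e^{-2}$) and $qb^2 \le (p/2)\log^2(1/(1-p_0))$ to give $\Var(X) \le p \cdot \big(8/e^2 + \log^2(1/(1-p_0))\big)$, so the ratio is bounded by a constant depending only on $p_0$.

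The proof is essentially routine calculus. The only delicate point is bookkeeping to ensure the constant $K$ depends on $p_0$ alone and not on $p$, which motivates the case split: near $q = p$ both sides vanish quadratically with ratio tending to $2$ (and this forces any direct Pinsker-type inequality $\Var(X) \le C \cdot i_p(q)$ with $C$ independent of $p_0$ to fail as $p_0 \uparrow 1$, whence the factor $1/(1-p_0)$ in $M$), while away from $q = p$ the crude bound above suffices.
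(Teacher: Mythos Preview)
Your proof is correct and follows essentially the same route as the paper: both express $\Var(X)=q(1-q)(a-b)^2$, establish the lower bound $i_p(q)\ge (p-q)^2/(2p)$, and split into the cases $q\ge p/2$ and $q<p/2$, with the second case handled identically. The only differences are cosmetic: where the paper obtains $i_p(q)\ge (p-q)^2/(2p)$ and the bound on $|a-b|$ in the first case via Taylor expansion with Lagrange remainder, you use the decomposition $i_p(q)=p\phi(q/p)+(1-p)\phi((1-q)/(1-p))$ together with $\phi(u)\ge(u-1)^2/2$ and the elementary bounds $-\log(1-\epsilon)\le 2\epsilon$, $\log(1+x)\le x$, arriving at the same constants up to minor numerical variation.
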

\begin{proof}
  This is nothing but a calculus exercise, but let us do it in details anyway. Observe first that the case $q=0$ is trivial. Indeed, $i_p$ is nonnegative and $\Var(X) = 0$ when $q=0$. We will thus assume that $q > 0$. A direct computation shows that
  \[
    \Var(X) = q(1-q) \left( \log\frac{q}{p} - \log\frac{1-q}{1-p}\right)^2 \stackrel{\eqref{eq:ip-derivatives}}{=} q(1-q) \big(i_p'(q)\big)^2 \le q \cdot \big(i_p'(q)\big)^2.
    \]
  Since $i_p(p) = i_p'(p) = 0$, expanding both $i_p(q)$ and $i_p'(q)$ in Taylor series around $q = p$ with Lagrange remainder gives $q_1, q_2 \in (q,p)$ such that
  \begin{align}
    i_p(q) & = \frac{(q-p)^2}{2} \cdot \left(\frac{1}{q_1} + \frac{1}{1-q_1}\right) \ge \frac{(q-p)^2}{2p}, \label{eq:46.5}\\
    i_p'(q) & = (q-p) \cdot \left(\frac{1}{q_2} + \frac{1}{1-q_2}\right).\nonumber
  \end{align}
  Suppose first that $q \ge p/2$. Our assumption that $p \le p_0$ implies that
  \[
  \frac{1}{q_2} + \frac{1}{1-q_2} \le \frac{1}{q} + \frac{1}{1-p}
  \le \frac{2}{p} + \frac{1}{1-p_0}
  \le \left(2 + \frac{p_0}{1-p_0} \right) \cdot \frac{1}{p} = \frac{2-p_0}{1-p_0} \cdot \frac{1}{p}
  \]
  and, consequently,
  \[
  \Var(X) \le q \cdot \big(i_p'(q)\big)^2
  \le \left(\frac{2-p_0}{1-p_0}\right)^2\cdot \frac{q \cdot (q-p)^2}{p^2}
  \le \left(\frac{2-p_0}{1-p_0}\right)^2 \cdot 2i_p(q).
  \]
  If, on the other hand, $q < p/2$, then, using the inequality $(a-b)^2 \le 2a^2 + 2b^2$, we get
  \[
    \begin{split}
      \frac{\Var(X)}{p}
      & \le \frac{q}{p} \left( \log\frac{q}{p} - \log\frac{1-q}{1-p}\right)^2
      \le \frac{2q}{p}\left(\log\frac{q}{p}\right)^2 + \frac{2q}{p}\left(\log\frac{1-q}{1-p}\right)^2 \\
      & \le \sup_{x\in (0,1/2)} 2x(\log x)^2 + \left(\log\frac{1}{1-p}\right)^2
      \le \frac{8}{e^2} + \left(\log\frac{1}{1-p_0}\right)^2,
    \end{split}
  \]
  whereas, since $i_p$ is decreasing in the interval $[0,p]$,
  \[
    \frac{i_p(q)}{p} \ge \frac{i_p(p/2)}{p} \ge \frac{(p/2)^2}{2p^2} = \frac{1}{8}.\qedhere
  \]
\end{proof}

\begin{proof}[{Proof of Theorem~\ref{thm:lower-bound}}]
  We may assume without loss of generality that $\eps < 1$. Let $q \colon V \to [0,1]$ be the minimiser in the definition of $\Phi\big((1-\eps)\eta\big)$ and let $Y' = (Y_v')_{v \in V}$ be a sequence of independent Bernoulli random variables with $\Ex[Y_v']=q_v$ for each $v \in V$, so that
  \[
    \Ex[f(Y')] \le (1-\eps)\eta\Ex[f(Y)]
    \qquad \text{and} \qquad
    \sum_{v \in V} i_p(q_v) = \Phi\big((1-\eps)\eta\big).
  \]
  We claim that $q_v \le p$ for every $v \in V$. Indeed, otherwise $i_p(q_v) > 0 = i_p(p)$ and changing $q_v$ to $p$ can only decrease $\Ex[f(Y')]$.
  Let $\cY \subseteq \{0,1\}^V$ be arbitrary and note that
  \[
    \begin{split}
      \Pr\big(Y' \in \cY\big) & = \sum_{y \in \cY} \frac{\Pr(Y' = y)}{\Pr(Y=y)} \cdot \Pr(Y=y) = \sum_{y \in \cY} \prod_{v : y_v =1} \frac{q_v}{p} \prod_{v : y_v = 0} \frac{1-q_v}{1-p} \cdot \Pr(Y = y) \\
      & \le \max\left\{\exp\left(\sum_{v:y_v=1} \log\frac{q_v}{p} + \sum_{v:y_v=0}\log\frac{1-q_v}{1-p}\right) \colon y \in \cY \right\} \cdot \Pr(Y \in \cY).
    \end{split}
  \]
  In view of this, define, for each $y \in \{0,1\}^V$,
  \[
    J(y) \coloneqq \sum_{v:y_v=1} \log\frac{q_v}{p} + \sum_{v:y_v=0} \log\frac{1-q_v}{1-p},
  \]
  so that the above inequality may be rewritten as
  \begin{equation}
    \label{eq:RJ}
    \Pr(Y' \in \cY) \le \max_{y\in\cY}\exp\big(J(y)\big)\cdot\Pr(Y \in \cY).
  \end{equation}

  Now, let $K$ be the constant given by Lemma~\ref{lem:calc Var}, let $C' \coloneqq K/(2\eps^2)$, and define
  \begin{align}
    \nonumber
    \cY_1 & \coloneqq \left\{y \in \{0,1\}^V : f(y) \le \eta \Ex[f(Y)]\right\}, \\
    \label{eq:def cA2}
    \cY_2 & \coloneqq \left\{y \in \{0,1\}^V : J(y) \le (1+\eps) \Phi\big((1-\eps)\eta\big) + C'\right\},
  \end{align}
  It is immediate from these definitions that
  \[
    \begin{split}
      \Pr\big(X \le \eta\Ex[X]\big) & = \Pr(Y \in \cY_1) \ge \Pr(Y \in \cY_1 \cap \cY_2) \stackrel{\textrm{\eqref{eq:RJ}}}{\ge}
      \Pr(Y' \in \cY_1 \cap \cY_2) \cdot \exp\left(-\max_{y \in \cY_2} J(y)\right)\\
      & \stackrel{\textrm{\eqref{eq:def cA2}}}{\ge} \Pr(Y' \in \cY_1 \cap \cY_2)\cdot \exp\left(-(1+\eps)\Phi\big((1-\eps)\eta\big)-C'\right).
    \end{split}
  \]
  We will show that $\Pr(Y' \in \cY_1 \cap \cY_2) \ge \eps/2$, which will yield the assertion of the theorem with $C \coloneqq C' + \log(2/\eps)$.

  Since $f$ is nonnegative, Markov's inequality gives
  \[
    \Pr\big(f(Y') > \eta\Ex[f(Y)]\big) \le 1-\eps
  \]
  and thus
  \[
    \Pr(Y' \in \cY_1) = \Pr\big(f(Y') \le \eta\Ex[f(Y)]\big) \ge \eps;
  \]
  in particular, it is enough to show that $\Pr(Y' \notin \cY_2) \le \eps/2$. To this end, examine $J(Y')$.
  It is a sum of independent variables $(X_v)_{v\in V}$, where each $X_v$ is distributed exactly like the
  $X$ of Lemma~\ref{lem:calc Var}, only with $q$ replaced by $q_v$. In particular,
  \[
    \Ex[J(Y')] = \sum_{v \in V} \Ex[X_v] = \sum_{v \in V} i_p(q_v) = \Phi\big((1-\eps)\eta\big)
  \]
  and
  \[
    \Var(J(Y')) = \sum_{v \in V} \Var(X_v) \le K \sum_{v \in V} \Ex[X_v] = K \Ex[J(Y')].
  \]
  Therefore, writing $\mu \coloneqq \Ex[J(Y')]$, Chebyshev's inequality gives
  \[
    \begin{split}
      \Pr(Y' \notin \cY_2 ) & = \Pr\big(J(Y') > (1+\eps)\mu + C'\big) \le \frac{\Var(J(Y'))}{(\eps \mu + C')^2} \le \frac{K\mu}{(\eps\mu + C')^2} \\
      & \le \max_{x \ge 0} \frac{Kx}{(\eps x + C')^2} = \max_{y > 0} \frac{K}{(\eps y + C'/y)^2} = \frac{K}{4C'\eps} = \frac{\eps}{2},
    \end{split}
  \]
  as desired.
\end{proof}

\section{Applications}
\label{sec:applications}

In this section, we derive Theorems~\ref{thm:graphs}, \ref{thm:hypergraphs}, and \ref{thm:APs} from our main technical result, Theorem~\ref{thm:main-hypergraphs}, and the general lower bound estimate for lower tail probabilities, Theorem~\ref{thm:lower-bound}.  In order to do so, we just need to represent the number of copies of a given (hyper)graph $H$ in subgraphs of the complete (hyper)graph (resp.\ the number of arithmetic progressions of a given length in subsets of positive integers) as the number of edges in some auxiliary hypergraph $\cH$ and verify that $\cH$ satisfies the assumptions of Theorem~\ref{thm:main-hypergraphs} when $p \gg n^{-1/m_r(H)}$.  This is pretty straightforward, but we present the full details for the reader's convenience.

The following easy lemma, which states that $\Phi_p^{\cH}$, defined in~\eqref{eq:3.5} above the statement of Theorem~\ref{thm:main-hypergraphs}, satisfies $\Phi_p^{\cH}(\eta) = \Theta\big(v(\cH)p\big)$ for every uniform hypergraph $\cH$ whose maximum degree is comparable to its average degree, will be used to absorb the additive constant~$C$ from the assertions of Theorems~\ref{thm:main-hypergraphs} and~\ref{thm:lower-bound} into the main term.

\begin{lemma}
  \label{lemma:Phi-p-cH-lower}
  Suppose that $\cH$ is an $r$-uniform hypergraph that satisfies
  \[
    \Delta_1(\cH) \le K \cdot \frac{e(\cH)}{v(\cH)}
  \]
  for some $K$. Then, for all positive reals $p$ and $\eps$,
  \[
    \Phi_p^{\cH}(1-\eps) \ge \frac{\eps^2}{2K^2} \cdot |V|p.
  \]
\end{lemma}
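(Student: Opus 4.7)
Fix a minimiser $q \in [0,1]^V$ in the definition of $\Phi_p^{\cH}(1-\eps)$, so that $\sum_{A \in \cH} d_A \prod_{v \in A} q_v \le (1-\eps) p^r e(\cH)$ and our goal is to bound $\sum_{v \in V} i_p(q_v)$ from below. The first observation is that we may assume $q_v \le p$ for every $v$: if some $q_v > p$, then replacing it by $p$ only decreases $\prod_{v \in A} q_v$ for every $A \ni v$ (so the constraint is preserved) and strictly decreases $i_p(q_v)$ (which is minimised at $p$), contradicting minimality.

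With this reduction in hand, set $t_v \coloneqq p - q_v \ge 0$. The key estimate is the Weierstrass product inequality $\prod_{v \in A}(1 - t_v/p) \ge 1 - \sum_{v \in A} t_v/p$, which, after multiplying by $p^r$, yields
\[
  p^r - \prod_{v \in A} q_v \le p^{r-1} \sum_{v \in A} t_v.
\]
Summing this over $A \in \cH$ with weights $d_A$ and swapping the order of summation gives
\[
  \eps p^r e(\cH) \le p^r e(\cH) - \sum_{A \in \cH} d_A \prod_{v \in A} q_v \le p^{r-1} \sum_{v \in V} t_v \deg_\cH(\{v\}) \le p^{r-1} \Delta_1(\cH) \sum_{v \in V} t_v,
\]
where the first inequality uses the defining constraint of $q$. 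Invoking the hypothesis $\Delta_1(\cH) \le K e(\cH)/|V|$ and rearranging, I get
\[
  \sum_{v \in V} t_v \ge \frac{\eps p |V|}{K}.
\]

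The final step is to convert this $\ell^1$-bound on $t = (t_v)$ into the desired lower bound on $\sum_v i_p(q_v)$. Expanding $i_p$ in a Taylor series around $p$ (as in~\eqref{eq:46.5}) and using $q_v \le p$ yields $i_p(q_v) \ge t_v^2/(2p)$. Combining this with the Cauchy--Schwarz inequality $\sum_v t_v^2 \ge (\sum_v t_v)^2/|V|$ gives
\[
  \sum_{v \in V} i_p(q_v) \ge \frac{1}{2p|V|} \Big(\sum_{v \in V} t_v\Big)^2 \ge \frac{1}{2p|V|} \cdot \frac{\eps^2 p^2 |V|^2}{K^2} = \frac{\eps^2}{2K^2} \cdot |V| p,
\]
as required. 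There is no real obstacle here --- the argument is short and the only minor subtlety is the initial reduction to the regime $q_v \le p$, which is needed both for the Weierstrass step (to keep the factors nonnegative) and for the quadratic lower bound on $i_p$.
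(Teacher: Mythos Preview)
Your proof is correct and follows essentially the same route as the paper's: reduce to $q_v \le p$, use the inequality $p^r - \prod_v q_v \le p^{r-1}\sum_v (p-q_v)$ to obtain the $\ell^1$-bound $\sum_v (p-q_v) \ge \eps p|V|/K$, and convert this via the quadratic lower bound $i_p(q_v)\ge (p-q_v)^2/(2p)$. The only cosmetic difference is that the paper applies Jensen's inequality (convexity of $i_p$) before the quadratic bound, whereas you apply the quadratic bound pointwise and then Cauchy--Schwarz; both yield the identical inequality $\sum_v i_p(q_v)\ge (\sum_v t_v)^2/(2p|V|)$.
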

\begin{proof}
  Let $q \colon V \to [0,1]$ be a function achieving the minimum in the definition of $\Phi_p^{\cH}$ and note that $q_v \le p$ for every $v \in V$.  Indeed, otherwise $i_p(q_v) > 0 = i_p(p)$ and changing $q_v$ to $p$ can only decrease $\Ex[e(\cH[R^{(q)}])]$.  As $q_v \le p$ for every $v \in V$, it is easy to conclude that 
  \begin{equation}
    \label{eq:easy}
    p^{|A|} - \prod_{v \in A} q_v \le \sum_{v \in A} (p-q_v)p^{|A|-1}
  \end{equation}
  for every $A \subseteq V$. We may thus conclude that
  \[
    \begin{split}
      \eps p^r e(\cH) &
      \stackrel{(\star)}{\le} p^r e(\cH) - \Ex[e(\cH[R^{(q)}])]
      = \sum_{A\in\cH}d_A \cdot \left(p^{|A|}-\prod_{v\in A}q_v\right) \\
      &  \stackrel{\eqref{eq:easy}}{\le}
      \sum_{A \in \cH} d_A \cdot \sum_{v \in A} (p-q_v)p^{r-1}
      = \sum_{v \in V} (p-q_v)p^{r-1} \cdot \deg_{\cH}v \\
      & \le \Delta_1(\cH) \cdot \sum_{v \in V} (p-q_v) p^{r-1}
      =p^{r-1}\Delta_1(\cH) \cdot \left( p|V|-\sum_{v\in V}q_v\right), 
    \end{split}
    \]
    where $(\star)$ follows because $q$ is the minimiser of $\Phi(1-\eps)$. Consequently,
  \begin{equation}\label{eq:48.5}
  \bar{q} \coloneqq \frac{1}{|V|} \sum_{v \in V} q_v
  \le p \cdot \left(1 - \frac{\eps e(\cH)}{|V| \cdot \Delta_1(\cH)}\right)
  \le p \cdot \left(1 - \frac{\eps}{K}\right).
  \end{equation}
  Since the function $i_p$ is convex and $i_p(q) \ge \frac{(q-p)^2}{2p}$ when $q \le p$, see~\eqref{eq:46.5}, we may conclude that
  \[
  \Phi_p^{\cH}(1-\eps)
  = \sum_{v \in V} i_p(q_v)
  \ge |V| \cdot i_p(\bar{q})
  \ge |V| \cdot \frac{(\bar{q}-p)^2}{2p}
  \stackrel{\eqref{eq:48.5}}{\ge} \frac{\eps^2}{2K^2} \cdot |V|p,
  \]
  as claimed.
\end{proof}

\begin{proof}[Proof of Theorems~\ref{thm:graphs} and~\ref{thm:hypergraphs}]
  Theorem~\ref{thm:graphs} is merely the special case $s=2$ in Theorem~\ref{thm:hypergraphs}, so we focus on Theorem~\ref{thm:hypergraphs}.  Suppose that $H$ is a nonempty $s$-uniform hypergraph and let $\cH$ be the $e_H$-uniform hypergraph with vertex set $V \coloneqq \binom{\br{n}}{s}$ whose hyperedges are the edge sets of all $\frac{v_H!}{|\Aut(H)|} \cdot \binom{n}{v_H}$ copies of $H$ in the complete $s$-uniform hypergraph on $\br{n}$ (we take $d_A=1$ for all $A$). By symmetry,
  \[
    \Delta_1(\cH) = \frac{e_H \cdot e(\cH)}{v(\cH)}.
  \]
  Suppose now that $B \subseteq V$ has at least two elements and nonzero degree in $\cH$. Then $B$ must be the edge set of some copy of a subhypergraph $F \subseteq H$, with $e_F = |B| \ge 2$, in the complete $s$-uniform hypergraph on $\br{n}$. Since $m_s(H) \ge \frac{e_F-1}{v_F-s}$ (recall the definition of $m_s$ given in~\eqref{eq:msH}), we have
  \[
    \deg_{\cH}B \le n^{v_H-v_F} = n^{-(v_F-s)} \cdot n^{v_H-s} \le n^{-\frac{e_F-1}{m_s(H)}} \cdot  n^{v_H-s} = \left(n^{\frac{-1}{m_s(H)}}\right)^{|B|-1} \cdot n^{v_H-s}.
  \]
  Since $B$ was arbitrary, we may conclude that
  \begin{equation}
    \label{eq:Delta-s-H-Kn}
    \Delta_u(\cH) \le \left(n^{-\frac{1}{m_s(H)}}\right)^{u-1} \cdot n^{v_H-s}
  \end{equation}
  for every $u \ge 2$.

  Let $\lambda$ be the constant given by Theorem~\ref{thm:main-hypergraphs} invoked with $K = e_H$ and $\eps_{\textrm{Thm~\ref{thm:main-hypergraphs}}}=\eps/2$ and let $C$ be the larger of the constants given by Theorems~\ref{thm:main-hypergraphs} and~\ref{thm:lower-bound}, also with $\eps_{\textrm{Thm~\ref{thm:lower-bound}}}=\eps/2$. Lastly, let $L = L(\eps, \lambda, C, H)$ be a sufficiently large constant and suppose that $L n^{-1/m_s(H)} \le p \le p_0$.

  By choosing $L$ large, we guarantee that $n$ is large as well and, consequently,
  \[
    \frac{e(\cH)}{v(\cH)} \ge \frac{\binom{n}{v_H}}{\binom{n}{s}} \ge \frac{n^{v_H-s}}{2v_H!}.
  \]
  Together with~\eqref{eq:Delta-s-H-Kn}, this estimate implies that, for every $u \ge 2$,
  \[
    \Delta_u(\cH) \le 2v_H! \cdot \left(\frac{p}{L}\right)^{u-1} \cdot \frac{e(\cH)}{v(\cH)} \le (\lambda p)^{u-1} \cdot \frac{e(\cH)}{v(\cH)},
  \]
  where in the second inequality we used that $L$ is sufficiently large. By Theorems~\ref{thm:main-hypergraphs} and~\ref{thm:lower-bound}, for every $\eta \in [0, 1]$,
  \[
    (1-\eps/2) \cdot \Phi_{n,p}^H(\eta+\eps/2) - C \le  - \log \Pr\big(X \le \eta\Ex[X]\big) \le (1+\eps/2) \cdot \Phi_{n,p}^H\big((1-\eps/2)\eta\big) + C.
  \]

  Finally, we show that we may absorb the additive constant $C$ on both sides of the above inequality. To this end, we first invoke Lemma~\ref{lemma:Phi-p-cH-lower} to get the following inequality:
  \begin{equation}
    \label{eq:Phi-npH-lower}
    \Phi_{n,p}^H (1-\eps/2) \ge \frac{\eps^2}{8e_H^2} \cdot \binom{n}{s} p \ge \frac{L\eps^2}{16e_H^2s!} \ge \frac{2C}{\eps},
  \end{equation}
  where we used the assumptions that $p \ge Ln^{-1/m_s(H)} \ge Ln^{-s}$ and that $L$ is sufficiently large.  To derive the claimed the upper bound on $-\log\Pr(X \le \eta\Ex[X])$, note that, since $\eta \le 1$ and the function $\eta \mapsto \Phi_{n,p}(\eta)$ is decreasing, we have
  \[
    C \stackrel{\eqref{eq:Phi-npH-lower}}{\le} (\eps/2) \cdot \Phi_{n,p}^H\big((1-\eps/2)\eta\big)
    \qquad
    \text{and}
    \qquad
    \Phi_{n,p}^H\big((1-\eps/2)\eta\big) \le \Phi_{n,p}^H\big((1-\eps)\eta\big).
  \]
  To derive the claimed lower bound, we may assume that $\eta + \eps \le 1$, since otherwise $\Phi_{n,p}^H(\eta+\eps) = 0$.  Therefore,
  \[
    C \stackrel{\eqref{eq:Phi-npH-lower}}{\le} (\eps/2) \cdot \Phi_{n.p}^H(\eta+\eps/2)
    \quad
    \text{and}
    \qquad
    \Phi_{n.p}^H(\eta+\eps/2) \le \Phi_{n,p}^H(\eta+\eps).
  \]
  This completes the proof of Theorems~\ref{thm:graphs} and~\ref{thm:hypergraphs}.
\end{proof}
 
\begin{proof}[Proof of Theorems~\ref{thm:APs}]
  Let $k$ be a positive integer and let $\cH$ be the $k$-uniform hypergraph with vertex set $V \coloneqq \br{n}$ whose hyperedges are the $k$-term arithmetic progressions in $\br{n}$, that is,
  \[
    \cH \coloneqq \big\{\{x, x+d, \dotsc, x+(k-1)d\} : x, d \in \br{n}, x+(k-1)d \le n\big\}.
  \]
  Since every number in $\br{n}$ belongs to at most $kn$ many $k$-term arithmetic progressions and every pair of numbers belongs to at most $\binom{k}{2}$ such progressions, we have
  \[
    \Delta_1(\cH) = kn
    \qquad
    \text{and}
    \qquad
    \Delta_k(\cH) \le \dotsb \le \Delta_2(\cH) \le \binom{k}{2}.
  \]
  Moreover, since $\br{n}$ contains at least $c_kn^2$ many $k$-term progressions, for some constant $c_k > 0$, provided that $n \ge k$, we conclude that $e(\cH)\ge c_kn^2$ and hence
  \[
    \Delta_s(\cH) \le K \cdot \left(n^{-\frac{1}{k-1}}\right)^{s-1} \cdot \frac{e(\cH)}{v(\cH)}\qquad\forall s\in\{1,\dotsc,k\}
  \]
  for some constant $K$ that depends only on $k$.  Therefore, when $L n^{-1/(k-1)} \le p \le p_0$ for a sufficiently large constant $L$, we may apply Theorems~\ref{thm:main-hypergraphs} and~\ref{thm:lower-bound} to derive (with a little help from Lemma~\ref{lemma:Phi-p-cH-lower}) the claimed estimate on $-\log \Pr(X \le \eta\Ex[X])$ for every $\eta \in [0,1]$, as in the previous proof. We leave the details to the reader.
\end{proof}

\bibliographystyle{amsplain}
\bibliography{lower-tails}

\end{document}